\theoremstyle{plain}
\newtheorem{theorem}{Theorem}[section]
\newtheorem{proposition}[theorem]{Proposition}
\newtheorem{lemma}[theorem]{Lemma}
\newtheorem{corollary}[theorem]{Corollary}
\theoremstyle{definition}
\newtheorem{definition}[theorem]{Definition}
\newtheorem{remark}[theorem]{Remark}
\newtheorem{conj}[theorem]{Conjecture}
\newcommand{\R}{\mathbb{R}}
\newcommand{\e}{\varepsilon}
\newcommand{\dx}{\mathrm{d} x}
\newcommand{\dy}{\mathrm{d} y}
\newcommand{\w}{w}
\date{\today}
\title{An isoperimetric problem with two distinct solutions}
\author{Antoine Henrot}
\author{Antoine Lemenant}
\author{Ilaria Lucardesi}
\begin{document}

\begin{abstract} In this paper we prove that among all convex domains of the plane with two axis of symmetry, the  maximizer of the first non trivial Neumann eigenvalue $\mu_1$ with perimeter constraint is achieved by the square and the equilateral triangle. Part of the result follows from a new general bound on $\mu_1$ involving the minimal width over the area. Our main result partially answers to a question addressed in 2009 by R. S. Laugesen,  I. Polterovich, and B. A. Siudeja.
\end{abstract}

\maketitle

{Keywords: Neumann eigenvalues, perimeter, square, equilateral triangle}\\ 
{ MSC: Primary 35P15 Secondary: 49Q10; 52A10; 52A40}

\setcounter{tocdepth}{1}

\tableofcontents

\section{Introduction}

In this paper, we are interested in the  shape optimization of the second Neumann eigenvalue of the Laplacian, denoted by $\mu_1$, see Section \ref{section0} for the Notation. We underline that the shape optimization of the first Neumann eigenvalue $\mu_0$ is not interesting, since $\mu_0(\Omega)=0$ for every shape $\Omega$.

A well-known result, first proved by Szeg\H{o} (for plane simply-connected domains) and then generalized by Weinberger, says that the ball maximizes $\mu_1(\Omega)$ with volume constraint. The Dirichlet analogue is the Faber-Krahn inequality, which says that the ball minimizes the first Dirichlet eigenvalue of the Laplacian $\lambda_1(\Omega)$ under volume constraint.

But less is known for the perimeter constraint. Concerning the first Dirichlet  eigenvalue, it is easy to see that the ball is again the minimizer with fixed perimeter. Indeed, by successively using the isoperimetric inequality and then Faber-Krahn inequality, we get (here $\Omega$ is a bounded open set in $\R^N$ and $\omega_N$ is the volume 
of the unit ball $B_1$)
\begin{eqnarray}
P(\Omega)^{\frac{2}{N-1}}\lambda_1(\Omega) &\geq& \left(N\omega_N^{\frac{1}{N}}\right)^{\frac{2}{N-1}}  |\Omega|^{\frac{2}{N}} \lambda_1(\Omega) \notag \\
&\geq& 
\left(N\omega_N^{\frac{1}{N}}\right)^{\frac{2}{N-1}}  |B_1|^{\frac{2}{N}} \lambda_1(B_1)= P(B_1)^{\frac{2}{N-1}} \lambda_1(B_1),\notag
\end{eqnarray}
with equality for the ball in each of the above inequalities.

For the Neumann eigenvalue $\mu_1(\Omega)$, the maximizer  with perimeter constraint does not exist in general (see Proposition \ref{nonexistence1}). On the other hand, it is easy to see that a maximizer exists among convex sets (see Proposition \ref{existence}).  
However, up to our knowledge, even in dimension 2,  the convex maximizer is not known. The question was actually explicitly addressed in an Oberwolfach meeting in 2009 by R. Laugesen and I. Polterovich, \cite[Problem 9.2]{LS}, see also \cite[Open Problem 6.66]{LS2}, 
from which we can conjecture the following.

\begin{conj}\label{conj1} For all  planar convex domain   we have
\begin{eqnarray}
P^2(\Omega)\mu_1(\Omega)\leq 16\pi^2. \label{ineqConj}
\end{eqnarray}
The maximum being achieved by squares, and equilateral triangles.
\end{conj}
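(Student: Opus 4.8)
The plan is to control $P^2\mu_1$ by means of the scale-invariant ratios $Pw/|\Omega|$ and $P^2/|\Omega|$, $w=w(\Omega)$ being the minimal width, and to split convex bodies into three regimes. The central ingredient is a general upper bound, valid for every planar convex $\Omega$:
\begin{equation}\label{e:width}
\mu_1(\Omega)\,|\Omega|^2\ \le\ \pi^2\, w(\Omega)^2 .
\end{equation}
To prove \eqref{e:width}, place $\Omega$ inside a strip $\{0<y<w\}$ touching both lines and write $\Omega=\{(x,y):0<x<L,\ \phi_-(x)<y<\phi_+(x)\}$, so that the height $h:=\phi_+-\phi_-$ is concave, satisfies $0\le h\le w$ with $\max h=w$, and has $\int_0^L h\,\dx=|\Omega|$. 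Rayleigh test functions $u=f(x)$ are admissible for $\mu_1(\Omega)$ once $\int_0^L fh\,\dx=0$, and they give $\mu_1(\Omega)\le\nu_1(h)$, the first nonzero eigenvalue of $-(hf')'=\nu\,hf$ on $(0,L)$ with natural boundary conditions. Thus \eqref{e:width} is equivalent to the one-dimensional inequality $\nu_1(h)\big(\int_0^L h\big)^2\le\pi^2(\max h)^2$ for every nonnegative concave $h$, with equality only for constant $h$ (where $\nu_1=\pi^2/L^2$); this being invariant under rescaling of $h$ and of $x$, and concave weights being unimodal, one should be able to prove it by reducing the extremal problem to the one-parameter family of ``flat-topped tent'' weights and checking monotonicity there.

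Granting \eqref{e:width} one has $P^2\mu_1(\Omega)\le\pi^2\big(Pw/|\Omega|\big)^2$, so the case $Pw/|\Omega|\le4$ already yields \eqref{ineqConj}, with equality forcing both saturation of \eqref{e:width} (hence $\Omega$ a rectangle in the reduction coordinates) and $Pw/|\Omega|=4$, i.e. $\Omega$ a square. Assume now $Pw/|\Omega|>4$. If moreover $P^2/|\Omega|\le 16\pi/p^2$, where $p=j'_{1,1}\approx1.8412$ is the first positive zero of $J'_1$, then the Szeg\H{o}--Weinberger inequality $\mu_1(\Omega)|\Omega|\le\pi p^2$ gives $P^2\mu_1(\Omega)=\big(P^2/|\Omega|\big)\big(\mu_1(\Omega)|\Omega|\big)\le\big(16\pi/p^2\big)(\pi p^2)=16\pi^2$, in fact strictly, since equality in Szeg\H{o}--Weinberger forces a disk and a disk has $P^2/|\Omega|=4\pi<16\pi/p^2$ (because $p<2$).

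The genuinely open case is therefore the corner region $\mathcal R=\{\Omega\ \text{convex}:Pw/|\Omega|>4\ \text{and}\ P^2/|\Omega|>16\pi/p^2\}$. One first observes that $\mathcal R$ is essentially a neighbourhood of the family of triangles inside the convex bodies: for a triangle with longest side $d$ one has $Pw/|\Omega|=2P/d\in(4,6]$, the value $6$ being attained only by the equilateral triangle, and $P^2/|\Omega|\ge 12\sqrt3>16\pi/p^2$, so every non-degenerate triangle lies in $\mathcal R$, while sufficiently round bodies (disk, Reuleaux triangle, regular $n$-gons with $n\ge5$, moderately rounded squares) and elongated bodies lie outside; a Blaschke--Santal\'o-type analysis of the diagram of $(P,w,|\Omega|)$ should show that every member of $\mathcal R$ is Hausdorff-close, after rescaling, to a genuine triangle. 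On the triangle family \eqref{ineqConj} can be attacked directly: the explicit Neumann spectrum of the equilateral triangle of side $s$ gives $\mu_1=16\pi^2/(9s^2)$, whence $P^2\mu_1=16\pi^2$ there, and a continuous-symmetrization / monotonicity argument on the two-parameter family of triangles---for which the results of Laugesen and Siudeja on Neumann tones of triangles are the natural input---should identify the equilateral triangle as the maximizer of $P^2\mu_1$ among triangles. The plan is then to extend this over all of $\mathcal R$ by a perturbation argument: a body in $\mathcal R$ close to a non-degenerate triangle is handled by transplanting a test function from that triangle, while a body in $\mathcal R$ close to the boundary portion $\{Pw/|\Omega|=4\}$ is controlled by a slight sharpening of the one-dimensional reduction behind \eqref{e:width}, valid when $Pw/|\Omega|$ exceeds $4$ only by a little.

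The hard step---and exactly where the present paper retreats to convex domains with (at least) two axes of symmetry---is this last extension over $\mathcal R$ for \emph{non-symmetric} bodies. The one-variable reduction behind \eqref{e:width} is intrinsically lossy for bodies whose optimal eigenfunction genuinely depends on two variables, as for the equilateral triangle, whose Neumann eigenfunctions are honestly two-dimensional; symmetry is precisely what makes it possible to split the eigenfunction by parity and recover enough one-dimensionality. The natural alternative, a direct classification of maximizers, also runs into trouble: by the Hadamard formulas $\dot P=\int_{\partial\Omega}\kappa V\,\mathrm{d}s$ and $\dot\mu_1=\int_{\partial\Omega}(|\nabla_\tau u|^2-\mu_1u^2)V\,\mathrm{d}s$, a convex maximizer $\Omega^\ast$ of $P^2\mu_1$ must satisfy $2P\kappa\mu_1+P^2(|\nabla_\tau u|^2-\mu_1u^2)=0$ on every strictly convex arc of $\partial\Omega^\ast$, so one would want to rule such arcs out, conclude that $\Omega^\ast$ is a polygon, and descend by induction on the number of sides to the triangle and the square---but excluding those arcs is a rigidity statement for an overdetermined problem, of the kind that is typically very hard. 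I expect the passage across $\mathcal R$, rather than the width bound \eqref{e:width} or the Szeg\H{o}--Weinberger step, to be the main obstacle.
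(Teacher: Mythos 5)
Be aware that the statement you set out to prove is, in this paper, only a \emph{conjecture}: the authors establish it solely under the additional hypothesis of two axes of symmetry (Theorem \ref{main}), together with some special classes (regular polygons, parallelograms, centrally symmetric bodies). Your first two regimes are sound and essentially reproduce the paper's ingredients: your bound $\mu_1(\Omega)|\Omega|^2\le \pi^2 w(\Omega)^2$ is Lemma \ref{joli}, though your proof of it is only sketched --- the one-dimensional inequality $\nu_1(h)\bigl(\int h\bigr)^2\le\pi^2(\max h)^2$ for concave weights is left at ``should be able to prove'' (also, $\max h$ need not equal $w$, only $\max h\le w$), whereas the paper proves the bound directly, for \emph{any} bounded open set, with an explicit capped-sine test function and a translation to enforce zero mean, so this part of your gap is fixable; the case $P w/|\Omega|\le 4$ is Corollary \ref{corr}, and the Szeg\H{o}--Weinberger regime is the same comparison the paper uses for regular polygons and for symmetry angles $\pi/N$, $N\ge 5$.

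The genuine gap is the third regime, the ``corner region'' $\mathcal R$, and it is twofold. First, the claim that every convex body in $\mathcal R$ is, after rescaling, Hausdorff-close to a triangle rests on an unspecified ``Blaschke--Santal\'o-type analysis'' of the $(P,w,|\Omega|)$ diagram that you do not carry out and that is by no means obvious. Second, and more fundamentally, even granting that claim, ``close to a triangle plus transplant a test function'' cannot conclude, because the inequality is an \emph{equality} at the equilateral triangle: any perturbative or test-function argument must beat the sharp constant $16\pi^2$ exactly, not up to an $o(1)$ error, and must also produce an admissible (zero-mean) test function on a body with no symmetry. This is precisely where the paper needs its symmetry hypothesis: oddness in $x$ makes the convex combination $v_k=(1-k)u_1+k\hat u_1$ of eigenfunctions of two equilateral triangles automatically mean-free on $\Omega$, and even then the authors need the parametrization by $a,c,s,k$, the monotonicity Lemma \ref{lemmas}, the delicate estimates of Lemmas \ref{smalla} and \ref{estimategrad}, and a computer-assisted verification on roughly $530\,000$ grid points to control the neighbourhood of the equilateral triangle; the range $a\in[1/4,1/3]$ further requires the reverse isoperimetric result of Proposition \ref{prop-reverse}. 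Your proposal supplies no mechanism replacing these steps for non-symmetric bodies (nor does the alternative route via Hadamard's formulas, which you correctly note leads to a hard overdetermined rigidity problem), and your closing remarks concede as much. So what you have is a reasonable strategic outline whose two easy regimes coincide with the paper's partial results, but the decisive step it leaves open is exactly the step that remains open in the paper as well; the conjecture is not proved by this argument.
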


The eigenvalue $\mu_1(\Omega)$ cannot be computed explicitly in general. However, this can be done for simple domains, such as equilateral triangles, rectangles, and disks (see, e.g., \cite[Section 1.2.5]{livre_vert} and \cite{LS}). In particular, the equality in \eqref{ineqConj} for squares and equilateral triangles follows by a direct computation. Moreover, in \cite{LS} it was proved that among all triangles, the equilateral triangle is the maximizer. For the unit disk $\mathbb D$ (actually, for any disk, by scale invariance of the functional under study) we have  $P^2(\mathbb{D})\mu_1(\mathbb{D})=4\pi^2(j'_{1,1})^2 \simeq (4\times 3,39)\pi^2$ which is far from achieving the bound $16\pi^2$. This gap is large enough to imply that all regular polygons with a number of sides $N\geq 5$ will satisfy \eqref{ineqConj}, by comparison with a ball (see Proposition \ref{polygon}). Let us also mention that the conjecture is supported by numerical simulations, performed by Beniamin Bogosel (private communication). \\

In this paper we partially prove the conjecture. Our main result is the following.
\begin{theorem}\label{main} For all  planar convex domains with two axis of symmetry,
$$
P^2(\Omega)\mu_1(\Omega)\leq 16\pi^2.
$$
The maximum is achieved by squares and equilateral triangles and only by them.
 \end{theorem}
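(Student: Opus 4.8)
The plan is to reduce the problem, via the two axes of symmetry, to a one‑parameter family of convex bodies and then split the analysis into two regimes governed by the shape's eccentricity. First I would set up coordinates so that $\Omega$ is symmetric with respect to both the $x$‑ and $y$‑axes; write $w(\Omega)$ for the minimal width and $|\Omega|$ for the area. The key input announced in the abstract and introduction is a general upper bound for $\mu_1$ in terms of $w/|\Omega|$ (the ``new general bound on $\mu_1$ involving the minimal width over the area''); I would use it in the form $\mu_1(\Omega)\le C\, w(\Omega)/|\Omega|$ for an explicit constant $C$. Combining this with the isoperimetric‑type inequality $P(\Omega)^2\le c\,|\Omega|/w(\Omega)$ valid for convex sets of small width (i.e.\ when $\Omega$ is ``thin''), the product $P^2\mu_1$ is controlled by an absolute constant strictly below $16\pi^2$. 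This disposes of all sufficiently elongated shapes.

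For the complementary regime — convex bodies with two axes of symmetry that are ``balanced'' (width bounded below in terms of diameter) — I would use the symmetry to get a two‑sided test function for the Rayleigh quotient. Concretely, the first nontrivial Neumann eigenfunction on a doubly symmetric domain may be chosen odd in one variable, so one can take $u(x,y)=x$ (or its $L^2$‑orthogonalized version) as a trial function; this yields $\mu_1(\Omega)\le |\Omega|/\int_\Omega x^2\,dx$, i.e.\ $\mu_1(\Omega)$ is bounded by the reciprocal of a normalized moment of inertia. Then the quantity to maximize becomes $P(\Omega)^2 |\Omega| \big/ \int_\Omega x^2\,dx$ over doubly symmetric convex bodies, and by the symmetry one may also test with $y$, so one gets the symmetrized bound $\mu_1 \le 2|\Omega|/\int_\Omega (x^2+y^2)\,dx$. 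At this point a careful optimization — likely isolating the extremal shapes by a variational/first‑variation argument within the constrained class — should show that the maximum of the resulting geometric functional over doubly symmetric convex sets is attained exactly at the square and the equilateral triangle, with value $16\pi^2$, and the equality analysis pins down these two shapes uniquely.

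The main obstacle, I expect, is the ``balanced'' regime: the test‑function bound $\mu_1\le 2|\Omega|/\int_\Omega(x^2+y^2)\,dx$ is not tight for all shapes, so one must either interpolate between the two regimes so that the cruder ``thin'' estimate takes over before the moment estimate loses sharpness, or refine the moment estimate (e.g.\ by choosing the trial function adapted to the symmetry group — using the full dihedral symmetry when $\Omega$ is a square or a triangle). Making the two regimes overlap with a clean threshold on $w/\diam$ (or on the isoperimetric deficit) and verifying that both bounds stay $<16\pi^2$ off the two extremal shapes, while meeting exactly $16\pi^2$ on them, is the delicate quantitative heart of the argument; the equality discussion (ruling out all shapes other than the square and equilateral triangle) will require the rigidity case of whichever geometric inequality is invoked at the boundary between the regimes.
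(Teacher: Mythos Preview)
Your plan has a structural gap at the very first step: you set up coordinates so that $\Omega$ is symmetric in both the $x$- and $y$-axes, i.e.\ you are tacitly assuming the two axes are \emph{orthogonal}. But the theorem is stated for two axes at an arbitrary angle, and one of the two claimed maximizers --- the equilateral triangle --- has axes at angle $\pi/3$, not $\pi/2$. So your framework cannot even see the triangle; every argument you propose (trial functions odd in $x$, moments $\int x^2$, $\int(x^2+y^2)$) presupposes the orthogonal symmetry and simply does not apply to the $\pi/3$ case. In the paper this is handled by first reducing (Proposition~\ref{axes}) the possible angles to $\pi/N$, $N\ge 3$. The orthogonal case ($N$ even, in particular $N=2,4$) is then indeed dispatched quickly along lines close to what you outline, via Lemma~\ref{joli} and the geometric inequality $P\,w\le 4|\Omega|$ (Proposition~\ref{orthogonal}). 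The cases $N\ge 5$ fall to Szeg\H{o}--Weinberger plus a reverse isoperimetric bound. But $N=3$ is the genuinely hard case and occupies the bulk of the paper (Section~\ref{sectionIII}), requiring explicit test functions built from the eigenfunctions of two reference equilateral triangles and ultimately a computer-assisted check on a fine grid; nothing in your proposal addresses this.

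There are also two smaller issues. First, the width--area bound from the paper is $\mu_1(\Omega)\le \pi^2 w(\Omega)^2/|\Omega|^2$, not $C\,w/|\Omega|$ as you wrote; the companion geometric inequality is $P\,w\le 4|\Omega|$, not $P^2\le c\,|\Omega|/w$, and the two combine \emph{exactly} to $16\pi^2$ (with equality only for squares). Second, the trial function $u=x$ is far from sharp (for the unit square it gives $\mu_1\le 12$ versus the true value $\pi^2$), so even in the orthogonal case your ``balanced regime'' bound $P^2|\Omega|/\int x^2$ will not meet $16\pi^2$ with equality at the square, and the equality analysis you sketch cannot go through with that test function.
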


One of the key ingredients in the proof of Theorem \ref{main} is a new geometrical bound on $\mu_1(\Omega)$, valid for any bounded Lipschitz open set $\Omega \subset \R^2$ (see also Lemma \ref{boundmu1} for a general statement on any set, not necessarily Lipschitz). Besides the application to our problem, the result is interesting in itself.

\begin{lemma}\label{joli} For all planar bounded (Lipschitz) open set $\Omega$ we have
\begin{equation}\label{jolieq}
\mu_1(\Omega)\leq  \pi^2\frac{w(\Omega)^2}{|\Omega|^2}
\end{equation}
where $|\Omega|$ is the area of $\Omega$ and $w(\Omega)$ is the minimal width of $\Omega$. The inequality becomes an equality for all  and only rectangles.
\end{lemma}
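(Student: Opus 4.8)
The plan is to use the Rayleigh quotient characterization of $\mu_1$ and to exhibit a single admissible test function whose Rayleigh quotient is at most $\pi^2 w(\Omega)^2/|\Omega|^2$. First I would dispose of the disconnected case, where a locally constant nonconstant competitor gives $\mu_1(\Omega)=0$ and the bound is trivial; so assume $\Omega$ connected. Choose coordinates $(x,y)$ so that a direction realizing the minimal width is the $y$‑axis; after a translation $\Omega\subset\R\times[0,w]$ with $w:=w(\Omega)$, and let $h(x):=\mathcal L^1\{y:(x,y)\in\Omega\}$ be the length of the vertical slice over $x$, so that $0\le h\le w$ pointwise, $h$ is measurable, and $\int_\R h\,\dx=|\Omega|$. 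The idea is to compose the first Neumann eigenfunction of the interval $[0,|\Omega|]$ with the ``accumulated area'' map $\phi(x):=\int_{x_-}^x h(s)\,\mathrm{d}s$, where $[x_-,x_+]$ is the $x$‑range of $\Omega$: I set
$$u(x,y):=\cos\!\Big(\frac{\pi\,\phi(x)}{|\Omega|}\Big).$$
Since $0\le h\le w$, the map $\phi$ is $w$‑Lipschitz and non‑decreasing, with $\phi(x_-)=0$, $\phi(x_+)=|\Omega|$ and $\phi'=h$ a.e.; hence $u$ is Lipschitz on $\Omega$, so $u\in H^1(\Omega)$.

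Next I would run the change of variables $\int_{x_-}^{x_+}g(\phi(x))\,h(x)\,\dx=\int_0^{|\Omega|}g(t)\,\mathrm{d}t$, legitimate for a monotone Lipschitz $\phi$. With $g\equiv 1$ and $g=\cos^2$ this gives $\int_\Omega u\,\dx=0$ (so $u$ is admissible) and $\int_\Omega u^2\,\dx=|\Omega|/2$; while $\nabla u=\big(-\tfrac{\pi}{|\Omega|}\sin(\tfrac{\pi\phi(x)}{|\Omega|})\,h(x),\,0\big)$ a.e., so, using only the pointwise bound $h^3\le w^2h$,
$$\int_\Omega|\nabla u|^2\,\dx=\frac{\pi^2}{|\Omega|^2}\int_{x_-}^{x_+}\sin^2\!\Big(\frac{\pi\phi(x)}{|\Omega|}\Big)h(x)^3\,\dx\ \le\ \frac{\pi^2 w^2}{|\Omega|^2}\int_{x_-}^{x_+}\sin^2\!\Big(\frac{\pi\phi(x)}{|\Omega|}\Big)h(x)\,\dx=\frac{\pi^2 w^2}{|\Omega|^2}\cdot\frac{|\Omega|}{2}.$$
Dividing the last two displays yields $\mu_1(\Omega)\le\pi^2 w(\Omega)^2/|\Omega|^2$, that is \eqref{jolieq}. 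Note this uses neither convexity nor smoothness, only $\Omega\subset\R\times[0,w]$, so it applies to an arbitrary bounded open set.

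For the equality statement, I would first check that a rectangle $(0,L)\times(0,w)$ with $L\ge w$ has $\mu_1=\pi^2/L^2=\pi^2w^2/(Lw)^2$, so \eqref{jolieq} is an equality (and, consistently, the test function $u$ above is then exactly the first eigenfunction). Conversely, if equality holds then the right-hand side is positive, so $\Omega$ is connected, and the inequality $h^3\le w^2h$ used above is saturated, i.e.\ $\sin^2(\pi\phi(x)/|\Omega|)\,h(x)\,(w^2-h(x)^2)=0$ for a.e.\ $x$. Since $\sin^2(\pi\phi/|\Omega|)$ vanishes only on the two intervals $\{\phi=0\}$ and $\{\phi=|\Omega|\}$, on which $\phi'=h=0$ a.e.\ anyway, this forces $h(x)\in\{0,w\}$ for a.e.\ $x\in(x_-,x_+)$, i.e.\ almost every vertical slice of $\Omega$ is either empty or all of $(0,w)$. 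I would then upgrade this to $\Omega=(x_-,x_+)\times(0,w)$ using that $\Omega$ is open, connected and — crucially — Lipschitz.

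This last step is where I expect the only genuine subtlety. The measure-theoretic conclusion ``every slice is empty or full'' does not by itself give a rectangle: a rectangle with an interior segment removed still has every slice full and still yields equality in \eqref{jolieq}, but is not a rectangle — and is not Lipschitz. So one must really invoke the boundary regularity (via the lower semicontinuity of $h$ and the local graph structure of $\partial\Omega$) to rule out such cracked or multi-strip configurations and pin down a single full-height rectangle. Everything else — measurability of $h$, the change of variables, the chain rule for the Lipschitz composition $u$, and admissibility of $u$ in $H^1(\Omega)$ — is routine.
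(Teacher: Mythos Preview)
Your proof is correct and proceeds by a genuinely different mechanism than the paper's. The paper fixes the first Neumann eigenfunction of an auxiliary rectangle $R=[-L/2,L/2]\times[-w/2,w/2]$ with $L=|\Omega|/w$, extended by $\pm1$ in the strip outside $R$; it then \emph{translates $\Omega$ horizontally} until this fixed function has zero mean on it, and bounds the numerator by $\int_R|\nabla u|^2$ and the denominator by $\int_R u^2$ using $|u|\le1$ together with $|\Omega\setminus R|=|R\setminus\Omega|$. You instead \emph{tailor} the test function to $\Omega$ by composing the one-dimensional eigenfunction with the accumulated-area map $\phi$, so that zero mean and $\int_\Omega u^2=|\Omega|/2$ hold exactly by change of variables, and the single pointwise inequality $h^3\le w^2h$ does all the work. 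Your route avoids the intermediate-value translation trick and isolates the equality condition $h\in\{0,w\}$ very cleanly; the paper's route is more geometric and extends verbatim to $\R^N$ by replacing the rectangle with a cuboid. Both arguments leave the same residual step in the equality case---the paper obtains $|\Omega\triangle R|=0$ and asserts the conclusion in one line---and your remark that the Lipschitz hypothesis is precisely what rules out slit or multi-strip configurations is well taken.
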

The proof of this lemma is quite short and it is given in Section \ref{sectionI} (together with a generalization in higher dimension, see Lemma \ref{boundmu1} and
Lemma \ref{boundmu1N}). It allows us to conclude that Conjecture \ref{conj1} holds true for all planar convex domains with two orthogonal axis of symmetry. Actually, from Lemma \ref{joli} one can state the following immediate corollary.
\begin{corollary} \label{corr}  Any planar convex domain $\Omega$ satisfying 
\begin{eqnarray}
 P(\Omega) w(\Omega)\leq 4 |\Omega|, \label{inequG}
 \end{eqnarray}
also satisfies  inequality \eqref{ineqConj}.
\end{corollary}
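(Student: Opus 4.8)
The plan is to simply combine Lemma \ref{joli} with the hypothesis \eqref{inequG}; since Lemma \ref{joli} is already available, essentially no work remains. First I would apply \eqref{jolieq} to the convex domain $\Omega$, which is in particular a bounded Lipschitz open set, obtaining
\[
\mu_1(\Omega)\leq \pi^2\frac{w(\Omega)^2}{|\Omega|^2}.
\]
Multiplying through by the positive quantity $P(\Omega)^2$ yields
\[
P(\Omega)^2\mu_1(\Omega)\leq \pi^2\left(\frac{P(\Omega)\,w(\Omega)}{|\Omega|}\right)^2,
\]
so the whole problem is reduced to controlling the scale-invariant ratio $P(\Omega)w(\Omega)/|\Omega|$.

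Next I would invoke the assumed inequality \eqref{inequG}, namely $P(\Omega)w(\Omega)\leq 4|\Omega|$, which gives $P(\Omega)w(\Omega)/|\Omega|\leq 4$. Substituting this into the previous display produces $P(\Omega)^2\mu_1(\Omega)\leq 16\pi^2$, which is exactly \eqref{ineqConj}. There is no genuine obstacle here: all the analytic content has been packaged into Lemma \ref{joli}, and the corollary is a purely algebraic rearrangement. The only point worth a remark is that $w(\Omega)$ and $|\Omega|$ are automatically finite and positive for a (bounded) convex domain, so no degeneracy can occur; one might also note in passing, for context, that for the square $P w = 4|\Omega|$ holds with equality, consistently with the fact that equality in \eqref{ineqConj} is expected there.
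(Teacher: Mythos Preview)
Your proof is correct and is exactly the intended argument: the paper itself calls the corollary ``immediate'' from Lemma~\ref{joli} and does not spell out more than you did. The combination of \eqref{jolieq} with the hypothesis \eqref{inequG} via the algebraic rearrangement $P^2\mu_1\leq \pi^2(P w/|\Omega|)^2\leq 16\pi^2$ is precisely what is meant.
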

As a matter of fact, any convex domain with two orthogonal axis of symmetry automatically satisfies \eqref{inequG} as it is proved in Proposition \ref{orthogonal}, which therefore answers to the conjecture for the case of convex domains with two orthogonal axis of symmetry, with a quite short proof. Moreover, among convex domains with two orthogonal axis of symmetry, only squares satisfy \eqref{jolieq} and \eqref{inequG}, and thus \eqref{ineqConj}, with an equality sign.
It is worth noticing that there exist other convex domains satisfying \eqref{inequG}, and thus Conjecture \ref{conj1}, without having necessarily two axis of symmetry: this is the case, e.g., of parallelograms (see Proposition~\ref{parallelogram}) and centrally symmetric convex domains (see Remark \ref{centrally}). Our study of parallelograms contains and generalizes the result by Raiko \cite{raiko}, with a completely different proof.

In order to obtain the main result (Theorem \ref{main}) in the generic case, we consider a domain $\Omega$ with two axis of symmetry with a given angle $\theta \in (0,2\pi)$. We rapidly arrive to the conclusion that it suffices to treat angles of the form $\theta=\pi/N$ with $N\geq 3$ (see Proposition \ref{axes}). Notice that when $\theta=\pi/4$, or more generically $\theta=\pi/N$ with $N=2^k$, $k\geq 2$, then the domain has two orthogonal axis of symmetry, and the argument above applies.

The case of $\theta=\pi/N$ with $N\geq 5$ is treated in Section \ref{sec-small}: for such domains, the comparison with the disk, which in general gives a too rough estimate, gives the desired bound with the strict inequality sign. 

The last case that we need to consider is that of an angle $\theta=\pi/3$, i.e., when $\Omega$ admits the same axis of symmetry as the equilateral triangle. This case is much 
more involved and it is treated in Section \ref{sectionIII}. We have to introduce a parameter $a\in [0, 1/3]$, which, roughly speaking, describes how far the domain is from an equilateral triangle.

The complicated situation occurs for small values of the parameter, $a\in [0,1/4]$, corresponding to shapes not far from the equilateral triangle. Here the strategy is to use a convex combination of the eigenfunctions of two suitable equilateral triangles as a test function for $\mu_1(\Omega)$.  After a series of hard computations and estimates, we come to the conclusion that nothing but a finite number of configurations have to be tested, which is finally done by computer.  The most technical parts are postponed to the Appendix.

On the other hand, for the remaining intermediate values of the parameter, namely $a\in [1/4, 1/3]$, we can easily conclude by using Szeg\H{o}-Weinberger's inequality together with a 
nice reverse isoperimetric inequality (involving the perimeter and the area). This concludes the proof of Theorem~\ref{main}.

\subsection*{Acknowledgements} The authors want to thank Beniamin Bogosel, Lorenzo Cavallina, Andrea Colesanti, Kei Funano, Richard Laugesen, and Shigeru Sakaguchi for the stimulating discussions. This work was partially supported by the project \emph{Shape Optimization (SHAPO)} n. ANR-18-CE40-0013 financed by the French \emph{Agence Nationale de la Recherche (ANR)}, and by the Lorraine and Tohoku Universities through a joint research project. IL is member of the Italian research group \emph{Gruppo Nazionale per l'Analisi Matematica, la Probabilit\`{a} e le loro Applicazioni (GNAMPA)} of the \emph{Istituto Nazionale di Alta Matematica (INdAM)}.

\section{Notation and useful bounds}\label{section0}

We start this section by fixing the notation that will be used throughout the paper. Then we give some known bounds of $P(\Omega)$ and $\mu_1(\Omega)$  coming from the literature that will be used later. As a direct application of one of these bounds (Szeg\H{o}-Weinberger's inequality) we give a short proof of the validity of Conjecture \ref{conj1} for regular polygons.

\subsection*{Notation.} 

We denote by $Vect[\xi_1, \ldots, \xi_{m}]$, $m\in \mathbb N$, the vector space of $\mathbb R^N$ generated by the vectors $\xi_1, \ldots, \xi_m \in \mathbb R^N$. The orthogonal complement of a subspace $V$ of  $\mathbb R^N$ is denoted by $V^\perp$.

Given three points $A,B,C$ in the plane, we denote by $\overline{AB}$ the length of the segment joining $A$ and $B$, by $(AB)$ the line passing through $A$ and $B$ (when they do not coincide), and by $\widehat{ABC}$ the angle of vectors between $\overset{\longrightarrow}{AB}$ and $\overset{\longrightarrow}{BC}$.

Given a set $\Omega$, we denote its area by $|\Omega|$, its perimeter (always defined when $\Omega$ is convex) by $P(\Omega)$, its diameter by $D(\Omega)$, and its convex hull by $\mathrm{conv}(\Omega)$. We denote by $\mathcal{H}^1$ the one dimensional Hausdorff measure,  $\mathbb{S}^1$ the unit circle, and $\mathbb{D}$ the unit disk. The minimal width of the convex set $\Omega \subset \R^2$ (or more generally of any bounded open set)
is defined by 
$$
w(\Omega):=\min_{\nu \in \mathbb{S}^1} \mathcal{H}^1(p_\nu(\Omega)),$$
where $p_\nu : \R^2 \to \R \nu$ is the orthogonal projection onto the vectorial line $\R \nu$ oriented by the vector $\nu$. 

If $\Omega$ is a bounded Lipschitz domain (as for instance a convex domain), the spectrum of the Neumann Laplacian is discrete, and the first Neumann eigenvalue is always zero. We will denote by $\mu_1(\Omega)$ the second eigenvalue, defined by 
\begin{equation}\label{mu1}
\mu_1(\Omega)=\min_{u \in H^1(\Omega) \; : \; \int_{\Omega} u \;\dx=0} \frac{\int_{\Omega} |\nabla u|^2 \; \dx}{\int_{\Omega} u^2 \;\dx}.
\end{equation}

\subsection*{Some useful bounds.}
The first inequality that we mention is the following: for all  convex domains (with non empty interior) 
\begin{eqnarray}
P(\Omega)> 2D(\Omega). \label{inclusion}
\end{eqnarray}

The second inequality says that among all domains the ball maximizes $\mu_1$ with fixed volume. More precisely (see \cite{W56} or  \cite[7.1.2]{livre_vert}), for any Lipschitz domain $\Omega \subset \R^2$ we have
\begin{eqnarray}
|\Omega|\mu_1(\Omega)\leq \pi \mu_1(\mathbb{D})=\pi(j'_{1,1})^2, \quad\quad \text{ (Szeg\H{o}-Weinberger)} \label{SzegoW}
\end{eqnarray}
where $j'_{1,1}$ is the first zero of the derivative of the Bessel function $J_1$. In particular, $(j'_{1,1})^2\simeq 3.39$ (see \cite[page 11]{livre_vert}).

The third inequality is a classical lower bound for $\mu_1$, valid for any convex domain $\Omega\subset \R^2$ (see \cite{PW}),
\begin{eqnarray}
D(\Omega)^2 \mu_1(\Omega)\geq \pi^2. \quad\quad \text{ (Payne-Weinberger)} \label{PW}
\end{eqnarray}

The last inequality  that we mention gives a counterpart of the above inequality, with an upper bound. It was explicitly stated by Ba\~{n}uelos and Burdzy in \cite[Corollary 2.1]{BB}, but actually it follows from a more general result by Cheng \cite{cheng}: for any convex domain $\Omega \subset \R^2$,  
 
 \begin{eqnarray}
D(\Omega)^2 \mu_1(\Omega)\leq 4j_{0,1}^2. \quad\quad \text{ (Cheng, Ba\~{n}uelos-Burdzy)} \label{cheng}
\end{eqnarray}
Here $j_{0,1}$ is the first zero of the Bessel function $J_0$, and it holds $j_{0,1}\simeq 2.405$ (see \cite[page 11]{livre_vert}).

\subsection*{The case of regular polygons}\label{polygons}
As any regular polygon admits at least two axis of symmetry, we can treat them as a direct consequence of our main result (Theorem~\ref{main}). However, Szeg\H{o}-Weinberger's inequality allows us to give an independent and shorter proof.
\begin{proposition} \label{polygon}For any regular polygon $\Omega \subset \R^2$ inequality \eqref{ineqConj} holds true. 
\end{proposition}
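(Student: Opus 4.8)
The plan is to combine the Szeg\H{o}-Weinberger inequality \eqref{SzegoW} with the classical isoperimetric inequality for polygons, and then check that the resulting bound is strictly below $16\pi^2$ for every regular polygon with $N\geq 5$ sides, leaving only the cases $N=3$ (equilateral triangle) and $N=4$ (square), which are known to satisfy \eqref{ineqConj} with equality by direct computation.

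More precisely, let $\Omega$ be a regular polygon with $N\geq 3$ sides. By the isoperimetric inequality among $N$-gons, the regular $N$-gon is the one minimizing $P^2/|\Omega|$, but here I want the reverse direction: I use the explicit relation for the \emph{regular} $N$-gon itself. If $r$ denotes the inradius, then $|\Omega| = N r^2 \tan(\pi/N)$ and $P(\Omega) = 2 N r \tan(\pi/N)$, whence
\begin{equation}\label{eq:polygon-iso}
\frac{P(\Omega)^2}{|\Omega|} = 4 N \tan\!\left(\frac{\pi}{N}\right).
\end{equation}
Combining this with Szeg\H{o}-Weinberger \eqref{SzegoW} in the form $|\Omega|\mu_1(\Omega)\leq \pi (j'_{1,1})^2$ gives
\begin{equation*}
P(\Omega)^2 \mu_1(\Omega) = \frac{P(\Omega)^2}{|\Omega|}\,|\Omega|\,\mu_1(\Omega) \leq 4 N \tan\!\left(\frac{\pi}{N}\right)\cdot \pi (j'_{1,1})^2.
\end{equation*}
So it suffices to show that $4 N \tan(\pi/N)\, \pi (j'_{1,1})^2 \leq 16\pi^2$, i.e. that $N\tan(\pi/N) \leq 4\pi/(j'_{1,1})^2$, for all $N\geq 5$.

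Since $N\tan(\pi/N)$ is decreasing in $N$ (its value tends to $\pi$ as $N\to\infty$), it is enough to verify the inequality for $N=5$: one computes $5\tan(\pi/5)\simeq 3.633$, while $4\pi/(j'_{1,1})^2 \simeq 4\pi/3.39 \simeq 3.707$, so the inequality holds, and in fact strictly. This settles all $N\geq 5$ with a strict sign. For $N=3$ and $N=4$ the quantities $P^2\mu_1$ are computed explicitly (e.g. using the known eigenvalues of the equilateral triangle and the square recalled in the introduction) and equal $16\pi^2$ in both cases, which gives \eqref{ineqConj} with equality. The only mild subtlety is making the monotonicity statement for $N\tan(\pi/N)$ precise and checking the borderline numerical value at $N=5$ carefully, since the gap there ($3.633$ versus $3.707$) is the tightest; all the other steps are immediate.
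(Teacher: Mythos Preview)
Your proof is correct and follows essentially the same approach as the paper: combine Szeg\H{o}--Weinberger with the exact formula $P^2/|\Omega|=4N\tan(\pi/N)$ for the regular $N$-gon, reduce to $N\tan(\pi/N)\leq 4\pi/(j'_{1,1})^2$, use monotonicity in $N$ to check only $N=5$, and handle $N=3,4$ by the known explicit values. The paper additionally supplies the short derivative argument for the monotonicity of $x\mapsto x\tan(\pi/x)$ on $[5,\infty)$, which you left as an assertion.
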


\begin{proof} Let $\Omega$ be a regular polygon with $N$ sides. We denote by $A$ its area and $P$ its perimeter. Then a simple computation shows that
$$
P=2N\sin\left(\frac{\pi}{N}\right), \quad A=N\sin\left(\frac{\pi}{N}\right)\cos\left(\frac{\pi}{N}\right).
$$
Next, by Szeg\H{o}-Weinberger's inequality \eqref{SzegoW}, we know that 
$$
A\mu_1(\Omega)\leq \pi \mu_1(\mathbb{D}).
$$
It follows that  \eqref{ineqConj} will be true if $P^2\mu_1(\mathbb{D})\leq 16 A \pi$ or, equivalently, if
$$
\mu_1(\mathbb{D})4 N^2 \sin^2\left(\frac{\pi}{N}\right) \leq 16N\sin\left(\frac{\pi}{N}\right)\cos\left(\frac{\pi}{N}\right)\pi.
$$
Since  $\cos\left(\frac{\pi}{N}\right)$ never vanishes for $N\geq 3$, we can transform the relation into
 $$
 N \tan\left(\frac{\pi}{N}\right) \leq \frac{4\pi}{\mu_1(\mathbb{D})} .
$$
Now $\mu_1(\mathbb{D})\simeq 3.39$ thus $\frac{4\pi}{\mu_1(\mathbb{D})}>3.706$. On the other hand, the function $x\mapsto  x \tan\left(\frac{\pi}{x}\right)$ is decreasing on $[5,+\infty[$ and 
$$
5 \tan\left(\frac{\pi}{5}\right)<3.633.
$$
This means that \eqref{ineqConj} is satisfied for every polygon with $N\geq 5$. For $N=3,4$, as we already know, the desired inequality is an equality.
\end{proof}

\section{Existence and non existence results}\label{sectionexis}

In this section we prove the existence of a convex maximizer for the quantity $P^2(\Omega)\mu_1(\Omega)$. Then we stress the non existence for the analogue (again among convex sets) minimizing problem and for the maximizing problem without convexity constraint.

\begin{proposition}\label{existence} The problem
$$\max\{P^2(\Omega)\mu_1(\Omega) \quad : \quad \Omega \subset \R^2 \text{, convex }\},$$
admits a solution.
\end{proposition}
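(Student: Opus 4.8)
The plan is the direct method in the calculus of variations, combined with the Blaschke selection theorem. Note first that $\Omega\mapsto P^2(\Omega)\mu_1(\Omega)$ is invariant under homotheties and rigid motions, and that it is finite on convex bodies: by Cauchy's formula $P(\Omega)\le\pi D(\Omega)$, so Cheng's inequality \eqref{cheng} gives $P^2(\Omega)\mu_1(\Omega)\le 4\pi^2 j_{0,1}^2$ for every planar convex body. Thus $M:=\sup\{P^2(\Omega)\mu_1(\Omega):\Omega\subset\R^2\text{ convex}\}$ is finite, and $M\ge 16\pi^2$, since for the unit square $P=4$ and $\mu_1=\pi^2$, whence $P^2\mu_1=16\pi^2$.

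First I would take a maximizing sequence $(\Omega_n)$ and, using the invariances, normalize so that $D(\Omega_n)=1$ and $\Omega_n\subset\overline{B_1(0)}$. By the Blaschke selection theorem, a subsequence (not relabeled) converges in the Hausdorff distance to a nonempty compact convex set $K$; since the diameter is continuous for this convergence, $D(K)=1$, so $K$ is not a point.

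The heart of the matter is to rule out that $K$ is a segment. If it were, it would have length $1$; then $\Omega_n$ is contained in the $\delta_n$-neighborhood of $K$, with $\delta_n:=d_H(\Omega_n,K)\to 0$, which is a stadium-shaped convex set of perimeter $2+2\pi\delta_n$, so by monotonicity of the perimeter under inclusion of convex sets $P(\Omega_n)\le 2+2\pi\delta_n$; on the other hand $P(\Omega_n)>2D(\Omega_n)=2$ by \eqref{inclusion}. Hence $P(\Omega_n)\to 2$, and combining with Cheng's inequality in the form $\mu_1(\Omega_n)\le 4j_{0,1}^2/D(\Omega_n)^2=4j_{0,1}^2$ we would get
$$\limsup_{n\to\infty}P^2(\Omega_n)\mu_1(\Omega_n)\le 16\,j_{0,1}^2<16\pi^2\le M,$$
contradicting $P^2(\Omega_n)\mu_1(\Omega_n)\to M$. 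Therefore $K$ has nonempty interior.

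It then remains to pass to the limit. Since $\Omega_n\to K$ in the Hausdorff distance and $K$ is a genuine convex body, the perimeter is continuous, $P(\Omega_n)\to P(K)$, and so is the second Neumann eigenvalue, $\mu_1(\Omega_n)\to\mu_1(K)$; the latter is the classical continuity of Neumann eigenvalues along Hausdorff-convergent sequences of convex sets with non-degenerate limit (one can, for instance, sandwich $\Omega_n-x_n$ between $(1-\e)(K-x_K)$ and $(1+\e)(K-x_K)$ for $n$ large and use the uniform cone condition enjoyed by convex sets). Hence $P^2(K)\mu_1(K)=M$, i.e. $K$ is a maximizer. I expect the only genuine obstacle to be the exclusion of a degenerating limit: the point is that Cheng's bound, together with the fact that the perimeter of a collapsing convex set tends to twice its diameter, forces the functional of such a sequence below $16j_{0,1}^2$, which is strictly below the value $16\pi^2$ already realized by a non-degenerate competitor.
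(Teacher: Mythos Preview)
Your proof is correct and follows essentially the same approach as the paper: normalize a maximizing sequence by diameter, apply Blaschke selection, rule out degeneration to a segment via Cheng's inequality \eqref{cheng} together with $P(\Omega_n)\to 2D(\Omega_n)$, and conclude by continuity of $P$ and $\mu_1$ under Hausdorff convergence of convex bodies. You supply a bit more detail (the stadium neighborhood to justify $P(\Omega_n)\to 2$, and an explicit a priori bound via Cauchy's formula), but the structure and the key ideas are the same.
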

\begin{proof}Let $\{\Omega_n\}_{n\in \mathbb N}$ be a maximizing sequence (of plane convex domains). By the scale invariance, we can assume that all these domains have diameter equal to 1, so that they are all contained into a fixed ball. By Blaschke selection theorem, only two situations can occur:
\begin{enumerate}
\item either there exists a convex  set $\Omega^*$ with non empty interior such that $\Omega_n$ (or a subsequence) converges to $\Omega^*$  for the  complementary Hausdorff distance, as $n\to \infty$. In that case, as the quantities $P(\Omega_n)$ and $\mu_1(\Omega_n)$  are continuous for the Hausdorff convergence in the class of convex domains, the set $\Omega^*$ will be the maximizer (for the continuity of $P(\Omega_n)$ see \cite[Lemma 2.3.5]{BcB} and for the continuity of $\mu_1(\Omega_n)$ see \cite{Henrot-Pierre} or \cite{henrot-ftouhi}); 
\item or the sequence $\Omega_n$ converges (for the complementary Hausdorff distance) to a segment of diameter 1, as $n\to \infty$. But in that case, the perimeter of $\Omega_n$ converges to $2 = 2D(\Omega_n)$, thus 
$$\lim_n P^2(\Omega_n )\mu_1 (\Omega_n ) = 4\lim_n D^2(\Omega_n )\mu_1(\Omega_n ) \leq 16 (j_{0,1})^2$$
by Cheng inequality \eqref{cheng}, thus we would have  
$${\rm sup}P^2(\Omega)\mu_1(\Omega) \leq  16 (j_{0,1})^2 < 16\pi^2$$
 that is impossible.
\end{enumerate}
The proof is therefore completed.
\end{proof}

Regarding to the minimization of $P^2(\Omega) \mu_1(\Omega)$, we notice that a minimizer does not exist as stated in the following proposition.

\begin{proposition} For all planar convex domains  we have 
\begin{eqnarray}
P^2(\Omega) \mu_1(\Omega) {>} 4 \pi^2. \label{lowerB}
\end{eqnarray}
 The lower bound is optimal and it is reached asymptotically  by a sequence of rectangles shrinking to a segment.
\end{proposition}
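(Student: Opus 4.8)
The plan is to establish the lower bound $P^2(\Omega)\mu_1(\Omega) > 4\pi^2$ by combining the Payne--Weinberger inequality \eqref{PW} with the elementary geometric inequality \eqref{inclusion}. Indeed, \eqref{PW} gives $D(\Omega)^2\mu_1(\Omega) \geq \pi^2$ for any planar convex domain, and \eqref{inclusion} gives $P(\Omega) > 2D(\Omega)$, hence $P(\Omega)^2 > 4D(\Omega)^2$. Multiplying, one obtains
\[
P(\Omega)^2\mu_1(\Omega) > 4D(\Omega)^2\mu_1(\Omega) \geq 4\pi^2,
\]
which is precisely \eqref{lowerB}. There is no real obstacle here: both ingredients are quoted in the preceding section, and the strict inequality is inherited from the strict inequality in \eqref{inclusion} (valid for every convex domain with nonempty interior). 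One should note that the inequality is strict even though \eqref{PW} is not, because \eqref{inclusion} already is.

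It remains to prove that the bound $4\pi^2$ is optimal and is attained asymptotically by a sequence of rectangles shrinking to a segment. First I would fix the rectangle $R_\e = (0,1)\times(0,\e)$ with $\e \to 0^+$. For such a rectangle the Neumann spectrum is explicitly computable: the eigenfunctions are products $\cos(k\pi x)\cos(m\pi y/\e)$ with eigenvalues $k^2\pi^2 + m^2\pi^2/\e^2$, so for $\e < 1$ the first nontrivial eigenvalue is $\mu_1(R_\e) = \pi^2$ (taking $k=1$, $m=0$). Meanwhile $P(R_\e) = 2(1+\e) \to 2$. Therefore
\[
P(R_\e)^2\mu_1(R_\e) = 4(1+\e)^2\pi^2 \longrightarrow 4\pi^2 \quad \text{as } \e \to 0^+,
\]
which shows the lower bound cannot be improved, and completes the proof.

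For the write-up I would state the rectangle computation briefly, referring to the explicit Neumann spectrum of a box (as already invoked for regular domains earlier in the paper, cf.\ the references to \cite{livre_vert} and \cite{LS}), rather than rederiving it. The only minor point to check is that $\e$ is small enough that the constant-in-$y$ mode beats the first $y$-oscillating mode, i.e.\ $\pi^2 < \pi^2/\e^2$, which holds for all $\e < 1$; this ensures $\mu_1(R_\e) = \pi^2$ exactly rather than merely being bounded by it.
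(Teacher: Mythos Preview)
Your proof is correct and follows essentially the same approach as the paper: combine Payne--Weinberger \eqref{PW} with the strict inequality $P(\Omega)>2D(\Omega)$ from \eqref{inclusion} to get the strict lower bound, then use the explicit Neumann spectrum of thin rectangles $[0,1]\times[0,\varepsilon]$ to show optimality via $P^2\mu_1=4(1+\varepsilon)^2\pi^2\to 4\pi^2$.
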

\begin{proof}  Payne-Weinberger's  inequality  \eqref{PW} says that 
$$D^2(\Omega)\mu_1(\Omega)\geq \pi^2,$$
where $D(\Omega)$ is the diameter of $\Omega$. On the other hand we can use \eqref{inclusion} which, for the recall, says that for any planar convex domain we always have $P(\Omega)> 2 D(\Omega)$. These two inequalities give \eqref{lowerB}. Now by considering a sequence of rectangles  $\Omega_\varepsilon:=[0,1]\times [0,\varepsilon]$, $0<\e <<1$,
that shrink to the segment $[0,1]\times\{0\}$ as $\e \to 0$, we observe that  
$$P^2(\Omega_\varepsilon)\mu_1(\Omega_\varepsilon)=4(1+\varepsilon)^2 \pi^2\to 4\pi^2.$$
Here we have used the exact value of $\mu_1$ for rectangles of the form $[0,L]\times [0,l]$ with $L\geq l$, which reads $\mu_1([0,L]\times[0,l])= \frac{\pi^2}{L^2}$ (see, e.g., \cite[Proposition 1.2.13]{livre_vert}). 
\end{proof}

We now show that if we relax the convexity constraint, then the maximizing problem has no solution.

\begin{proposition} \label{nonexistence1}The following equality holds:
$$\sup\left\{P^2(\Omega)\mu_1(\Omega) \quad : \quad \Omega \subset \R^2 \right\}=+\infty.$$
\end{proposition}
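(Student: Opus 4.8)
The plan is to exhibit a family of (necessarily non-convex) bounded Lipschitz domains $\Omega_N \subset \R^2$ with $P(\Omega_N)^2 \mu_1(\Omega_N) \to +\infty$. The guiding principle is that a domain which, at small scale, looks like a thin tubular neighbourhood of a metric star graph with many edges has a perimeter roughly proportional to the number of edges, while its first non-trivial Neumann eigenvalue remains comparable to the (strictly positive) first eigenvalue of the limiting metric star, which does \emph{not} depend on the number of edges. Concretely, fix an integer $N \ge 3$ and, for $\e > 0$ small, set $\Omega_{N,\e} := \bigcup_{k=0}^{N-1} R_k^\e$, where $R_k^\e$ is the rotation by the angle $k\pi/N$ of the open rectangle $(-1,1)\times(-\e/2,\e/2)$; thus $\Omega_{N,\e}$ is a union of $N$ thin ``needles'' of length $2$ crossing at the origin. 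For $\e$ small (depending on $N$) the needles are pairwise disjoint outside a disk of radius $O_N(\e)$ about the origin, and $\Omega_{N,\e}$ is a bounded polygonal, hence Lipschitz, domain.

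The lower bound on the perimeter is elementary: once $\e$ is small enough, the part of each long side of $R_k^\e$ lying at distance at least $1/2$ from the origin belongs to $\partial\Omega_{N,\e}$, and these are $2N$ pairwise disjoint arcs of total length $2N$; hence $P(\Omega_{N,\e}) \ge N$.

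The crux is to show that $\mu_1(\Omega_{N,\e})$ does not degenerate as $\e \to 0$: there exists $c_0 > 0$, \emph{independent of $N$}, such that $\mu_1(\Omega_{N,\e}) \ge c_0$ for every sufficiently small $\e$. One route is to invoke the spectral convergence of Neumann Laplacians on thin branched tubes (in the regime where the junction neighbourhood has diameter comparable to the tube width): as $\e \to 0$, $\Omega_{N,\e}$ collapses onto the metric star $\Gamma$ made of $2N$ unit edges glued at the origin with Kirchhoff (Neumann) conditions, and $\mu_1(\Omega_{N,\e}) \to \mu_1(\Gamma)$; a direct computation then gives $\mu_1(\Gamma) = \pi^2/4$ for every $N \ge 3$ (a minimiser is supported on two of the edges, where it equals $\pm\sin(\tfrac{\pi}{2}s)$, with $s$ the arclength measured from the central vertex). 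A self-contained alternative, avoiding any limiting argument, is to rescale $\Omega_{N,\e}$ by $1/\e$: the claim then reduces to the scale-invariant estimate $\mu_1 \ge c/L^2$ for the ``long-needle'' $N$-star with needles of half-length $L = 1/\e$ and unit width, which one proves by partitioning each needle into $O(L)$ unit squares, applying the explicit Poincaré--Wirtinger inequality on each square and on the (fixed) central junction region, telescoping the resulting control on the successive averages along each arm, and using the constraint $\int_{\Omega_{N,\e}} u = 0$ to bound the average over the junction. I expect this non-degeneracy of $\mu_1$ to be the main obstacle; everything else is bookkeeping.

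Granting the two bounds, the conclusion is immediate: for each $N \ge 3$ choose $\e_N > 0$ so small that $P(\Omega_{N,\e_N}) \ge N$ and $\mu_1(\Omega_{N,\e_N}) \ge c_0$; then $P(\Omega_{N,\e_N})^2\, \mu_1(\Omega_{N,\e_N}) \ge c_0 N^2 \to +\infty$ as $N \to \infty$, so $\sup\{P^2(\Omega)\mu_1(\Omega) : \Omega \subset \R^2\} = +\infty$.
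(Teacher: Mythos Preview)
Your approach is correct but takes a genuinely different route from the paper. The paper fixes the perimeter and sends $\mu_1$ to infinity: it wrinkles the boundary of a small square $Q_a$ with uniformly Lipschitz sawtooth oscillations so that $P(\Omega_n)\equiv 1$ while $\Omega_n\to Q_a$ in Hausdorff distance; by the continuity of $\mu_1$ under Hausdorff convergence for uniformly Lipschitz domains, $\mu_1(\Omega_n)\to\mu_1(Q_a)=\pi^2/a^2$, and one lets $a\to 0$. (The paper also remarks that a fractal domain such as the Koch snowflake, with infinite perimeter and finite $\mu_1$, gives an even shorter argument.) You do the opposite: you keep $\mu_1$ bounded below and send the perimeter to infinity via an $N$-armed star. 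Your first route---spectral convergence of thin Neumann tubes to the Kirchhoff Laplacian on the metric star, with $\mu_1(\Gamma)=\pi^2/4$ independent of $N$---is valid and well documented in the literature (Rubinstein--Schatzman, Kuchment--Zeng, Exner--Post), so for each $N$ one can indeed choose $\e_N$ with $\mu_1(\Omega_{N,\e_N})\ge\pi^2/8$. The paper's construction is more elementary, needing only a standard stability result for $\mu_1$; yours imports heavier machinery but is geometrically transparent. One caveat on your self-contained alternative: after rescaling by $1/\e$, the central junction is \emph{not} fixed---it is (roughly) a disk of radius $\sim N/\pi$, so its Poincar\'e constant is $\sim N^2$. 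This is not fatal (take $L=1/\e\gg N$ so that the $L^2$ term from the arms dominates the $N^2$ term from the junction), but the chaining needs to be written with this $N$-dependence in mind rather than swept under ``fixed''.
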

\begin{proof} 

Let $a>0$ be a small parameter (that will be taken infinitesimal at the end of the proof) and let $Q_a=[0,a]\times [0,a]$ be a square of size $a$.  We will construct a sequence of sets $\Omega_n$, $n\in \mathbb N$, such that for every $n\in \mathbb N$ the perimeter satisfies $P(\Omega_n)=1$, whereas, in the limit as $n\to \infty$, $\Omega_n \to Q_a$ with respect to the Hausdorff distance. For that purpose, for a given $n$ we construct a piecewise linear affine function $f_n:[0,a]\to \R$ as follows. We prescribe its value on a uniform partition of $[0,a]$ and then we extend it in an affine way:
{\renewcommand{\arraystretch}{1.5}
$$
\left\{
\begin{array}{lc}
f_n(a\frac{k}{2n})=0 & \text{ for all } k=0,\dots,n-1,   \\
f_n(a(\frac{k}{n}+\frac{1}{2n})) = b \frac{a}{2n}  & \text{ for all } k=0,\dots,n-1,   \\ 
\text{ linear affine otherwise,}
\end{array}
\right.
$$
}
where the parameter $b$ is
$$b:=\sqrt{\frac{1}{16 a^2}-1}.$$

The graph of $f_n$ is the union of $n$ isosceles triangles with base of length $a/n$ and legs of length
$$\sqrt{\left(\frac{a}{2n}\right)^2+\left(\frac{ba}{2n}\right)^2}=\frac{1}{8n}.$$
In particular, the graph of $f_n$ over $[0,a]$ has length exactly $1/4$, and does not depend on $n$. Moreover, since $\|f_n\|_\infty =  ab/(2n)\to 0$, it follows that $f_n \to 0$ uniformly in $[0,a]$, as $n\to \infty$.  Finally, we notice that $f_n$ is uniformly Lipschitz with constant $b$. Then we reproduce the graph of $f_n$ over each side of the square $Q_a$, see Figure \ref{Figure1} below: this procedure allows to construct (the boundary of) a sequence of domains $\Omega_n$, uniformly Lipschitz, converging to $Q_a$ with respect to the Hausdorff distance.

\begin{figure}[h!]                                             
\begin{center}
\includegraphics[height=7truecm]{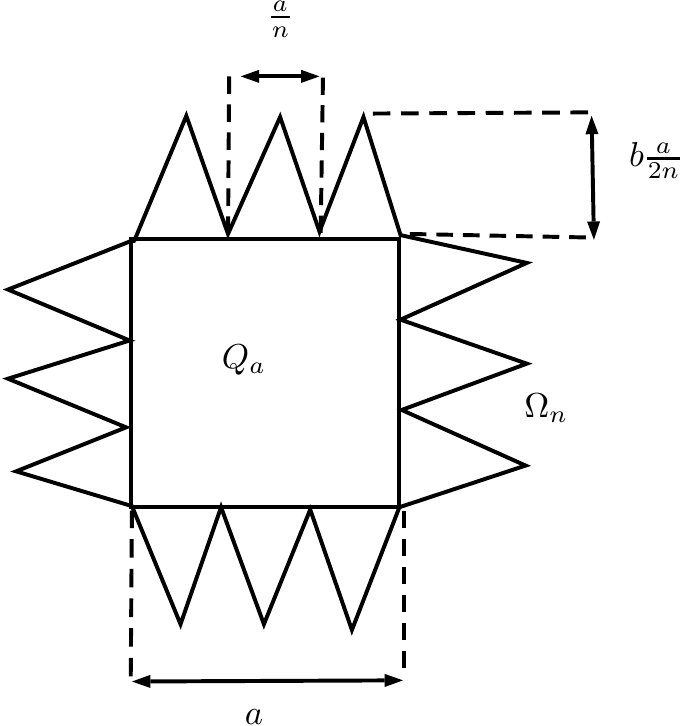}
\caption{\textit{The construction of the sequence of sets $\Omega_n$.}}\label{Figure1}
\end{center}
\end{figure}

By applying  \cite{henrot-ftouhi}, (see also \cite[Section 2.3]{livre_vert}) we know that $\mu_1(\Omega_n) \to \mu_1(Q_a)=\frac{\pi^2}{a^2}$.  On the other hand $P(\Omega_n)=1$ for all $n$, so that $P^2(\Omega_n)\mu_1(\Omega_n)\to \frac{\pi}{a^2}$. Consequently,
$$\sup\left\{P^2(\Omega)\mu_1(\Omega) \quad : \quad \Omega \subset \R^2 \right\} \geq \frac{\pi^2}{a^2},$$
and we conclude by finally letting   $a\to 0$ in the above inequality. 
\end{proof}
\begin{remark}
Let us observe that there exist domains with infinite perimeter but with a finite Neumann eigenvalue. This is, in particular, the case of some
fractal domains as the Koch snowflake. Obviously, this provides another proof of Proposition \ref{nonexistence1}.
\end{remark}

\section{A general bound on $\mu_1$ and the case of orthogonal axis of symmetry}\label{sectionI}

This section is devoted to a new upper bound of $\mu_1$, which is valid for generic planar domains (not necessarily convex) and which involves two geometric functionals: the area and the minimal width. In the convex setting, this allows us to show that the conjecture holds true for a broad class of domains: those for which the perimeter satisfies a suitable upper bound that matches the new upper bound of $\mu_1$. In the second part of the section, we provide some examples of such domains, proving in particular 
the conjecture \eqref{ineqConj} for convex sets with two orthogonal axis of symmetry.

\begin{lemma}\label{boundmu1} For every (non-empty) open and bounded set $\Omega\subset \R^2$,  we have 
\begin{equation}\label{inf}
\inf_{u \in H^1(\Omega) \; : \; \int_{\Omega} u \dx=0} \frac{\int_{\Omega} |\nabla u|^2 \; \dx}{\int_{\Omega} u^2 \;\dx} \leq \pi^2\frac{w^2(\Omega)}{|\Omega|^2}.
\end{equation}
In particular, if $\mu_1(\Omega)$  is well defined, then $\mu_1(\Omega)\leq \pi^2\w^2(\Omega)/|\Omega|^2$.\\
Equality occurs in this inequality only for rectangles.
\end{lemma}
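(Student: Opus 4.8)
The plan is to test the Rayleigh quotient in \eqref{inf} with an explicit one-dimensional function built in the direction orthogonal to a minimal-width direction, and then to estimate the gradient term using that every slice of $\Omega$ in that direction is no longer than $w(\Omega)$.

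First I would rotate $\Omega$ so that $w(\Omega)=\mathcal H^1\bigl(p_{e_2}(\Omega)\bigr)$, and after a translation assume $\Omega\subset \mathbb R\times(0,w)$ with $w:=w(\Omega)$. For $x\in\mathbb R$ set $g(x):=\mathcal H^1\bigl(\{y:(x,y)\in\Omega\}\bigr)$, the length of the vertical slice of $\Omega$ over $x$; by Fubini $g$ is measurable, $\int_{\mathbb R}g(x)\,\dx=|\Omega|$, and the key point is $0\le g(x)\le w$ for every $x$, simply because the slice is contained in $(0,w)$. Let $G(x):=\int_{-\infty}^x g(t)\,\mathrm dt$, a nondecreasing $w$-Lipschitz function mapping $\mathbb R$ onto $[0,|\Omega|]$, and use as test function
\[
u(x,y):=\cos\!\Bigl(\frac{\pi\,G(x)}{|\Omega|}\Bigr).
\]
Then $u$ is globally Lipschitz and bounded, hence $u|_\Omega\in H^1(\Omega)$ (no regularity of $\partial\Omega$ needed), with $\partial_x u=-\frac{\pi}{|\Omega|}\sin\!\bigl(\frac{\pi G(x)}{|\Omega|}\bigr)g(x)$ and $\partial_y u=0$ a.e.

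Next I would carry out the routine bookkeeping. Since the push-forward of the measure $g(x)\,\dx$ under $G$ is Lebesgue measure on $[0,|\Omega|]$, one has $\int_\Omega\varphi(x)\,\dx\,\dy=\int_{\mathbb R}\varphi(x)g(x)\,\dx=\int_0^{|\Omega|}\varphi\bigl(G^{-1}(s)\bigr)\,\mathrm ds$ in the obvious sense, giving $\int_\Omega u=0$ and $\int_\Omega u^2=\int_0^{|\Omega|}\cos^2\!\bigl(\frac{\pi s}{|\Omega|}\bigr)\mathrm ds=|\Omega|/2>0$; thus $u$ is admissible. For the numerator, using $g(x)^3\le w^2g(x)$,
\[
\int_\Omega|\nabla u|^2=\frac{\pi^2}{|\Omega|^2}\int_{\mathbb R}\sin^2\!\Bigl(\tfrac{\pi G(x)}{|\Omega|}\Bigr)g(x)^3\,\dx\ \le\ \frac{\pi^2 w^2}{|\Omega|^2}\int_{\mathbb R}\sin^2\!\Bigl(\tfrac{\pi G(x)}{|\Omega|}\Bigr)g(x)\,\dx=\frac{\pi^2 w^2}{|\Omega|^2}\cdot\frac{|\Omega|}{2}.
\]
Dividing by $\int_\Omega u^2=|\Omega|/2$ shows the Rayleigh quotient of $u$ is at most $\pi^2 w^2/|\Omega|^2$, which proves \eqref{inf}; when $\mu_1(\Omega)$ is well defined it equals that infimum, giving the stated bound. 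For rectangles $R=[0,L]\times[0,\ell]$ with $L\ge\ell$ one has $w(R)=\ell$ and $\mu_1(R)=\pi^2/L^2=\pi^2 w(R)^2/|R|^2$, so equality does occur.

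Finally, for the equality case: if $\mu_1(\Omega)=\pi^2 w^2/|\Omega|^2$, then the inequality $g^3\le w^2 g$ above is saturated wherever $\sin^2\!\bigl(\tfrac{\pi G(x)}{|\Omega|}\bigr)g(x)>0$; since $G$ is strictly increasing on $\{g>0\}$, the set where $\sin\!\bigl(\tfrac{\pi G}{|\Omega|}\bigr)$ vanishes is negligible there, so $g(x)\in\{0,w\}$ for a.e.\ $x$. Hence $\Omega$ coincides, up to a Lebesgue-null set, with $E\times(0,w)$ where $E:=\{g>0\}$. By openness of $\Omega$ the set $E$ is open, and if $\Omega$ is connected (disconnected $\Omega$ makes the infimum in \eqref{inf} equal to $0<\pi^2 w^2/|\Omega|^2$, hence is excluded) then $E$ is an interval and $\Omega$ is a rectangle. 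I expect this last step — upgrading the almost-everywhere product structure to a genuine rectangle via openness and connectedness — to be the only delicate point; everything else is a short computation.
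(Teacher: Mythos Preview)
Your argument is correct and reaches the same conclusion, but by a genuinely different mechanism than the paper's. The paper fixes a reference rectangle $R=[-L/2,L/2]\times[-w/2,w/2]$ with $L=|\Omega|/w$, takes the test function $\sin(\pi x/L)$ on $R$ extended by $\pm 1$ outside, and translates $\Omega$ horizontally until the mean vanishes; the estimates then come from comparing $\int_\Omega$ with $\int_R$ via $|u|\le 1$ and $|R|=|\Omega|$. You instead build the zero-mean condition directly into the test function via the cumulative slice function $G$, so no translation is needed, and your single inequality $g^3\le w^2 g$ replaces the paper's two chains of inequalities. Your route is arguably cleaner for the inequality itself and makes the equality analysis more transparent (you see immediately that equality forces $g\in\{0,w\}$ a.e.); the paper's route is simpler in that the test function is elementary and no change of variables is required. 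One small imprecision: writing $\Omega\subset\mathbb R\times(0,w)$ presumes the projection onto the $y$-axis is an interval, which need not hold for disconnected $\Omega$; but all you actually use is $g(x)\le w$, and this follows because every vertical slice is contained in the projection, whose measure is $w$. Your treatment of the equality case (upgrading ``$\Omega=E\times(0,w)$ up to null sets'' to ``$\Omega$ is a rectangle'') is at the same level of detail as the paper's, which also dispatches it in one line.
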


\begin{proof} Let $\Omega\subset \mathbb R^2$ be a (non-empty) bounded open set. Throughout the proof, let $A:=|\Omega|$ and $\w:=\w(\Omega)$.
Without loss of generality, we may assume that the coordinate system is chosen so that the minimal width of $\Omega$ is achieved in the vertical direction. In other words $\Omega$ is contained into the horizontal strip $S:= \R \times [-w/2 , w/2]$. In $S$ we consider the rectangle 
$$
R:=[-L/2 , L/2]\times [-\w/2 , \w/2], \quad \hbox{with }L:=A/\w,
$$
having area $|R|=A$, and we define the function
$$
u(x,y):=
\left\{
\begin{array}{lll}
\sin(\pi x /L) & \text{ in } R\\
+1 & \text{ in } S \setminus R, \ x\geq L/2 \\
-1 & \text{ in } S \setminus R,\  x\leq -L/2.
\end{array}
\right.
$$
Since $\Omega\subset \subset S$, the function $u$ is well defined in $\Omega$, but it does not have (in general) zero average in it. For $t\in \mathbb R$, let us denote by $\Omega_{t}$ the translation $\Omega_{t}:=\Omega + (t,0)$, which is still contained into the strip $S$. The map $t \mapsto \int_{\Omega_{t}} u $ is continuous, moreover, by construction of $u$ it is negative for $t<0$ large enough in modulus, and it is positive for $t>0$ large enough. Therefore, there exists $x_0\in \mathbb R$ such that $\int_{\Omega_{x_0}} u= 0 $. Since all the functionals appearing in \eqref{inf} are invariant under translation, it is enough to prove the statement for $\Omega_{x_0}$.
The choice of $x_0$ allows us to obtain
$$
\inf_{f \in H^1(\Omega_{x_0}) \; : \; \int_{\Omega_{x_0}} f \dx=0} \frac{\int_{\Omega_{x_0}} |\nabla f|^2 \;\dx}{\int_{\Omega_{x_0}} f^2 \;\dx}   \leq   \frac{\int_{\Omega_{x_0}} |\nabla u|^2 \;\dx}{\int_{\Omega_{x_0}} u^2 \;\dx}.
$$
Let us bound from above the numerator and from below the denominator of the Rayleigh quotient:
\begin{eqnarray}
\int_{\Omega_{x_0}} |\nabla u|^2 \;\dx & =& \int_{\Omega_{x_0}\cap R} |\nabla u|^2 \;\dx \leq  \int_R |\nabla u|^2\;\dx;  \notag \\
\int_{\Omega_{x_0}}  u^2 \;\dx & =& \int_{\Omega_{x_0}\cap R} u^2 \;\dx + |\Omega_{x_0} \setminus R| \notag \\
&= & \int_R u^2 \;\dx   - \int_{R\setminus \Omega_{x_0}} u^2 \;\dx +  |\Omega_{x_0} \setminus R| \notag \\
&\geq & \int_{R} u^2 \;\dx - |R\setminus \Omega_{x_0}| +  |\Omega_{x_0} \setminus R|  =  \int_{R} u^2 \;\dx. \notag 
\end{eqnarray}

\begin{figure}[h]                                             
\begin{center}
\includegraphics[height=7truecm]{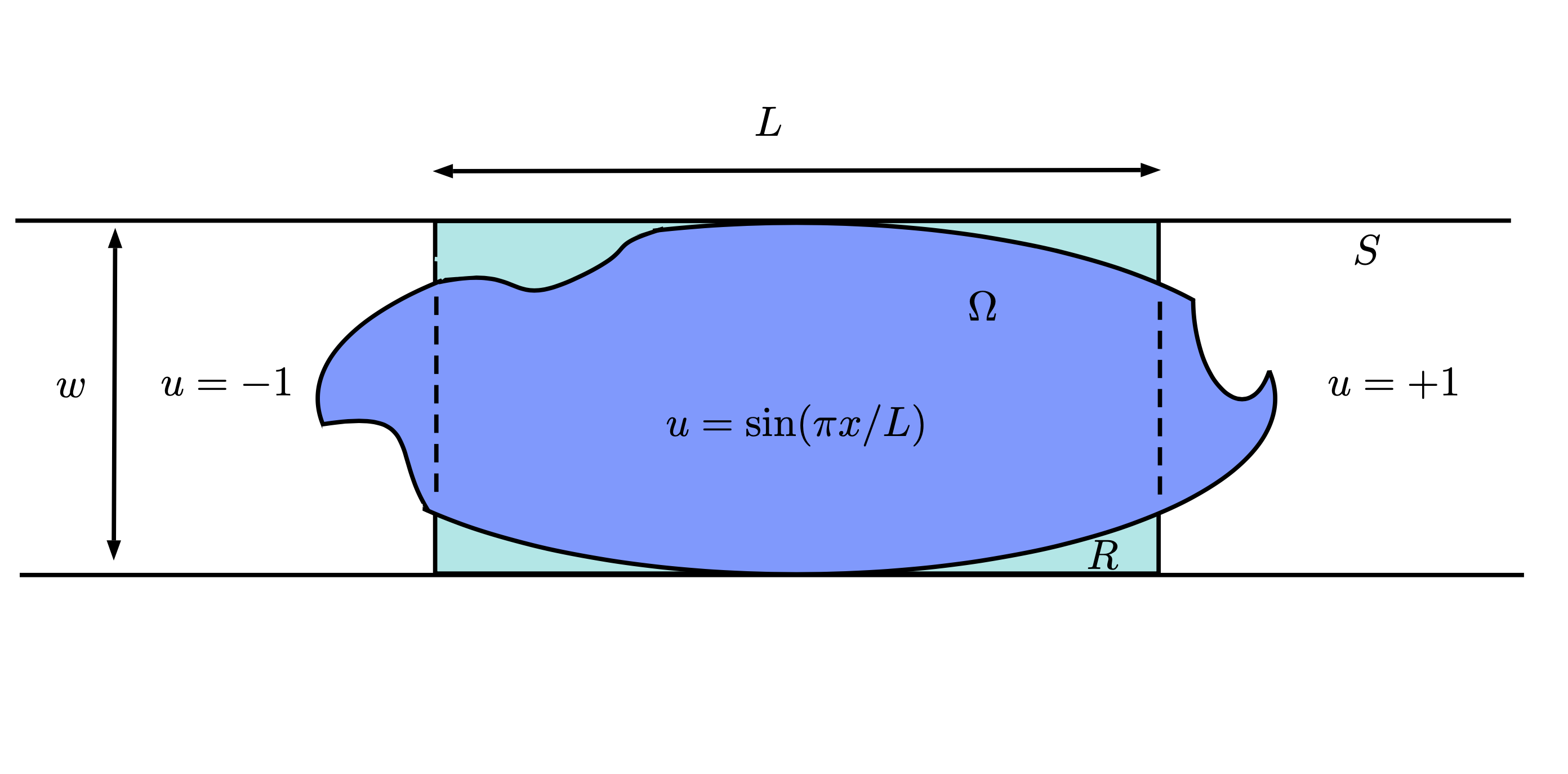}
\caption{\textit{The competitor $u$ for the domain $\Omega$.}}\label{Figure2}
\end{center}
\end{figure}

In the lower bound of the denominator we have used $|u|\leq 1$ (true by construction) and $|\Omega_{x_0} \setminus R|= |R\setminus \Omega_{x_0}|$ (true since $|\Omega_{x_0}|=A=|R|$). A direct computation gives
$$
 \int_R |\nabla u|^2\;\dx = \frac{\pi^2 \w}{2L}, \quad  \int_{R} u^2 \;\dx = \frac{\w L}{2},
$$
so that, recalling the choice $L=A/\w$, 
$$
\inf_{f \in H^1(\Omega_{x_0}) \; : \; \int_{\Omega_{x_0}} f \dx=0} \frac{\int_{\Omega} |\nabla f|^2 \;\dx}{\int_{\Omega_{x_0}} f^2 \;\dx}  \leq \frac{\pi^2}{L^2}= \frac{\pi^2\w^2}{A^2},
$$
which concludes the proof of \eqref{inf}. The equality case comes immediately from the study of the two chains of inequalities. \end{proof}

 In the present paper we deal with planar shapes, however Lemma \ref{boundmu1} can be generalized in higher dimension, as we show in Lemma \ref{boundmu1N}. To state this result, we need some notation.
Let $\Omega$ be any bounded set in $\mathbb{R}^N$. We successively define, for $k=1,2,\ldots N-1$:
\begin{itemize}
\item $w_1$ the minimal width of $\Omega$ and $\xi_1$ the unit vector defining the direction of this minimal width,
\item $w_2$ the minimal width of the orthogonal projection of $\Omega$ on the space $Vect[\xi_1]^\perp$ and $\xi_2$ the unit 
vector defining the direction of this minimal width,
\item $\vdots$
\item $w_k$ the minimal width of the orthogonal projection of $\Omega$ on the vector space $Vect[\xi_1,\xi_2,\ldots,\xi_{k-1}]^\perp$  and $\xi_k$ the unit vector defining the direction of this minimal width.
\end{itemize}
Then we can state:
\begin{lemma}\label{boundmu1N} For every open, non-empty and bounded set $\Omega\subset \R^N$, we have 
\begin{equation}\label{inf2b}
\inf_{u \in H^1(\Omega) \; : \; \int_{\Omega} u \; \dx=0} \frac{\int_{\Omega} |\nabla u|^2 \; \dx}{\int_{\Omega} u^2 \;\dx} \leq
 \pi^2\frac{\prod_{k=1}^{N-1}w_k^2}{|\Omega|^2}.
\end{equation}
In particular, if $\mu_1(\Omega)$  is well defined, then $\mu_1(\Omega)\leq \pi^2\prod_{k=1}^{N-1}w_k^2/|\Omega|^2$.
\end{lemma}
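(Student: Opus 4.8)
The plan is to transcribe, in $\R^N$, the one–dimensional competitor used in the proof of Lemma~\ref{boundmu1}, the planar strip being replaced by an infinite ``slab'' adapted to the iterated minimal widths $w_1,\dots,w_{N-1}$. Since $\Omega$ is open, non-empty and bounded, each $w_k$ is finite and strictly positive. First I would choose coordinates so that $\xi_1,\dots,\xi_{N-1}$ are the first $N-1$ vectors of the canonical basis and $\xi_N$ completes them to an orthonormal basis; write $x=(x_1,\dots,x_N)$. By the very definition of $\xi_1$ and $w_1$, the extent of $\Omega$ in the direction $\xi_1$ is at most $w_1$; then the extent of the orthogonal projection of $\Omega$ onto $Vect[\xi_1]^\perp$ in the direction $\xi_2$ is at most $w_2$; and so on, down to $\xi_{N-1}$. (For a connected $\Omega$ this is just the fact that the orthogonal projection onto a line is an interval, whose length equals its $\mathcal H^1$–measure; in general one argues exactly as in the planar case.) Hence, after a suitable translation,
\[
\Omega\subset S:=I_1\times\cdots\times I_{N-1}\times\R,\qquad I_k:=\big[-\tfrac{w_k}{2},\tfrac{w_k}{2}\big].
\]

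Inside $S$ I place the box $R:=I_1\times\cdots\times I_{N-1}\times[-L/2,L/2]$ with $L:=|\Omega|/\prod_{k=1}^{N-1}w_k$, so that $|R|=|\Omega|$, and I take as competitor $u(x):=\phi(x_N)$, where $\phi(t):=\sin(\pi t/L)$ for $|t|\le L/2$, $\phi(t):=1$ for $t\ge L/2$ and $\phi(t):=-1$ for $t\le -L/2$ (this $\phi$ is of class $C^1$ on $\R$, since $\cos(\pm\pi/2)=0$, so $u\in H^1(\Omega)$). To make $u$ admissible in \eqref{inf2b} I translate $\Omega$ along $\xi_N$: the map $t\mapsto\int_{\Omega+t\xi_N}u\,\dx$ is continuous, negative for $-t$ large and positive for $t$ large, hence it vanishes at some $t_0$; the translate $\Omega':=\Omega+t_0\xi_N$ is still contained in $S$, satisfies $\int_{\Omega'}u\,\dx=0$, and, all quantities in \eqref{inf2b} being translation invariant, it suffices to estimate the Rayleigh quotient of $u$ on $\Omega'$.

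The two estimates then go exactly as in the proof of Lemma~\ref{boundmu1}. For the numerator, $\nabla u$ is supported in $\{|x_N|<L/2\}$, so $\int_{\Omega'}|\nabla u|^2\,\dx\le\int_R|\nabla u|^2\,\dx$. For the denominator, since $\Omega'\subset S$ one can leave $R$ only in the $\xi_N$–direction, where $|u|=1$, so that
\[
\int_{\Omega'}u^2\,\dx=\int_{\Omega'\cap R}u^2\,\dx+|\Omega'\setminus R|\ge\int_R u^2\,\dx-|R\setminus\Omega'|+|\Omega'\setminus R|=\int_R u^2\,\dx,
\]
using $|u|\le 1$ on $R$ and $|R\setminus\Omega'|=|\Omega'\setminus R|$ (which holds because $|R|=|\Omega|=|\Omega'|$). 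A direct computation gives $\int_R|\nabla u|^2\,\dx=\pi^2\prod_{k=1}^{N-1}w_k/(2L)$ and $\int_R u^2\,\dx=L\prod_{k=1}^{N-1}w_k/2$, whence the Rayleigh quotient of $u$ on $\Omega'$ is at most $\pi^2/L^2=\pi^2\big(\prod_{k=1}^{N-1}w_k\big)^2/|\Omega|^2$. This proves \eqref{inf2b}, and the bound on $\mu_1(\Omega)$ follows from the variational characterization \eqref{mu1}.

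I do not expect a genuine difficulty here: the argument is a faithful higher–dimensional copy of the planar one, and all the computations are elementary. The one step that deserves care is the very first one, namely verifying that the iterated minimal–width construction indeed yields a product of $N-1$ bounded intervals (times $\R$) containing $\Omega$; once that slab is in hand, the choice of $L$ (which forces $|R|=|\Omega|$) and the cut–off function $\phi$ make the numerator/denominator comparison work verbatim. If desired, the equality discussion can be carried out as in Lemma~\ref{boundmu1}, the two chains of inequalities forcing $\Omega$ to coincide, up to translation, with the box $R$, i.e. with a rectangular parallelepiped.
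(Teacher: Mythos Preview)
Your proposal is correct and follows exactly the approach the paper indicates: introduce the cuboid with $N-1$ sides of length $w_1,\dots,w_{N-1}$ and one side of length $L=|\Omega|/\prod_k w_k$, take the same one-variable test function, translate to get zero mean, and run the numerator/denominator comparison verbatim from the planar case. The only cosmetic difference is that the paper places the ``long'' direction along $x_1$ while you place it along $x_N$, which is immaterial.
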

The proof of this Lemma follows exactly the same line as in the two-dimensional case by introducing the cuboid 
$$K=\left[-\frac{L}{2}, \frac{L}{2}\right]\times \left[-\frac{w_{N-1}}{2},\frac{w_{N-1}}{2}\right]\times \ldots \times \left[-\frac{w_1}{2}, \frac{w_1}{2}\right]$$ 
with $L$ chosen such that $L \prod_{k=1}^{N-1} w_k = |\Omega|$ and $u$ the same test function depending only on the first variable $x_1$.
The proof is left to the reader.
 
 \medskip
A direct consequence of Lemma \ref{boundmu1} is the immediate Corollary \ref{corr} stated in the Introduction, which says that the convex sets for which 
\begin{eqnarray}
 P(\Omega) w(\Omega)\leq 4 |\Omega| \label{inequG2}
 \end{eqnarray}
satisfy the conjecture \eqref{ineqConj}. In what follows we provide some examples. We start with parallelograms: the next proposition generalizes the result contained in \cite{raiko}, where a similar statement has been proved only for some parallelograms of particular type. 

\begin{proposition}\label{parallelogram} All parallelograms satisfy \eqref{inequG2}. In particular all parallelograms satisfy \eqref{ineqConj}.  
\end{proposition}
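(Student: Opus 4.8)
\emph{Proof strategy.} Since a parallelogram is convex, by Corollary~\ref{corr} it is enough to verify the width--perimeter--area inequality \eqref{inequG2}, i.e.\ $P(\Omega)w(\Omega)\le 4|\Omega|$. Up to a rigid motion I would place one vertex of $\Omega$ at the origin and take the two emanating sides along $(a,0)$ and $(b\cos\theta,b\sin\theta)$, where $a,b>0$ are the side lengths and $\theta\in(0,\pi)$ is the angle between the sides; by relabelling I may assume $a\le b$. Then immediately $|\Omega|=ab\sin\theta$ and $P(\Omega)=2(a+b)$, so the whole matter reduces to computing the minimal width $w(\Omega)$.

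The one genuine step is that width computation. Working straight from the definition $w(\Omega)=\min_{\nu\in\mathbb S^1}\mathcal H^1(p_\nu(\Omega))$, for $\nu=(\cos\phi,\sin\phi)$ the four vertices of $\Omega$ project onto $\mathbb R\nu$ to the scalars $0$, $a\cos\phi$, $b\cos(\theta-\phi)$, $a\cos\phi+b\cos(\theta-\phi)$; using the elementary identity $\max\{0,p,q,p+q\}-\min\{0,p,q,p+q\}=|p|+|q|$ one gets
\[
\mathcal H^1(p_\nu(\Omega))=a|\cos\phi|+b|\cos(\theta-\phi)|=:g(\phi).
\]
On each maximal interval where $\operatorname{sgn}(\cos\phi)$ and $\operatorname{sgn}(\cos(\theta-\phi))$ are both constant, $g$ is a sum of two terms of the form $a\cos(\phi-\alpha)$ and $b\cos(\phi-\beta)$, each nonnegative and hence concave on that interval; therefore $g$ is concave there and attains its minimum at an endpoint. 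The endpoints are the zeros of $\cos\phi$ and of $\cos(\theta-\phi)$, at which $g$ equals $b\sin\theta$ and $a\sin\theta$ respectively, so
\[
w(\Omega)=\min\{a\sin\theta,\;b\sin\theta\}=a\sin\theta .
\]
(Equivalently, one may simply invoke the classical fact that the minimal width of a convex polygon is attained in a direction orthogonal to one of its edges, which for a parallelogram yields exactly the two heights $a\sin\theta$ and $b\sin\theta$.)

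With the three quantities $|\Omega|=ab\sin\theta$, $P(\Omega)=2(a+b)$, $w(\Omega)=a\sin\theta$ in hand, the conclusion is a one-line inequality:
\[
P(\Omega)\,w(\Omega)=2(a+b)\,a\sin\theta\;\le\;4ab\sin\theta=4|\Omega|
\quad\Longleftrightarrow\quad a\le b,
\]
which holds by the normalization, with equality precisely when $a=b$, that is, for rhombi (squares included). This establishes \eqref{inequG2} for every parallelogram, and \eqref{ineqConj} then follows from Corollary~\ref{corr}. The main — and essentially only — obstacle is the identification of $w(\Omega)$ above; the remaining computations are routine bookkeeping.
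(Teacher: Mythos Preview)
Your proof is correct and follows essentially the same route as the paper: compute $P=2(a+b)$, $|\Omega|=ab\sin\theta$, bound the width by the shorter height, and conclude $Pw\le 4|\Omega|$ via $a\le b$, then invoke Corollary~\ref{corr}. The only difference is that you prove the \emph{exact} value $w(\Omega)=a\sin\theta$ through the concavity argument on $g(\phi)=a|\cos\phi|+b|\cos(\theta-\phi)|$, whereas the paper simply uses the trivial upper bound $w\le \ell\sin\theta$ (the width is at most the projection in the direction orthogonal to the longer side), which already suffices for the inequality; your sharper computation is correct but not needed here.
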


\begin{proof} Let $\Omega$ be a parallelogram with area $A$, minimal width $\w$, and perimeter $P$. Let $L$ be largest side and $\ell$ the smaller side i.e. $\ell\leq L$. Let $\theta$ be the smallest angle, as in Figure \ref{Figure3}. 
 \begin{figure}[h]                                             
\begin{center}
\includegraphics[height=4truecm]{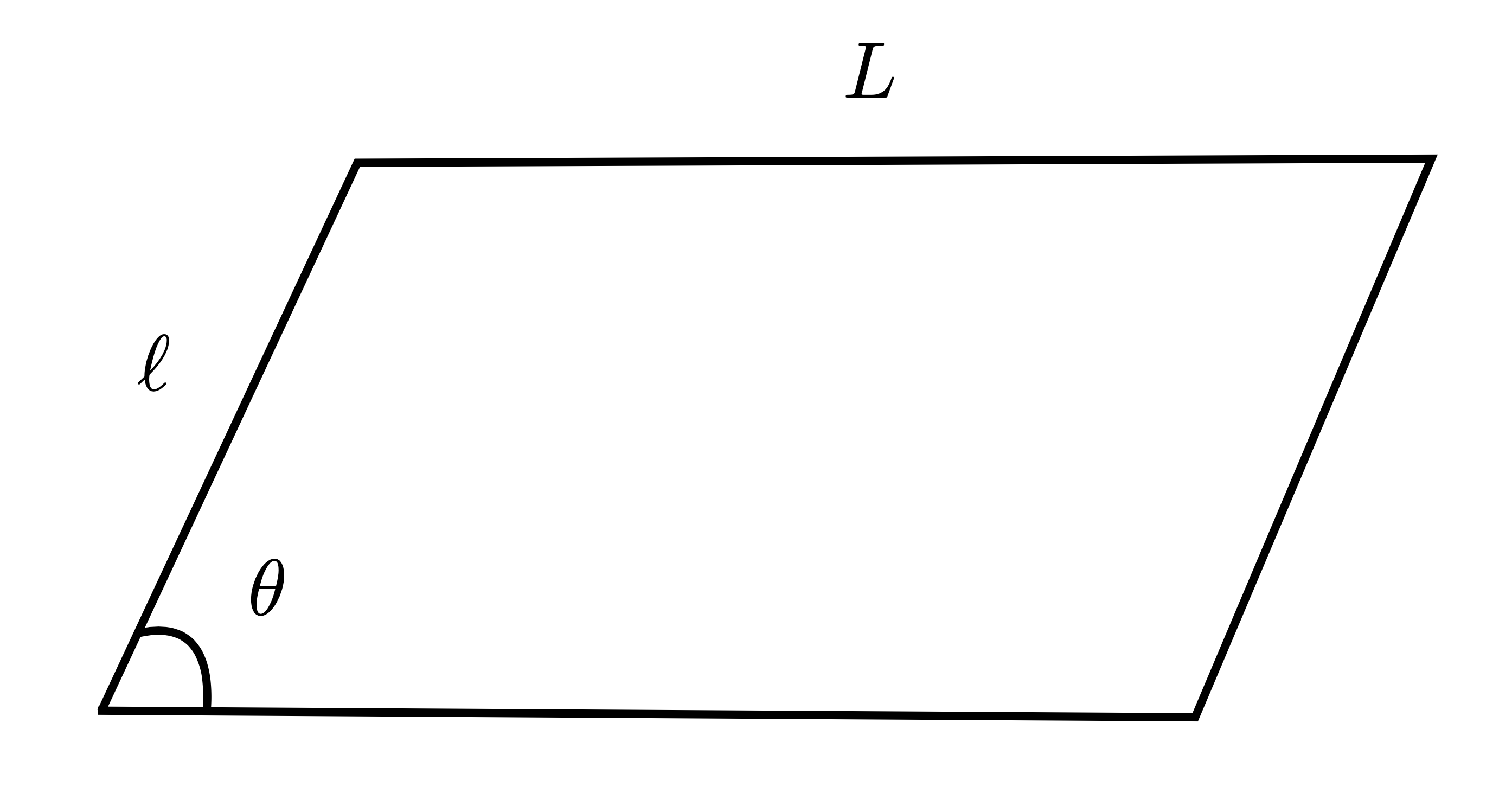}\\
\caption{\textit{A parallelogram.}}\label{Figure3}
\end{center}
\end{figure}
It is easy to see that
$$
P=2\ell +2 L, \quad w \leq   \ell \sin(\theta), \quad  A=\sin(\theta)\ell L.
$$ 
These inequalities imply \eqref{inequG2}:
$$
P\w\leq (2\ell +2L) \sin(\theta)\ell\leq 4 L \sin(\theta)\ell  = 4A.
$$
The validity of \eqref{ineqConj} follows by Corollary \ref{corr}.
\end{proof}

Another family of domains satisfying $P(\Omega)\w(\Omega) \leq 4 |\Omega|$ is that of convex sets with two orthogonal axis of symmetry. 
\begin{proposition}\label{orthogonal} All planar convex domains with two orthogonal axis of symmetry satisfy \eqref{inequG2}. In particular all planar convex domains with two orthogonal axis of symmetry satisfy inequality \eqref{ineqConj}; moreover, the equality sign is satisfied only for squares.
\end{proposition}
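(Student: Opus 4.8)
The plan is to reduce the inequality $P(\Omega)w(\Omega)\le 4|\Omega|$ to a one-dimensional statement about the width function of a convex set symmetric with respect to two perpendicular lines. Choose coordinates so that the two axes of symmetry are the $x$- and $y$-axes. By symmetry $\Omega$ is determined by its intersection with the first quadrant, and we may write the part of $\partial\Omega$ in the closed first quadrant as the graph of a concave, nonincreasing function: there is a concave function $g:[0,c]\to[0,\infty)$, with $g(0)=d$ the half-height of $\Omega$ and $g(c)=0$, such that $\Omega\cap\{x\ge 0,y\ge 0\}=\{(x,y): 0\le x\le c,\ 0\le y\le g(x)\}$. (If $\Omega$ has vertical segments on its boundary one allows $g$ to jump at $c$; concavity still makes sense.) Then $|\Omega|=4\int_0^c g(x)\,\dx$, and the perimeter is $P(\Omega)=4\int_0^c\sqrt{1+g'(x)^2}\,\dx$ plus, if present, the length $4(g(c^-)-0)$ of the vertical piece, i.e. $P(\Omega)=4\,\mathcal H^1(\text{graph of }g)$ counted with the closing vertical segment.

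The key observation is that for such a doubly symmetric convex set the minimal width is controlled by the two ``obvious'' widths: $w(\Omega)\le \min\{2c,2d\}$, where $2c$ and $2d$ are the extents of $\Omega$ in the $x$- and $y$-directions. Indeed projecting onto either coordinate axis gives width $2c$ resp. $2d$, and $w(\Omega)$ is the minimum of the projection lengths over all directions. So it suffices to prove
\[
4\,\mathcal H^1(\text{graph of }g)\cdot \min\{c,d\}\ \le\ 4\int_0^c g(x)\,\dx,
\]
that is, $L(g)\cdot\min\{c,d\}\le |\Omega|/4$ where $L(g)$ is the length of the boundary arc in one quadrant. Now use $\min\{c,d\}\le c$ to bound one factor, and separately $\min\{c,d\}\le d=g(0)$ to bound the other; more precisely, split $L(g)=\int_0^c\sqrt{1+g'^2}\,\dx\;(+\text{vertical piece})$ and estimate each infinitesimal arc length against its horizontal and vertical components, $\sqrt{1+g'(x)^2}\,\dx\le \dx + |g'(x)|\,\dx = \dx - g'(x)\,\dx$ (since $g$ is nonincreasing). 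Integrating, $L(g)\le c + g(0) = c+d$, with the vertical closing segment, if any, already accounted for by the $-g'$ term interpreted as a measure. Hence
\[
P(\Omega)\,w(\Omega)\le 4(c+d)\min\{c,d\}\le 4\cdot 2\max\{c,d\}\cdot\min\{c,d\} \quad\text{— wrong direction.}
\]
So the crude split is too lossy; instead I would keep $L(g)\le c+d$ and bound $w(\Omega)$ by whichever of $2c,2d$ is smaller, say $w\le 2c$ (the case $w\le 2d$ is symmetric after exchanging roles, reflecting the set across the line $y=x$ — note this reflection is allowed since we may relabel axes). Then it suffices to show $(c+d)\cdot 2c\le 4\int_0^c g$, i.e. $c^2 + cd \le 2\int_0^c g(x)\,\dx$. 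This is where concavity enters decisively: since $g$ is concave with $g(0)=d$, $g(c)=0$, the chord inequality gives $g(x)\ge d(1-x/c)$, whence $\int_0^c g\ge cd/2$; and we need the stronger $\int_0^c g\ge (c^2+cd)/2$, which forces us to exploit that $g$ lies \emph{above} its chord by a definite amount, or to choose the better of the two directions more carefully. The honest statement is: one of the two inequalities $c\le d$ or $d\le c$ holds, and in the first case take $w\le 2c$ while in the second take $w\le 2d$; by concavity $2\int_0^c g\ge cd + (\text{area between }g\text{ and its chord})$, and a symmetrization/rearrangement argument shows the area above the chord is at least $\tfrac12|c-d|\cdot\min\{c,d\}$ or similar — the precise book-keeping is exactly the content the authors will supply.

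The main obstacle, and the heart of the matter, is this last step: proving the sharp linear–times–width bound $L(g)\cdot\min\{c,d\}\le 2\int_0^c g$ for an arbitrary concave $g$ with $g(0)=d,g(c)=0$, and identifying that equality forces $g$ to be constant on its support — i.e. $\Omega$ is a rectangle, hence (being a \emph{square}'s worth once the two-axes symmetry is imposed together with the equality analysis of Lemma~\ref{joli}, which itself characterizes rectangles) the only equality case among doubly-orthogonally-symmetric convex sets is the square. I would organize the equality discussion by tracking when each inequality above is tight: $w(\Omega)=2c$ (or $2d$) with no smaller width in an oblique direction forces $\Omega$ to be a rectangle with sides parallel to the axes; combined with Corollary~\ref{corr} and the rectangle-equality case of Lemma~\ref{joli} (where $w^2/|\Omega|^2$ meets $\mu_1$ exactly at rectangles), and the fact that $P w = 4|\Omega|$ for a rectangle $[0,a]\times[0,b]$ reads $2(a+b)\min\{a,b\}=4ab$, i.e. $(a-b)\min\{a,b\}=0$, forcing $a=b$. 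Thus the equality sign in \eqref{inequG2}, and hence in \eqref{ineqConj} via this route, holds only for the square.
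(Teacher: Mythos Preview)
Your approach has a genuine, unfixable gap at exactly the point where you say ``the precise book-keeping is exactly the content the authors will supply.'' It is not book-keeping: the inequality you are trying to reach, namely
\[
L(g)\cdot \min\{c,d\}\ \le\ 2\int_0^c g(x)\,\dx
\]
for a concave nonincreasing $g$ with $g(0)=d$, $g(c)=0$, is \emph{false}. Take the rhombus with vertices $(\pm c,0)$, $(0,\pm d)$: then $g$ is linear, $\int_0^c g = cd/2$, and $L(g)=\sqrt{c^2+d^2}$, so with $c\le d$ you would need $\sqrt{c^2+d^2}\cdot c\le cd$, i.e.\ $c^2+d^2\le d^2$, which fails for $c>0$. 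The problem lies earlier: bounding $w(\Omega)$ by $\min\{2c,2d\}$ throws away precisely the information that makes the inequality true. For the rhombus the minimal width is $2cd/\sqrt{c^2+d^2}$, strictly smaller than $2\min\{c,d\}$, and this is exactly what rescues $Pw\le 4|\Omega|$ (in fact the rhombus gives equality there).

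The paper's argument avoids this trap with a different geometric idea. Approximate $\Omega$ by a polygon, and for each side $a_i$ let $h_i$ be the distance from the origin (the common center of symmetry) to the line containing that side. The key point is that this line is a support line of $\Omega$, and by the two orthogonal symmetries (in fact central symmetry suffices) its reflection through the origin is also a support line; hence $2h_i\ge w(\Omega)$ for \emph{every} $i$. Since $P(\Omega)=4\sum a_i$ and $|\Omega|=2\sum a_i h_i$, one gets $|\Omega|\ge \sum a_i\, w(\Omega)=\tfrac14\, w(\Omega)P(\Omega)$ immediately. Your equality discussion at the end (combine the rectangle characterization from Lemma~\ref{joli} with $Pw=4|\Omega|$ on rectangles to force a square) is correct and matches the paper, but it only becomes relevant once the inequality itself is established.
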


\begin{proof} In virtue of Corollary \ref{corr}, to get \eqref{ineqConj} we only need to prove \eqref{inequG2}. By approximation of $\Omega$ with polygons, we may directly assume that $\Omega$ is a polygon. We also assume that the two axis of symmetry are  $\{x=0\}$ and $\{y=0\}$. We consider the part of $\Omega$ in the first quadrant $\{x\geq 0, \ y\geq 0\}$: by connecting the origin with the vertexes of the polygon, we divide $\Omega$ into triangles. Let $N$ be the number of such triangles (in the first quadrant). For $i=1, \ldots, N$, the $i$-th triangle has one side on $\partial \Omega$, whose length is denoted by $a_i$. The corresponding
 height is an orthogonal segment passing through the origin, whose length is denoted by $h_i$. According to this notation, we have
$$
P(\Omega) = 4 \sum_{i=1}^N a_i, \quad |\Omega|= 2 \sum_{i=1}^N a_i h_i.
$$
The key point of the proof relies on the following elementary observation about the minimal width (see Fig. \ref{Figure4}):
\begin{equation}\label{cs}
\forall i , \quad \quad 2 h_i\geq \w(\Omega). 
\end{equation}
 \begin{figure}[h]                                             
\begin{center}
\includegraphics[height=6truecm]{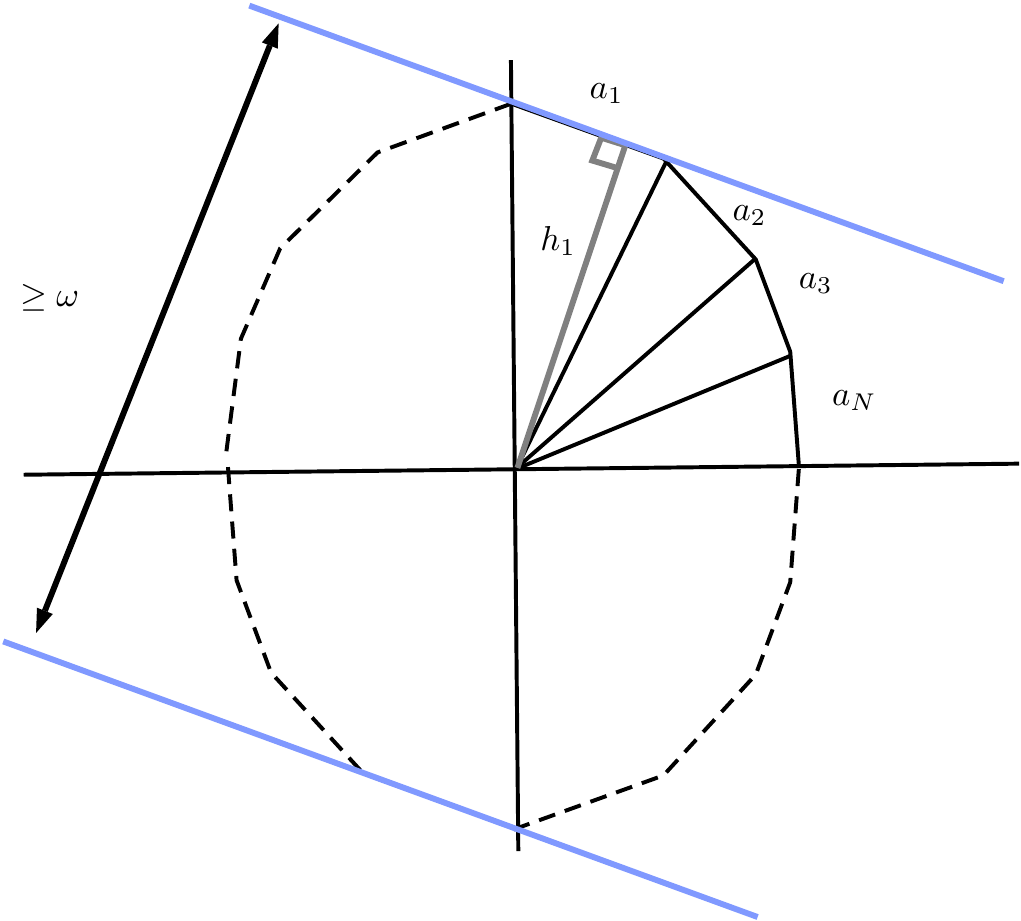}
\caption{\textit{Polygons with two orthogonal axis of symmetry.}}\label{Figure4}
\end{center}
\end{figure}
Thus we infer that
$$
|\Omega| \geq \sum_{i=1}^N a_i \w(\Omega) =\frac14  \w(\Omega) P(\Omega),
$$
that is, \eqref{inequG2}. This concludes the proof of \eqref{ineqConj}. Let us consider the equality cases: take a planar convex domain $\Omega$ with two orthogonal axis of symmetry, such that $P^2(\Omega) \mu_1(\Omega)=16 \pi^2$. Then, in view of \eqref{inf} and \eqref{ineqConj}, it must satisfy
$$
\mu_1(\Omega) = \pi^2 \frac{\w^2(\Omega)}{|\Omega|}, \quad P(\Omega)\w(\Omega) = 4 |\Omega|.
$$
The former, thanks to Lemma \ref{boundmu1}, implies that $\Omega$ is a rectangle, say $\Omega=[0,L]\times [0,\ell]$, $\ell\leq L$. For rectangles, the second equality is satisfied if and only if
$$
2(L+\ell) \ell = 4 L \ell,
$$
namely if and only if $L=\ell$ and $\Omega$ is a square. This concludes the proof.\end{proof}

\begin{remark}\label{centrally}The key point in the proof of Proposition \ref{orthogonal} is the inequality \eqref{cs}, which relates the distance of two supporting lines to the height of the triangles. The inequality holds true more in general for centrally symmetric convex sets, implying the validity of the conjecture \eqref{ineqConj} for these domains. This fact has been pointed out by K. Funano and A. Colesanti, while discussing on the problem.
\end{remark}

\section{The case of axis of symmetry forming a small angle}\label{sec-small}

This section contains two results: first we show that  we can restrict ourselves to the case in which the angle between two axis of symmetry is of the form $\pi/N$, $N\geq 3$; then we analyse the case of small angles of the form $\pi/N$ with $N\geq 5$, proving the validity of \eqref{ineqConj} in this framework. Note that the case $\pi/4$ has been treated in the previous section, while the case $\pi/3$ will be the object of the next section.

\begin{proposition}\label{axes}Let $\Omega\subset \R^2$ be any planar convex domain which admits two axis of symmetry with respective angle $\theta\in (0,2\pi)$. Then:
\begin{enumerate}
\item either $\theta = \alpha 2\pi$ with $\alpha \not \in \mathbb{Q}$ and $\Omega$ is a disk;
\item or $\theta = \alpha 2\pi$ with $\alpha \in \mathbb{Q}$ and $\Omega$ admits two (possibly other) axis of symmetry, with respective angle  $\frac{\pi}{N}$ with $N\geq 3$.
\end{enumerate}
\end{proposition}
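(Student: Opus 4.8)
The plan is to work with the group generated by the two reflections. Let $\sigma_1$ and $\sigma_2$ be the reflections of $\R^2$ across the two axes of symmetry of $\Omega$, which we may assume pass through the centroid of $\Omega$ (the intersection point is common to all axes since $\Omega$ is bounded and convex). The composition $\rho := \sigma_2 \circ \sigma_1$ is a rotation about that point through angle $2\theta$, and $\Omega$ is invariant under $\rho$. So the first reduction is: understanding the closed subgroup $G \subset O(2)$ generated by $\sigma_1,\sigma_2$ that fixes $\Omega$, and in particular the subgroup of rotations $G \cap SO(2)$, which is the closure of $\langle \rho \rangle$.

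First I would split on whether $\theta/(2\pi)$ is rational. If $\theta = \alpha \, 2\pi$ with $\alpha \notin \mathbb{Q}$, then $2\theta = \alpha\,4\pi$ is also an irrational multiple of $2\pi$, so $\langle \rho\rangle$ is dense in $SO(2)$; since the set of rotations fixing $\Omega$ is closed (the map $R \mapsto R\Omega$ is continuous in Hausdorff distance and $\Omega$ is compact), $\Omega$ is invariant under all rotations about its centroid, hence $\Omega$ is a disk — case (1). If instead $\alpha = p/q \in \mathbb{Q}$ in lowest terms, then $\rho$ has finite order $q$ (if $q$ is odd) or the rotation subgroup generated is cyclic of some order $m \geq 1$; in any case $G$ is a finite dihedral group $D_m$ for some $m \geq 1$ (it contains at least one reflection, and it contains the rotation $\rho \neq \mathrm{id}$ unless $\theta \in \pi\mathbb{Z}$, in which case the two axes coincide or are orthogonal and we already have $m \geq 2$ — but note the statement only wants $N \geq 3$, so I must handle the degenerate sub-case carefully).

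The key step is then: a finite dihedral group $D_m$ of symmetries contains $m$ reflections whose axes are equally spaced at consecutive angles $\pi/m$. If $m \geq 3$, pick two such adjacent axes; their angle is $\pi/m =: \pi/N$ with $N = m \geq 3$ and we are done. The remaining case is $m \le 2$, i.e. $G = D_1$ or $D_2$: here the rotation part is trivial or a single half-turn, so the two original axes are either equal, or orthogonal. Equal axes is excluded by hypothesis (they are two distinct axes, presumably, or $\theta \neq 0$). Orthogonal axes give $\theta = \pi/2 = \pi/N$ with $N = 2$ — but the proposition claims $N\ge 3$. The resolution, which I expect is intended, is that a convex body with two orthogonal axes of symmetry also has the central symmetry $-\mathrm{id}$ and hence... no, that does not create a new axis. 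So I suspect the actual statement being used downstream tolerates $N = 2$ via Proposition~\ref{orthogonal}, or the convention is that $N\ge 2$; I would state case (2) as ``$N \geq 2$'' and remark that $N$ being a power of $2$ (in particular $N=2$) is covered by the orthogonal case already treated, so effectively only $N = \pi/3$ and $N \geq 5$ remain, consistent with the surrounding text.

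The main obstacle is not really mathematical depth but bookkeeping: making the ``equally spaced axes'' statement precise requires recalling that in a dihedral group the product of reflections across lines at angle $\beta$ is rotation by $2\beta$, so a rotation of order $m$ together with one reflection forces reflections across $m$ lines spaced by $\pi/m$; and one must confirm that these symmetries genuinely fix $\Omega$ (they do, being compositions of symmetries of $\Omega$) and that adjacent axes are distinct lines when $m\ge 2$. One should also double-check the edge case $\theta$ an irrational multiple of $2\pi$ but such that $2\theta$ happens to be rational — impossible, since $2\theta$ rational multiple of $2\pi$ forces $\theta$ rational multiple of $2\pi$ — so the dichotomy on $\alpha$ is clean. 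I would organize the write-up as: (i) reduce axes to pass through the centroid; (ii) form $\rho$, dichotomy on rationality of $\alpha$; (iii) irrational case $\Rightarrow$ disk by density + closedness; (iv) rational case $\Rightarrow$ finite dihedral $D_m$, extract two adjacent reflection axes at angle $\pi/m$, and reconcile the small-$m$ cases with the orthogonal-axes discussion.
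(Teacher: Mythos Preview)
Your approach is essentially the same as the paper's: form the rotation $\rho=\sigma_2\circ\sigma_1$, split on whether $\theta/2\pi$ is rational, use density in the irrational case to force a disk, and in the rational case extract two axes at angle $\pi/N$ from the finite symmetry group. The only real difference is packaging: you invoke the dihedral group $D_m$ and read off its $m$ equally spaced reflection axes, whereas the paper works by hand---using B\'ezout to reduce to an angle $2\pi/q$ and then a short parity/case analysis on $q$ to land on $\pi/q$. Your dihedral formulation is a bit cleaner and avoids the even/odd case split; the paper's version is more elementary in that it never names the group.

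Your observation about the edge case is correct and worth keeping: when the two given axes are orthogonal one gets $m=2$, i.e.\ $N=2$, not $N\ge3$. The paper's statement is slightly imprecise on this point (its own argument also yields $N=q/2=2$ when $q=4$), and the surrounding text handles the orthogonal case separately via Proposition~\ref{orthogonal}. So your proposed resolution---state $N\ge2$ and note that $N=2$ is already covered---is exactly right and matches how the paper actually uses the proposition.
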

\begin{proof} The proof is divided into three steps. In the following, $\Omega$ is a planar convex set with two axis of symmetry $L_1$ and $L_2$, forming and angle $\theta\in (0,2\pi)$.

\emph{Step 1: invariance under rotation.} 
Since the composition of two symmetries produces the rotation of twice the angle, we infer the following two properties of $\Omega$:
\begin{itemize}
\item[i)] it is invariant under rotation of angle $2\theta$, denoted by $R_{2\theta}$;
\item[ii)] if $L$ is an axis of symmetry, then $R_{2\theta}(L)$ is an axis of symmetry, too.
\end{itemize}

By repeatedly applying (ii) to $L_1$ and $L_2$, we infer that $R_{2\theta}^k(L_1)$ and $R_{2\theta}^k(L_2)$, $k\in \mathbb N^*$, are axis of symmetry, forming with $L_1$ an angle of $2k\theta$ and $\theta + 2k\theta$, respectively. Therefore, for every $m\in \mathbb N^*$, there exist an axis of symmetry (either a rotation of $L_1$ or a rotation of $L_2$) forming an angle of $m\theta$ with $L_1$. In view of (i) we infer the set $\Omega$ is invariant under all the rotations $R_{2m \theta}$, $m\in \mathbb N^*$.

\emph{Step 2: case of $\theta$ non rational multiple of $2\pi$.} Assume that $\theta=\alpha2\pi$, for some $\alpha \in \mathbb R \setminus \mathbb Q$. Then the set 
$$
\{k\alpha2\pi + 2j\pi \quad : \quad k\in \mathbb{Z} \text{ and }  j \in \mathbb{Z}\}
$$
is dense in $\R$ (because a subgroup of $\R$ is either discrete or dense, and this one can not be discrete). In this case we deduce that $\Omega$ is invariant under a dense set of rotations, therefore $\Omega$ is a disk, which proves (1).

\emph{Step 3: case of $\theta$ rational multiple of $2\pi$.} Let now $\theta = \alpha 2 \pi$ with $\alpha \in \mathbb Q$, say $\alpha=\frac{p}{q}$ with $p$ and $q$ in $\mathbb Z$. Thanks to Bezout Lemma, we can find two integers $a,b\in \mathbb{Z}$ such that 
$$ap+bq=1.$$
Then we can write
$$a\theta=a \alpha 2 \pi = a\frac{p}{q}2\pi=\frac{1}{q}2\pi-b2\pi,$$
namely there exists a multiple of $\theta$ which is equal to $\frac{2\pi}{q}$ mod $2\pi$.
By Step~1, this implies that there exist two axis of symmetry with respective angle $\tilde{\theta}:=2\pi/q$, $q\in \mathbb N$. Let $L'$ and $L''$ denote the two axis. Without loss of generality, we may take $L'$ horizontal. If $q$ is even, (2) is proved with $N=q/2$. Let us assume that $q$ is odd, of the form $q=2h+1$, for some $h\in \mathbb N^*$. If $h$ is even, we apply (ii) $h/2$ times to the axis $L'$: the rotated axis $R_{2\tilde{\theta}}^{h/2}(L')$ is again an axis of symmetry, forming with $L'$ two supplementary angles:
$$
\frac{h}{2} (2 \tilde{\theta})= \frac{2 h}{2h+1} \pi \quad \hbox{and} \quad \pi -  \frac{h}{2} (2 \tilde{\theta}) = \frac{\pi}{2h+1} = \frac{\pi}{q}.
$$
The second angle concludes the proof of (2) with $N=q$.
The remaining case is when $\tilde{\theta}=2\pi/q$ with $q=2h+1$ and $h$ odd. Here we apply (ii) $(h-1)/2$ times to $L''$: the rotated axis $R_{2\tilde{\theta}}^{(h-1)/2}(L'')$ is again an axis of symmetry, forming with $L'$ two supplementary angles:
$$
\tilde{\theta} + \frac{h-1}{2}(2\tilde{\theta}) = \frac{2h}{2h+1} \pi  \quad \hbox{and} \quad \pi - \left[\tilde{\theta} + \frac{h-1}{2}(2\tilde{\theta}) \right]= \frac{\pi}{q}, 
$$
giving (2) with $N=q$. This concludes the proof.
\end{proof}

Let us now prove the main result when the angle between the two axis of symmetry is small. In view of the previous proposition, this covers almost all the possible cases.

\begin{proposition}\label{sym5} Let $\Omega$ be a planar convex domain with two axis of symmetry with angle $\theta= \frac{\pi}{N}$ with $N\geq 5$. Then the inequality \eqref{ineqConj} holds true strictly. 
\end{proposition}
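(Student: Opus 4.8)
\medskip
\noindent\emph{Proof strategy.} The plan is to combine Szeg\H{o}--Weinberger's inequality \eqref{SzegoW} with a \emph{reverse} isoperimetric inequality for convex bodies carrying the full symmetry of a regular $N$-gon, and then to conclude by the very computation already carried out in the proof of Proposition~\ref{polygon}. Since $\Omega$ is convex and has two axes of symmetry forming an angle $\pi/N$, it is invariant under the dihedral group $D_N$ generated by the two associated reflections. I claim it then suffices to establish
\begin{equation}\label{revisopsym}
P(\Omega)^2\ \le\ 4N\tan\!\Big(\frac{\pi}{N}\Big)\,|\Omega|.
\end{equation}
Granting \eqref{revisopsym}, inequality \eqref{SzegoW} gives
$$P^2(\Omega)\,\mu_1(\Omega)\ \le\ 4N\tan\!\Big(\frac{\pi}{N}\Big)\,|\Omega|\,\mu_1(\Omega)\ \le\ 4N\tan\!\Big(\frac{\pi}{N}\Big)\,\pi\,(j'_{1,1})^2,$$
and $4N\tan(\pi/N)\,\pi\,(j'_{1,1})^2<16\pi^2$ is equivalent to $N\tan(\pi/N)<4\pi/(j'_{1,1})^2$, which holds strictly for every $N\ge 5$ exactly as in Proposition~\ref{polygon} (recall $4\pi/(j'_{1,1})^2>3.706$, while $x\mapsto x\tan(\pi/x)$ is decreasing on $[5,+\infty)$ and $5\tan(\pi/5)<3.633$). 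This yields the desired strict inequality.

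To prove \eqref{revisopsym} I would cut $\Omega$ along the $N$ reflection axes of $D_N$. These $N$ lines through the centre $O$ of $\Omega$ (the unique fixed point of $D_N$) divide the plane into $2N$ congruent sectors of opening $\pi/N$, and split $\Omega$ into $2N$ congruent convex pieces; calling $W$ one of them and $L:=\mathcal H^1(\partial\Omega\cap\Sigma)$ the length of $\partial\Omega$ inside the corresponding sector $\Sigma$, one has $|\Omega|=2N\,|W|$ and $P(\Omega)=2N\,L$. Let $A$ and $B$ be the two points where the axes bounding $\Sigma$ meet $\partial\Omega$, at distances $a$ and $b$ from $O$. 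Convexity together with reflection symmetry across each of these axes forces the line through $A$ orthogonal to the first axis, and the line through $B$ orthogonal to the second one, to be supporting lines of $\Omega$; hence $W$ is contained in the right-angled quadrilateral $Q=OAP^{*}B$ cut out of $\Sigma$ by these two supporting lines (with $P^{*}$ their intersection), while trivially $W\supseteq\operatorname{conv}\{O,A,B\}$. An elementary trigonometric computation gives $|AP^{*}|+|P^{*}B|=(a+b)\tan(\pi/2N)$ and $|\operatorname{conv}\{O,A,B\}|=\tfrac12 ab\sin(\pi/N)$, and since $\partial\Omega\cap\Sigma$ is a convex arc joining $A$ to $B$ inside $Q$, monotonicity of the perimeter for convex sets yields $L\le|AP^{*}|+|P^{*}B|$.

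There remains to control the ratio $a/b$. The $D_N$-orbit of $A$ consists of $N$ points lying on the circle of radius $a$ about $O$, and their convex hull --- a regular $N$-gon of inradius $a\cos(\pi/N)$ --- is contained in $\Omega$; hence $b\ge a\cos(\pi/N)$, and symmetrically $a\ge b\cos(\pi/N)$, so that $\tfrac ab+\tfrac ba\le\cos(\pi/N)+\sec(\pi/N)$. Plugging the three bounds into
$$\frac{P(\Omega)^2}{|\Omega|}=\frac{2N\,L^2}{|W|}\ \le\ 2N\,\frac{(a+b)^2\tan^2(\pi/2N)}{\tfrac12 ab\sin(\pi/N)}$$
and using the identity $\tan(\pi/2N)=\sin(\pi/N)/(1+\cos(\pi/N))$, all the trigonometry collapses to exactly the right-hand side of \eqref{revisopsym}; tracking the equality cases shows that \eqref{revisopsym} is an equality only for regular $N$-gons. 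I expect the main obstacle to be the geometric inequality \eqref{revisopsym}, and within it the two slightly delicate points: the inclusion $W\subseteq Q$ --- i.e.\ the fact that convexity plus reflection symmetry force the supporting line at a point lying on a symmetry axis to be orthogonal to that axis --- and the estimate $L\le|AP^{*}|+|P^{*}B|$ obtained from monotonicity of perimeter; everything else is the bookkeeping trigonometry above together with the numerical verification already present in Proposition~\ref{polygon}.
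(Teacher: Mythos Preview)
Your proposal is correct and follows essentially the same route as the paper: both arguments combine Szeg\H{o}--Weinberger with the reverse isoperimetric inequality $P^2(\Omega)\le 4N\tan(\pi/N)\,|\Omega|$, obtained by bounding the arc in one sector from above by the broken line $AP^*B$ along the two supporting lines orthogonal to the axes, and the area from below by the triangle $OAB$; the numerical conclusion is then exactly the computation of Proposition~\ref{polygon}. The only cosmetic difference is the parametrization: the paper fixes $|OA|=1$ and optimizes over the angle $\alpha=\widehat{OAB}\in[\tfrac{\pi}{2}-\theta,\tfrac{\pi}{2}]$, whereas you keep both radii $a,b$ and constrain their ratio via the $D_N$-orbit argument --- these are equivalent (the paper's range for $\alpha$ corresponds precisely to your bound $\cos(\pi/N)\le a/b\le\sec(\pi/N)$), and both lead to the identical bound $4N\tan(\pi/N)$.
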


\begin{proof} The strategy of the proof is to compare $\Omega$ with a disk by using the Szeg\H{o}-Weinberger's inequality  \eqref{SzegoW}:
\begin{eqnarray}
\mu_1(\Omega)\leq \frac{\pi(j'_{1,1})^2}{|\Omega|}. \label{SSe}
\end{eqnarray}
In virtue of \eqref{SSe}, the proposition will be proved if we are able to establish 
$$\frac{P^2(\Omega)}{|\Omega|}\leq \frac{16\pi}{(j'_{1,1})^2}.$$
Since $(j'_{1,1})^2<3.5$ and $16\pi > 50$, it is enough to prove that
\begin{equation}\label{enough}
\frac{P^2(\Omega)}{|\Omega|}<14.7.
\end{equation}
For that purpose we solve some kind of reverse isoperimetric inequality by maximizing the ratio $P^2(\Omega)/|\Omega|$ among convex sets lying in
the sector defined by the two axis of symmetry. More precisely,  we will bound the perimeter from above and the area from below. Let us assume without loss of generality that the two axis cross at the origin $O$, that the first axis of symmetry is the line $\{x=0\}$, and that the point $A=(0,1)$ belongs to $\partial \Omega$. By assumption, the second axis of symmetry forms the angle $\theta$ with $\{x=0\}$. We denote by $B$ the point on $\partial \Omega$ that belongs to this second axis, and by $\alpha$ the angle $\widehat{OAB}$ as in Fig. \ref{Figure5}.

\begin{figure}[h]                                             
\begin{center}
\includegraphics{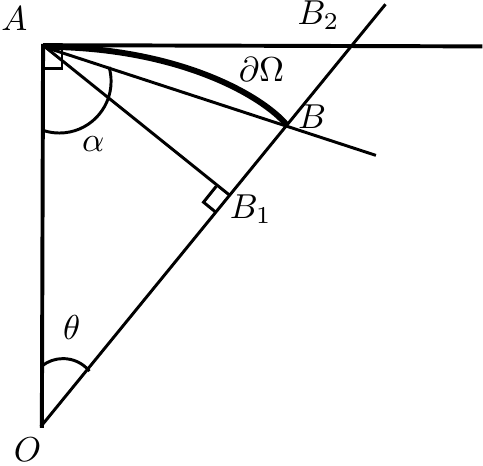}
\caption{\textit{Proof of Proposition \ref{sym5}.}}\label{Figure5}
\end{center}
\end{figure}

Notice that, by convexity, $B$ must lie between the two points $B_1$ and $B_2$ as in Fig. \ref{Figure5}. which provides the bound
$$\frac{\pi}{2}-\theta\leq \alpha \leq \frac{\pi}{2}.$$
We denote by $S$ the positive sector delimited by the half-lines $(OA)$ and $(OB)$, and we consider the restricted sets 
$$\Omega_0:=\Omega\cap S  \quad \text{ and } \quad  \Gamma_0:= \partial \Omega \cap S.$$
By symmetry we know that 
$$|\Omega|=2N |\Omega_0| \quad \text{ and } \quad P(\Omega)= 2 N \mathcal{H}^1(\Gamma_0).$$

The remaining part of the proof is divided into two steps, in which we provide a lower bound for the area and an upper bound for the perimeter, respectively. 

 \emph{Step 1: lower bound for the area.}  To estimate   $|\Omega_0|$ we will use the fact that, by convexity, it has bigger area than the triangle $OAB$, which turns out to be  equal to $\frac{1}{2}x_B$, being $x_B$ the $x$-coordinate of the point $B$. The equation of the line $(AB)$ is
$$
\cos(\alpha) x +\sin(\alpha) y- \sin(\alpha)=0,
$$
and $B$ also lies on the line $(OB)$ whose equation is $y=\frac{x}{\tan(\theta)}$. We deduce that 
$$x_B=\frac{\sin(\alpha)\sin(\theta)}{\sin(\theta+\alpha)} \quad \quad  \text{ and }  \quad \quad y_B=\frac{\sin(\alpha)\cos(\theta)}{\sin(\theta+\alpha)}.$$
Therefore,
\begin{eqnarray}
|\Omega|=  N|\Omega_0|\geq N x_B = N  \frac{\sin(\alpha)\sin(\theta)}{\sin(\theta+\alpha)} . \label{lowerbound}
\end{eqnarray}

\emph{Step 2: upper bound for the perimeter.}  Notice that by symmetry, the  perpendicular to the line $(OB)$ passing through $B$ is the tangent line to $\partial \Omega$ at $B$. Similarly, the line perpendicular to $\{x=0\}$ through $A$ is the tangent to $\partial \Omega$ at $A$. These two tangents meet at a point $H$ and, by convexity, $\partial \Omega$ stays below the two. This implies that we can estimate the length of $\Gamma_0$ as $\mathcal{H}^1(\Gamma_0)\leq \overline{AH}+\overline{HB}$, see also Fig. \ref{Figure6}.

\begin{figure}[h]                                             
\begin{center}
\includegraphics{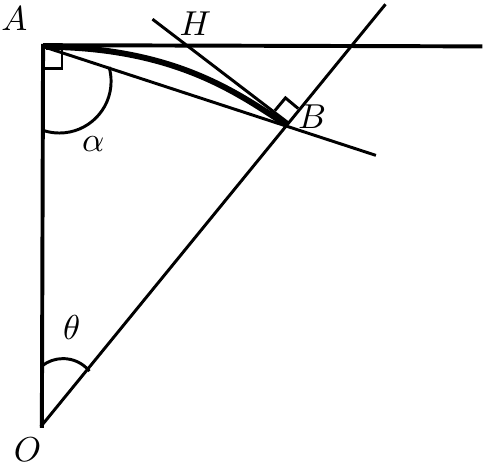}
\caption{\textit{Proof of Proposition \ref{sym5}.}}\label{Figure6}
\end{center}
\end{figure}

Let us determine the $x$-coordinate $x_H$ of $H$ (the $y$-coordinate is by construction 1). The equation of the line $(HB)$ is given by
$$\tan(\theta)(x-x_B)+(y-y_B)=0.$$
Therefore
$$x_H=\frac{y_B-1}{\tan(\theta)}+x_B=\frac{\sin(\alpha)}{\sin(\theta)\sin(\theta+\alpha)}-\frac{\cos(\theta)}{\sin(\theta)} =-\cot(\alpha +\theta),$$
and we deduce that 
$$\overline{AH} = x_H=-\cot(\alpha+\theta).$$
Next,

\begin{eqnarray}
\overline{BH}^2&=&(x_H-x_B)^2+(1-y_B)^2  \notag \\
&=&\frac{1}{\sin^2(\alpha+\theta)}\Big(\cos(\alpha+\theta)+\sin(\alpha)\sin(\theta)\Big)^2+\Big(\sin(\alpha+\theta)-\sin(\alpha)\cos(\theta)\Big)^2  \notag \\
&=& \frac{1}{\sin^2(\alpha+\theta)}\cos^2(\alpha)  \notag
\end{eqnarray}
 from which we deduce 
 $$\overline{BH}=\frac{\cos(\alpha)}{\sin(\theta+\alpha)}.$$
This means that 
\begin{eqnarray}
\mathcal{H}^1(\Gamma_0)\leq \frac{\cos(\alpha)-\cos(\alpha+\theta)}{\sin(\alpha+\theta)}=\frac{2\sin(\frac{\theta}{2})\sin(\alpha+\frac{\theta}{2})}{\sin(\alpha+\theta)}, \label{upperbound}
\end{eqnarray}
and $P(\Omega)={2}N\mathcal{H}^1(\Gamma_0)$. By combining \eqref{lowerbound} and \eqref{upperbound} we get
\begin{eqnarray}
\frac{P^2(\Omega)}{|\Omega|}
 \leq N\frac{{16}\sin^2(\frac{\theta}{2})\sin^2(\alpha+\frac{\theta}{2})}{\sin(\alpha+\theta)\sin(\alpha)\sin(\theta)}={8}N\tan\left(\frac{\theta}{2}\right)\frac{\sin^2(\alpha+\frac{\theta}{2})}{\sin(\alpha+\theta)\sin(\alpha)}. \notag\label{maximiz}
\end{eqnarray}
Now we maximize in the variable $\alpha \in [\frac{\pi}{2}-\theta , \frac{\pi}{2}]$ the function
$$g:\alpha  \mapsto \frac{\sin^2(\alpha+\frac{\theta}{2})}{\sin(\alpha+\theta)\sin(\alpha)}.$$
For this purpose we notice that $g$ can be rewritten in the following form
$$g(\alpha)=\frac{1-\cos(2\alpha+\theta)}{\cos(\theta)-\cos(2\alpha+\theta)},$$
from which we easily see that $g$ is non increasing on $[ \frac{\pi}{2}-\theta, \frac{\pi}{2}-\frac{\theta}{2}]$  and  non decreasing on $[ \frac{\pi}{2}-\frac{\theta}{2}, \frac{\pi}{2}]$. This implies that
\begin{eqnarray}
\max_{\alpha \in[\frac{\pi}{2}-\theta , \frac{\pi}{2}] }g(\alpha)= g\left(\frac{\pi}{2}\right)=g\left(\frac{\pi}{2}-\theta\right)=\frac{\cos^2(\theta/2)}{\cos(\theta)}, \label{claim}
\end{eqnarray}
and we deduce that 
\begin{eqnarray}
\frac{P^2(\Omega)}{|\Omega|} \leq {8}N \tan\left(\theta/2\right) \frac{\cos^2(\theta/2)}{\cos(\theta)}={4}N\tan(\theta)={4}N\tan({\pi/N}). \label{boundD}
\end{eqnarray}
Since $4N\tan\theta$ equals the ratio $P^2(\Omega)/|\Omega|$ for both triangles $OAB_1$ and $OAB_2$ (see Figure \ref{Figure5}) we have actually proved that these two triangles solve
the reverse isoperimetric problem in the sector $S$.

Next, we study the function
$$h:t\mapsto {4}t\tan({\pi/t}).$$
It is easily seen that $h(t)$ is decreasing in $[{2},+\infty[$. Indeed, a direct computation reveals that $h'(t)$ has the same sign as 
\begin{eqnarray}
\sin\left(\frac{\pi}{t}\right)\cos\left(\frac{\pi}{t}\right)-\frac{\pi}{t}. \label{hprime}
\end{eqnarray}
Then for $t\geq {2}$ we have $0\leq \cos(\frac{\pi}{t})\leq 1$ and $0\leq \sin(\frac{\pi}{t}) \leq \frac{\pi}{t}$
from which we deduce that the expression in \eqref{hprime} is non negative, so that $h'(t)\leq 0$ for $t\geq {2}$.
In particular, for every $N\geq 5$, $h(N)\leq h(5)$, thus \eqref{boundD} gives
$$
\frac{P^2(\Omega)}{|\Omega|} \leq h(5) = 20 \tan(\pi/5) < 14.54.
$$
This concludes the proof thanks to \eqref{enough}.
\end{proof}

\section{The case of the axis of symmetry of the equilateral triangle} \label{sectionIII}

In this section we analyze the maximization over the convex sets with the same axis of symmetry of the equilateral triangle.

\begin{proposition}\label{prop-3sym}
Let $\Omega$ be a planar convex domain with two axis of symmetry with angle $\pi /3$. Then the inequality \eqref{ineqConj} holds true. The equality occurs for all and only equilateral triangles.
\end{proposition}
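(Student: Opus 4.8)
\textbf{Proof strategy for Proposition \ref{prop-3sym}.} The plan is to parametrize the convex domains $\Omega$ having the three axis of symmetry of the equilateral triangle by a single scalar parameter $a \in [0,1/3]$ that measures how far $\Omega$ is from being equilateral (with $a=1/3$, say, giving the equilateral triangle itself, and $a$ small corresponding to domains obtained from the equilateral triangle by ``cutting the corners''), after which the inequality \eqref{ineqConj} becomes an estimate to be established uniformly in $a$. First I would reduce, exactly as in the proof of Proposition \ref{sym5}, to estimating $P^2(\Omega)/|\Omega|$ when that is feasible, but here the crude comparison with the disk via Szeg\H{o}--Weinberger is \emph{not} sharp enough near the equilateral triangle, so a more delicate argument is needed precisely in the regime of small $a$.

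The proof splits according to the value of $a$. For the intermediate range $a \in [1/4,1/3]$ the idea is to combine Szeg\H{o}--Weinberger's inequality \eqref{SzegoW} with a reverse isoperimetric inequality bounding $P^2(\Omega)$ from above in terms of $|\Omega|$ for domains in this family: since these shapes are still reasonably ``fat'', one gets $P^2(\Omega)/|\Omega|$ strictly below the threshold $16\pi/(j'_{1,1})^2$ and concludes with strict inequality. The harder range is $a \in [0,1/4]$, i.e.\ shapes close to the equilateral triangle. Here the strategy is variational: use as a test function in the Rayleigh quotient \eqref{mu1} a suitable convex combination of (extensions of) the first nontrivial Neumann eigenfunctions of two well-chosen equilateral triangles — one circumscribed-type and one inscribed-type relative to $\Omega$ — exploiting that the eigenfunctions of equilateral triangles are known explicitly. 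One must check the zero-average constraint (by choosing the coefficient of the combination appropriately, via a continuity/intermediate-value argument as in Lemma \ref{boundmu1}), bound the resulting numerator from above and denominator from below in terms of $a$, and verify that the resulting one-parameter inequality holds. After reducing to finitely many inequalities in $a$ (and possibly an auxiliary angular parameter), these are checked numerically; the genuinely technical estimates are deferred to the Appendix.

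The main obstacle is clearly the small-$a$ regime: the bound \eqref{ineqConj} is an equality at $a=1/3$ for the equilateral triangle and — one expects — the quantity $P^2(\Omega)\mu_1(\Omega)$ is close to $16\pi^2$ throughout a neighbourhood of the equilateral triangle, so there is essentially no slack. This forces the test function to be chosen with care (a naive single-triangle eigenfunction will not give a good enough upper bound on $\mu_1(\Omega)$), the geometric quantities $P(\Omega)$, $|\Omega|$ and the various integrals of the test function must all be expanded accurately in $a$, and the final inequality is tight enough that it cannot be closed by soft arguments alone, whence the recourse to explicit (computer-assisted) verification on the reduced finite set of configurations. Finally, to pin down the equality cases one traces back through the chain of inequalities: equality forces the test function to be a genuine eigenfunction and the geometric estimates to be saturated, which happens only for the equilateral triangle.
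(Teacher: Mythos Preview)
Your overall strategy matches the paper's approach closely: a one-parameter family $\mathcal C_a$, $a\in[0,1/3]$, with the range $a\in[1/4,1/3]$ handled by Szeg\H{o}--Weinberger combined with a reverse isoperimetric inequality, and the delicate range $a\in[0,1/4]$ handled by a test function built as a convex combination of explicit eigenfunctions of two equilateral triangles, followed by a computer-assisted check on finitely many configurations.

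Two points to correct. First, your parametrization is internally inconsistent: you write that $a=1/3$ gives the equilateral triangle and small $a$ corresponds to ``cutting the corners'', yet then call $a\in[0,1/4]$ the range ``close to the equilateral triangle'' and say equality holds at $a=1/3$. In the paper's normalization it is $a=0$ that corresponds to the equilateral triangle (so the hard range $[0,1/4]$ is indeed the near-equilateral regime, and equality in \eqref{ineqConj} occurs at $a=0$); you should fix the labeling. Second, the zero-average constraint for the test function does not require an intermediate-value argument: both eigenfunctions $u_1$ and $\hat u_1$ are odd in $x$, and $\Omega$ is symmetric about the vertical axis, so any combination $v_k=(1-k)u_1+k\hat u_1$ automatically has zero mean on $\Omega$. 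The parameter $k$ is instead used to tune the quality of the upper bound in different sub-ranges of $a$ (the paper uses $k=0$, $0.06$, $0.12$ on three subintervals).
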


\begin{proof} The proof is detailed in the next pages and occupies the whole section. Let us summarize it here: in Lemma \ref{Ca2} we provide a representation result, by dividing the class of domains under study into a 1-parameter family $\mathcal C_a$, with $a$ varying in the interval $[0,1/3]$, being $a=0$ associated to all and only the equilateral triangles. Then, in \S \ref{ss1} we prove the statement for $a\in [0,1/4]$. Finally, in \S \ref{ss2} we treat the case $a\in [1/4, 1/3]$.
\end{proof}

\subsection*{The family $\mathcal C_a$}
We denote by $T$ the equilateral triangle with side 1 and vertexes at $(\pm 1/2,0)$ and $(0,\sqrt{3}/2)$. For $a\in [0,1/2]$ we denote by $\hat{T}_a$ the equilateral triangle bounded by the lines
$$
y=\frac{\sqrt{3}}{2}(1-a), \quad y= \pm \sqrt{3}x - \sqrt{3} \left(\frac12 - a\right).
$$
These triangles share the same axis of symmetry of $T$, but have the horizontal side on the top instead of the bottom. For the extremal values of $a$, namely $a=0, 1/2$, $\hat{T}_a$ is either circumscribed to $T$ or inscribed into it. We denote by $\widehat{T}$ the inscribed equilateral  triangle $\hat{T}_{1/2}$.
For the intermediate values of $a$, namely $0<a<1/2$, $\hat{T}_a$ crosses $T$. We define the intersection as 
\begin{equation}\label{Oa}
\Omega_a:= \hat{T}_a \cap T
\end{equation} 
that is, the hexagon with vertexes
$$
\left(\pm \frac{a}{2}; \frac{\sqrt{3}}{2}(1-a)\right), \quad 
\left(\pm \left(\frac12 -a\right); 0\right), \quad 
\left(\pm \frac12 (1-a); \frac{\sqrt{3}}{2}a\right).
$$
By connecting the midpoints of the sides $\Omega_a$, we obtain an hexagon, denoted by $H_a$, with vertexes
$$
\left(0; \frac{\sqrt{3}}{2}(1-a)\right), \quad \left( \pm \frac14, \frac{\sqrt{3}}{4}\right), \quad 
\left(\pm \left(\frac12 -\frac34 a\right); \frac{\sqrt{3}}{4} a\right), \quad 
\left(0,0\right).
$$
These domains are represented in Fig. \ref{fig-domains}. 

\begin{figure}[h]                                             
\begin{center}                                                
{\includegraphics[height=3truecm] {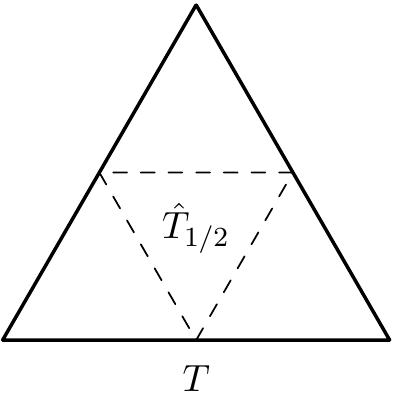}}\quad                                              
{\includegraphics[height=3truecm] {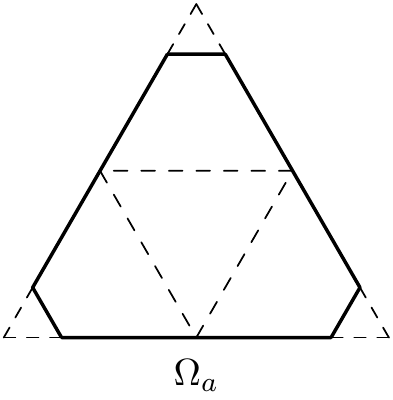}}\quad                                               
{\includegraphics[height=3truecm] {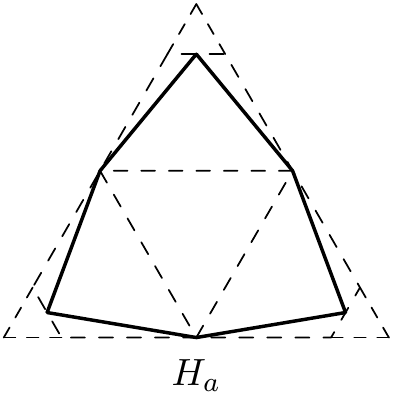}}                                               
\end{center}                                                  
\caption{\textit{The equilateral triangles $T$ and $\hat{T}_{1/2}$, and the hexagons $\Omega_a$ and $H_a$ for $a= 0.15$.}}\label{fig-domains}
\end{figure}

We are now in a position to introduce the subfamily $\mathcal C_a$.
\begin{definition} Let $a\in [0,1/2]$. We say that $\Omega\in \mathcal C_a$ if the following conditions hold:
\begin{itemize}
\item[i)] $\Omega$ is convex and has the same axis of symmetry of $T$;
\item[ii)] $H_a \subset \Omega\subset \Omega_a$.
\end{itemize}
\end{definition}

A priori, $a\in [0,1/2]$. However, as we prove in the next lemma, the interval $[0,1/3]$ is enough to describe all the domains under study.
\begin{lemma}\label{Ca2}
For every convex set $\Omega$ with the same axis of symmetry of the equilateral triangle, there exists $a\in {[0,1/3]}$ such that, up to a rigid motion and a homothety, $\Omega$ belongs to $\mathcal C_a$.
\end{lemma}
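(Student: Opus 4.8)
\textbf{Proof strategy for Lemma \ref{Ca2}.}
The plan is to reduce an arbitrary convex set $\Omega$ with the axes of symmetry of $T$ (i.e.\ the three lines through the origin at angles $\pi/6 + k\pi/3$, equivalently the dihedral group $D_3$ acting on $\Omega$) to the normalized family $\mathcal C_a$ by a carefully chosen rigid motion and homothety. First I would exploit the $D_3$-symmetry: up to a rotation we may assume the vertical line $\{x=0\}$ is an axis of symmetry, and moreover that $\Omega$ is invariant under the rotations of angle $2\pi/3$ about its center of symmetry, which we place at the origin. Thus $\Omega$ is determined by its intersection with one $60^\circ$ sector, and its boundary meets each axis of symmetry in a well-defined pair of antipodal points. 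The key geometric quantities are the ``distance to a vertex direction'' versus the ``distance to an edge-midpoint direction'': let $r$ be the distance from the origin to $\partial\Omega$ along the direction pointing to a vertex of $T$ (say towards $(0,\sqrt3/2)$ after suitable rotation), and let $s$ be the distance along the perpendicular-bisector direction pointing to the midpoint of the opposite side. By symmetry these are the only two free ``radii'', and scaling lets us normalize one of them (say fix the vertex of $T$ at $(0,\sqrt3/2)$ on $\partial\Omega$, matching the definition of $T$).

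Next I would show how $a$ is read off from $\Omega$. The circumscribed triangle $\hat T_a$ is pinned by the condition that its horizontal top side passes through the topmost point of $\Omega$ on the axis $\{x=0\}$; this fixes the line $y=\tfrac{\sqrt3}{2}(1-a)$ and hence $a$, giving $\Omega\subset \hat T_a$ by convexity together with the fact that $\hat T_a$ is the smallest triangle (in this symmetry class, with horizontal top edge) containing $\Omega$ — indeed, each edge of $\hat T_a$ is a supporting line of $\Omega$ at the relevant extreme point. Simultaneously $\Omega\subset T$ follows from normalizing so that the vertices of $T$ lie on $\partial\Omega$ and using that $T$ is the convex hull of those three boundary points (this is where convexity is used again). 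Combining, $\Omega\subset \hat T_a\cap T=\Omega_a$. For the lower inclusion $H_a\subset\Omega$: $H_a$ was defined as the hexagon joining the midpoints of the sides of $\Omega_a$, and each vertex of $H_a$ is the midpoint of a segment whose two endpoints (the vertices of $\Omega_a$) are either vertices of $T$ or points forced to lie on $\partial\Omega$ by the circumscription condition on $\hat T_a$; since all such endpoints lie in $\overline\Omega$ and $\Omega$ is convex, $H_a\subset\Omega$. This gives $\Omega\in\mathcal C_a$.

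Finally I would pin down the range of $a$. A priori the construction of $\hat T_a$ and $\Omega_a$ makes sense for $a\in[0,1/2]$, but I claim $a\le 1/3$ for domains arising this way. The point is that $a$ and the ``edge radius'' $s$ are linked: as $a$ grows, the top vertex $(0,\tfrac{\sqrt3}{2}(1-a))$ of $\Omega_a$ descends, while the normalization keeps the $T$-vertices on $\partial\Omega$; requiring that a genuine convex $\Omega$ fit between $H_a$ and $\Omega_a$ and still pass through the vertices of $T$ forces $\tfrac{\sqrt3}{2}(1-a)\ge \tfrac{\sqrt3}{4}$ is too weak — rather, the binding constraint is that the three ``corner'' vertices of $\Omega_a$ (the points $(\pm(\tfrac12-a),0)$ and their rotations) must not cross past the midpoints of the sides of $T$, equivalently $\tfrac12-a\ge$ the relevant threshold; working this out gives exactly $a\le 1/3$, with $a=1/3$ the degenerate case where $\Omega_a$ and $H_a$ coincide with a fixed smaller equilateral triangle. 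Alternatively, and more cleanly: swapping the roles of the two triangles (using $\hat T$ circumscribed ``the other way up'') sends $a\mapsto$ some $a'$, and the two descriptions agree on the overlap, so every domain is captured with parameter in $[0,1/3]$; I would make this symmetry-of-the-parametrization argument precise.

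\textbf{Main obstacle.} The delicate point is not the inclusions $H_a\subset\Omega\subset\Omega_a$ (those are soft convexity arguments once the normalization is fixed) but rather showing that the normalization can always be achieved \emph{with $a$ landing in $[0,1/3]$ rather than merely $[0,1/2]$}, i.e.\ that the apparently larger parameter interval is redundant. This requires identifying the correct extremal configuration and checking that no convex domain in the symmetry class forces $a>1/3$ — equivalently, a symmetry/duality between the two ways of sandwiching $\Omega$ between an inscribed and a circumscribed equilateral triangle. I expect this bookkeeping (and the explicit computation of which boundary points are forced to lie on $\partial\Omega$) to be the technical heart of the lemma.
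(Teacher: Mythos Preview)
Your overall strategy---normalize, sandwich $H_a\subset\Omega\subset\Omega_a$, then use a duality to halve the parameter range---is exactly the paper's. But two of your ``soft convexity'' steps are pointed the wrong way.

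\textbf{The normalization.} You scale so that the \emph{vertices} of $T$ lie on $\partial\Omega$ and then claim $\Omega\subset T$ ``using that $T$ is the convex hull of those three boundary points''. Convexity gives the opposite inclusion: three points on $\partial\Omega$ force their convex hull $T$ to lie \emph{inside} $\Omega$. The correct normalization is dual to yours: scale so that the \emph{sides} of $T$ are supporting lines of $\Omega$ (equivalently, $\Omega$ is tangent to $y=0$ from above). Then the $D_3$-symmetry makes all three sides of $T$ support $\Omega$, giving $\Omega\subset T$, and the tangent points are forced (again by symmetry) to be the side-midpoints of $T$. This is the same supporting-line reasoning you correctly used for $\hat T_a$; you should apply it to $T$ as well.

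\textbf{The inclusion $H_a\subset\Omega$.} You argue that each vertex of $H_a$ is the midpoint of a side of $\Omega_a$ whose endpoints (the vertices of $\Omega_a$) lie in $\overline\Omega$. But the vertices of $\Omega_a$ are in general \emph{not} in $\Omega$ (take $\Omega=H_a$ itself), and for $a>0$ none of them is a vertex of $T$. The correct argument is more direct: the six vertices of $H_a$ are precisely the side-midpoints of $T$ and of $\hat T_a$, i.e.\ the tangent points just identified; hence they lie on $\partial\Omega$, and convexity of $\Omega$ gives $H_a\subset\Omega$.

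Your instinct on the range of $a$ is right, and the ``cleaner'' alternative you mention is exactly what the paper does: swapping the roles of $T$ and $\hat T_a$ and rescaling by $1/(2-3a)$ sends $a\mapsto(1-2a)/(2-3a)$, which maps $[1/3,1/2]$ bijectively onto $[0,1/3]$. Your first attempt via a direct extremal constraint does not work; nothing prevents a convex $\Omega$ from landing in $\mathcal C_a$ with $a\in(1/3,1/2]$ \emph{before} the swap is applied.
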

\begin{proof}
Let $\Omega$ be as in the assumption. Throughout the proof, with a slight abuse of notation, we will use the same letter for the class of shapes that are obtained by a rigid motion or a homothecy by $\Omega$. Without loss of generality, we may assume that $\Omega$ has the same axis of symmetry of $T$, it lies into the half-plane $y\geq 0$, and it is tangent to the horizontal line $y=0$, as it occurs for $T$. By combining the symmetry assumption on $\Omega$ with the (sharp) bound $y\geq 0$ of its points, we infer that $\Omega$ is contained into $T$ and touches the three segments of $\partial T$ at the midpoints. In particular, by convexity, the triangle $\hat{T}_{1/2}$ is contained into $\Omega$.

Let now $r$ be the horizontal line tangent to $\Omega$ from above. In view of the inclusions $\hat{T}_{1/2}\subset \Omega \subset T$, we infer that $r$ is of the form $y=(1-a)\sqrt{3}/2$ for some $a\in [0,1/2]$. Arguing as above, we infer that $\Omega$ is contained into the equilateral triangle $\hat{T}_a$ and touches the three segments of $\partial \hat{T}_a$ at the midpoints. All in all, we infer that $\Omega$ is contained into $T\cap \hat{T}_a=\Omega_a$ and that it touches $\partial \Omega_a$ at the midpoints of the boundary segments, which by definition are the vertexes of $H_a$. By convexity, $H_a\subset \Omega$. In other words $\Omega\in \mathcal C_a$.

In order to conclude the proof, we need to show that $a$ can be taken in $[0,1/3]$ instead of $[0,1/2]$. To this aim we use a very simple trick, which, roughly speaking, consists in changing the role of $T$ and $\hat{T}$. The class $\mathcal C_a$ is constructed starting from the hexagon $\Omega_a$, which is the intersection between $T$ and $\hat{T}_a$. The parameter $a$ is nothing but the length of the portions of $\partial \hat{T}_a$ included into $T$. The same hexagon can be described in terms of the length, say $b$, of the portions of $\partial T$ included into $\hat{T}_a$. In other words, changing the role of $T$ and $\hat{T}_a$, we may write $\Omega_a$ as an $\Omega_b$, for $b=1-2a$. If a shape $\Omega$ belongs to the class $\mathcal C_a$ associated to $T$, then the rescaled shape $(1/(2-3a))\Omega$ belongs to $\mathcal C_{(1-2a)/(2-3a)}$ associated to the triangle $(1/(2-3a))\hat{T}_a$. The triangle $(1/(2-3a))\hat{T}_a$ is equilateral and has sides of length 1. For $a\in [1/3, 1/2]$, the parameter $(1-2a)/(2-3a)$ runs from $0$ to $1/3$. This concludes the proof.
\end{proof}

\subsection*{{Description of shapes in $\mathcal C_a$}}
In this paragraph we give some definitions and properties to represent the symmetric shapes of $\mathcal C_a$. 

We start with a useful formula to make integrations over symmetric sets. Let  $\ell_0$ denote the vertical line $x=0$,  $\ell_1$ the line $-x + \sqrt{3} y = 1/2$, and $\ell_2$ the line $x + \sqrt{3} y = 1/2$. Let $\sigma_1$ and $\sigma_2$ denote the symmetries with respect to $\ell_1$ and $\ell_2$, respectively: they are given by the transformations
\begin{equation}\label{sigmai}
\sigma_1: \begin{cases}
x' = \frac12 x + \frac{\sqrt{3}}{2} y -\frac14
\\
y' = \frac{\sqrt{3}}{2} x - \frac12 y + \frac{\sqrt{3}}{4}
\end{cases}\quad
\sigma_2: \begin{cases}
x' = \frac12 x - \frac{\sqrt{3}}{2} y +\frac14
\\
y' = -\frac{\sqrt{3}}{2} x - \frac12 y + \frac{\sqrt{3}}{4}.
\end{cases}
\end{equation}

Any set $E$ with the axis of symmetry $\ell_0, \ell_1, \ell_2$ can be written as the disjoint union of three components: $\tilde{E}  \cup \sigma_1(\tilde{E}) \cup \sigma_2(\tilde{E})$, for some set $\Tilde{E}$ symmetric with respect to the vertical axis. As a consequence, we get for every function $f$
\begin{equation}\label{trick}
\int_{E} f = \int_{\tilde{E} \cup \sigma_1(\tilde{E}) \cup \sigma_2(\tilde{E})} f = \int_{\tilde{E}} (f + f\circ \sigma_1 + f\circ \sigma_2). 
\end{equation}
This formula applies in particular to the elements of $\mathcal C_a$.

\medskip

Let us exploit again the three axis of symmetry to represent the elements of $\mathcal C_a$. We start from $\Omega_a$: it is the union of the hexagon $H_a$ and 6 copies (obtained by reflection) of the triangle $ABC$, with vertexes
\begin{equation}\label{ABC}
A=\left(0, \frac{\sqrt{3}}{2}(1-a) \right),\quad 
B=\left(\frac{a}{2}, \frac{\sqrt{3}}{2}(1-a) \right), \quad 
C=\left(\frac14, \frac{\sqrt{3}}{4} \right).
\end{equation}
Any other element $\Omega \in \mathcal C_a$ is the union of $H_a$ and 6 copies (reflections) of the convex set
\begin{equation}\label{omega}
\omega:=\Omega \cap ABC.
\end{equation}
Of course $\omega=\emptyset$ if $\Omega= H_a$. The perimeter of $\Omega$ is nothing but
\begin{equation}\label{Pomega}
P(\Omega) = 6 [P(\omega) - \overline{AC}].
\end{equation}
Going back to \eqref{trick}, we deduce an alternative formula of integration in the case of $f$ even in $x$, i.e., $f(-x,y)=f(x,y)$:
\begin{equation}\label{trick2}
\int_{\Omega} f = \int_{H_a} f + 2 \int_\omega (f + f\circ \sigma_1 + f \circ \sigma_2).
\end{equation}

\subsection{The case $a\in [0,1/4]$}\label{ss1}

As announced in the Introduction, this is the more delicate case to handle. 
The difficulty comes from calculations; on the other hand, the strategy is simple to present. Given $\Omega\in \mathcal C_a$, we look for three objects: an approximating convex set from outside $\Omega_1 \supset \Omega$, an approximating convex set from inside $\Omega_2 \subset \Omega$, and a function $v\in C^\infty(\mathbb R^2)$ with zero average on $\Omega$. By construction, we have
\begin{equation}\label{firstub}
P^2(\Omega)\mu_1(\Omega) \leq P^2(\Omega_1) \frac{\int_{\Omega_1} |\nabla v|^2}{\int_{\Omega_2} v^2}.
\end{equation}
The goal is to choose the three objects in a smart way, making this upper bound less than or equal to $16\pi^2$. 

\medskip

We start with the choice of the test function. We take
\begin{equation}\label{vk}
v_k:=(1-k)u_1+k\hat{u}_1,
\end{equation}
with  $k\in [0,1]$, which will be chosen later, with $u_1$ one of the first eigenfunctions of $T$ associated to $\mu_1(T)$:
\begin{equation}\label{u1}
u_1(x,y)= \sin\left(\frac43 \pi x \right) +  2 \cos\left(\frac{2}{\sqrt{3}}\pi y \right) \sin \left( \frac23 \pi x\right),
\end{equation}
and with $\hat{u}_1$ one of the first eigenfunctions of $\widehat{T}$ associated to $\mu_1(\hat{T})$:
\begin{equation}\label{u1hat}
\hat{u}_1(x,y)=\sin\left(\frac{8}{3} \pi x\right) - 2\cos\left(\frac{4}{\sqrt{3}} \pi y\right)
\sin\left(\frac{4}{3} \pi x\right).
\end{equation}
Note that $u_1(-x,y)=-u_1(x,y)$ and $\hat{u}_1(-x,y)=-\hat{u}_1(x,y)$, therefore $v_k$ is odd too in the $x$ variable and has zero average in $\Omega$, which is symmetric with respect to the $y$ axis. This ensures that $v_k$ is an admissible test function for $\mu_1(\Omega)$.

Let us now pass to the construction of $\Omega_1$ and $\Omega_2$. Let $\omega$ be the set associated to $\Omega$ according to \eqref{omega}, namely $\omega=\Omega \cap ABC$. Take now the line parallel to $AC$ tangent to $\omega$. This line intersects the sides $AB$ and $BC$ in two points (possibly coinciding):  we call $Q_1$ the point on the segment $AB$ and $Q_2$ the point on the side $BC$. We introduce a parameter to describe these points: $Q_2$ is of the form
$$
Q_2(c)=\left( \frac{c}{2}; \frac{\sqrt{3}}{2} (1-c)\right), \quad \hbox{for some }  c\in [a,1/2].
$$
Note that $B=Q_2(a)$ and $C=Q_2(1/2)$. Using the expression of $Q_2$ we can deduce the coordinates of $Q_1$ as functions of $c$. 
Let us now consider one of the tangent points of $Q_1(c)Q_2(c)$ to $\omega$: it is of the form 
$$
Q(s)=(1-s)Q_1(c) + s Q_2(c), \quad s\in [0,1].
$$
\begin{figure}[h]                                           
\begin{center}                        
{\includegraphics[height=5truecm] {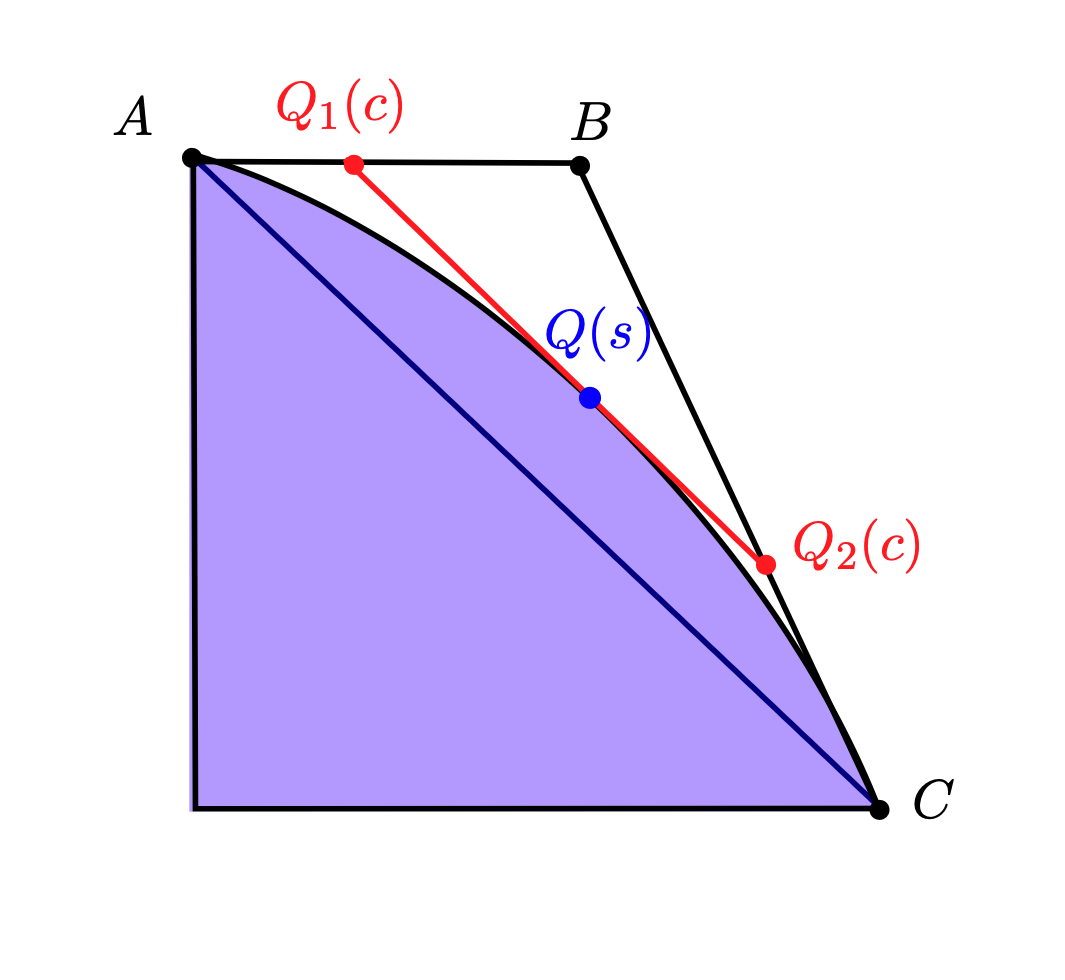}}                               
\caption{The parameters $c$ and $s$}\label{figuretets1}                                      
\end{center}                                                  
\end{figure}    
These definitions are represented in Figure \ref{figuretets1}. By convexity $\omega$ contains the triangle $AQ(s)C$ and is contained into the trapezoid $AQ_1(c)Q_2(c)C$. In particular, we may choose as $\Omega_1$ the union of $H_a$ and 6 copies (rotations) of the trapezoid $AQ_1(c)Q_2(c)C$, and as $\Omega_2$ the union of $H_a$ and 6 copies (rotations) of the triangle $AQ(s) C$.
With this choice of $\Omega_1$, using the perimeter formula \eqref{Pomega}, we obtain
$$
P(\Omega) \leq P(\Omega_1) = 6 [  \overline{AQ_1(c)} + \overline{Q_1(c)Q_2(c)} + \overline{Q_2(c)C} ].
$$
By applying the integration formula \eqref{trick2} to $f=|\nabla v_k|^2$ and then $f=|v_k|^2$, satisfying both $f(-x,y)=f(x,y)$, we get
\begin{align*}
& \int_{\Omega_1} |\nabla v_k|^2 = \int_{H_a} |\nabla v_k|^2  + 2 \int_{AQ_1(c)Q_2(c)C} \left[|\nabla v_k|^2 + |\nabla v_k|^2\circ \sigma_1 + |\nabla v_k|^2\circ \sigma_2\right],
\\
& \int_{\Omega_2} v_k^2 =  \int_{H_a} v^2  + 2 \int_{AQ(s)C}\left[ |v_k|^2 + |v_k|^2\circ \sigma_1 + |v_k|^2\circ \sigma_2\right].
\end{align*}
By combining these results, setting for brevity
\begin{align}
V & :=|\nabla v_k|^2 + |\nabla v_k|^2\circ \sigma_1 + |\nabla v_k|^2\circ \sigma_2, \label{defV0}
\\
U & := |v_k|^2 + |v_k|^2\circ \sigma_1 + |v_k|^2\circ \sigma_2,\label{defU0}
\end{align}
we can rewrite the bound \eqref{firstub} as follows:
\begin{equation}\label{estimate}
\begin{split}
&{P^2(\Omega) \mu_1(\Omega)}
\\
&{\leq \left[6\, \left(  \overline{AQ_1(c)} + \overline{Q_1(c)Q_2(c)} + \overline{Q_2(c)C} \right)\right] ^2 \frac{\int_{H_a} |\nabla v_k|^2  + 2 \int_{AQ_1(c)Q_2(c)C} V}{ \int_{H_a} |v_k|^2  + 2 \int_{AQ(s)C}U}.}
\end{split}
\end{equation}

\medskip
We explain now our strategy to prove that the right-hand side of \eqref{estimate} is less than or equal to $16\pi^2$:

{\bf 1st step.} We prove that the integral $\int_{AQ(s)C}U$ appearing in the denominator is decreasing in $s$, at least when $0\leq k\leq 1/6$. 
Therefore, to find the upper bound, we can choose $s=1$, namely $Q(s)=Q_2(c)$. More precisely, denoting by $\widetilde{P}(a,c):=6\, \big(\overline{AQ_1(c)} + \overline{Q_1(c) Q_2(c) } + \overline{Q_2(c) C} \big)$, $N(a,c):=\int_{H_a} |\nabla v_k|^2  + 2 \int_{AQ_1(c)Q_2(c)C} V$, and $D(a,c):=\int_{H_a} v_k^2  + 2 \int_{AQ_2(c)C}U$, we obtain
$$
P^2(\Omega)\mu_1(\Omega) \leq \left(\widetilde{P}(a,c)\right)^2 \frac{N(a,c)}{D(a,c)}.
$$
Note that the upper bound is less than or equal to $16 \pi^2$ if and only if
\begin{equation}\label{defF}
F(a,c):=\left(\widetilde{P}(a,c)\right)^2 N(a,c) -16 \pi^2 D(a,c) \leq 0.
\end{equation}

\smallskip

{\bf 2nd step.}
By precise estimates, we prove that $F(a,c) \leq 0$ for all $a\leq 1/60$ (and for all $c$) or for all $c\leq 0.16$ (and for all $a$) if we choose $k=0$, namely as a test function $v_0=u_1$.

\smallskip

{\bf 3rd step.} For the remaining cases, $a\in [1/60,1/4]$, we provide a more numerical proof. First we compute exactly all the integrals occurring in  
$N(a,c)$ and $D(a,c)$. Then, by estimating the partial derivatives $\frac{\partial F}{\partial a}$
and $\frac{\partial F}{\partial c}$, we are led to compute $F(a,c)$ only in a finite number of points, namely 530\,000 points in the sector $a\in [1/60,1/4]$, $c\in [\max(a,0.16),1/2]$.
For that purpose, we use successively as test functions $v_k$ for $k=0$, $k=0.06$ and $k=0.12$.
We prove that way the desired inequality $F(a,c)<0$.

The most technical parts of this program are postponed to the Appendix.

\subsection*{1st step: getting rid of the parameter $s$.} We start with some preliminary results on the functions $U,V$ and on their integrals over $T$ and $\Omega_a$. 

In view of definition \eqref{vk}, the functions $V$ and $U$ introduced in \eqref{defV0}-\eqref{defU0} are of the form
\begin{eqnarray}
&V=(1-k)^2 V_1 +k^2 V_2 +2k(1-k) V_3,\label{defVbis}
\\
&U=(1-k)^2 U_1 +k^2 U_2 +2k(1-k) U_3,\label{defUbis}
\end{eqnarray}
with
\begin{equation}\label{defVall}
\begin{array}{l}
V_1(x,y):= |\nabla u_1|^2 (x,y) + |\nabla u_1|^2( \sigma_1(x,y)) + |\nabla u_1|^2(\sigma_2(x,y)), \\
V_2(x,y):= |\nabla \hat{u}_1|^2 (x,y) + |\nabla \hat{u}_1|^2( \sigma_1(x,y)) + |\nabla \hat{u}_1|^2(\sigma_2(x,y)), \\
V_3(x,y):= \nabla u_1.\nabla \hat{u}_1 (x,y) + \nabla u_1.\nabla \hat{u}_1( \sigma_1(x,y)) +  \nabla u_1.\nabla \hat{u}_1(\sigma_2(x,y)), 
\end{array}
\end{equation}
and
\begin{equation}\label{defUall}
\begin{array}{l}
U_1(x,y):=u_1^2 (x,y) + u_1^2( \sigma_1(x,y)) + u_1^2(\sigma_2(x,y)) , \\
U_2(x,y):=\hat{u}_1^2 (x,y) + \hat{u}_1^2( \sigma_1(x,y)) + \hat{u}_1^2(\sigma_2(x,y)) , \\
U_3(x,y):=u_1 \hat{u}_1 (x,y) + u_1 \hat{u}_1( \sigma_1(x,y)) + u_1 \hat{u}_1(\sigma_2(x,y)) .\\
\end{array}
\end{equation}
\begin{lemma} \label{lemma8.2}
Let $V_i$ and $U_i$, $i=1,2,3$ be the functions defined in \eqref{defVall}-\eqref{defUall}, with $\sigma_1$ and $\sigma_2$ the symmetries introduced in \eqref{sigmai}. 
Let us introduce the following functions:
\begin{equation}\label{fipsieta}
\begin{array}{l}
\varphi(x,y)=\cos \frac{4\pi y}{\sqrt{3}}  - 2 \cos \frac{2\pi y}{\sqrt{3}} \cos 2\pi x, \\
\psi(x,y)=\cos 4\pi x - 2\cos \frac{6\pi y}{\sqrt{3}}  \cos 2\pi x, \\
\eta(x,y)=\cos \frac{8\pi y}{\sqrt{3}} + 2 \cos \frac{4\pi y}{\sqrt{3}}  \cos 4\pi x. 
\end{array}
\end{equation}
Then
\begin{eqnarray*}
 V_1(x,y) &=& \frac{8\pi^2}{3} \left( 3 - \varphi(x,y)\right),\label{V1}\\
 V_2(x,y) &=& \frac{32\pi^2}{3} \left( 3 - \eta(x,y)\right),\label{V2}\\
 V_3(x,y) &=& \frac{16\pi^2}{3} \left( \psi(x,y) - \varphi(x,y)\right),\label{V3}\\
 U_1(x,y) &=& 3 \left( \frac32 + \varphi(x,y)\right), \label{U1}\\
 U_2(x,y) &=& 3 \left( \frac32 + \eta(x,y)\right), \label{U2}\\
 U_3(x,y) &=& -3 \left( \frac12 \psi(x,y) + \varphi(x,y)\right). \label{U3}\\
\end{eqnarray*}
\end{lemma}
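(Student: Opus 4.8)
The statement is a purely computational identity: one must expand the squared gradients $|\nabla u_1|^2$, $|\nabla\hat u_1|^2$, the cross term $\nabla u_1\cdot\nabla\hat u_1$, the squares $u_1^2$, $\hat u_1^2$ and the product $u_1\hat u_1$, then symmetrize each of them by adding the $\sigma_1$- and $\sigma_2$-pull-backs, and finally recognize that the resulting trigonometric polynomial collapses onto the three functions $\varphi,\psi,\eta$ with the stated coefficients. So the plan is first to do the three ``un-symmetrized'' computations, and then to handle the symmetrization.

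\emph{Step 1: the un-symmetrized building blocks.} Starting from \eqref{u1} and \eqref{u1hat}, compute $\partial_x u_1$, $\partial_y u_1$, $\partial_x\hat u_1$, $\partial_y\hat u_1$ explicitly, square and sum, using repeatedly the product-to-sum formulas $2\sin A\cos B=\sin(A+B)+\sin(A-B)$, $2\cos A\cos B=\cos(A-B)+\cos(A+B)$, and $\sin^2 t=\tfrac12(1-\cos 2t)$, $\cos^2 t=\tfrac12(1+\cos 2t)$. For instance for $u_1^2$ one gets a constant $3/2$ (the $L^2$-type term) plus oscillatory terms, which after simplification are exactly $3\varphi(x,y)$; similarly $|\nabla u_1|^2$ produces the constant $8\pi^2$ plus an oscillatory part equal to $-\tfrac{8\pi^2}{3}\varphi$, and so on. A shortcut worth noting: since $u_1$ is an eigenfunction of $-\Delta$ on $T$ with eigenvalue $\mu_1(T)=\tfrac{16\pi^2}{9}$ and $\hat u_1$ with eigenvalue $\mu_1(\hat T)=\tfrac{64\pi^2}{9}$, one has the pointwise identity $|\nabla u_1|^2=\tfrac12\Delta(u_1^2)+\mu_1(T)u_1^2$ and likewise for $\hat u_1$; this explains why each $V_i$ is (up to the constant shift) proportional to the corresponding $U_i$ combination, and it lets one deduce the $V_i$ formulas from the $U_i$ formulas almost for free. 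The cross term $\nabla u_1\cdot\nabla\hat u_1$ is handled by the analogous polarization identity $\nabla u_1\cdot\nabla\hat u_1=\tfrac12\Delta(u_1\hat u_1)-\tfrac12(\Delta u_1)\hat u_1-\tfrac12 u_1(\Delta\hat u_1)$.

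\emph{Step 2: symmetrization under $\sigma_1,\sigma_2$.} The real work is to show that after adding the two reflected copies, the answer becomes symmetric in the three axes and can be written through $\varphi,\psi,\eta$. Substitute the affine maps \eqref{sigmai} into each oscillatory term; each $\cos$ or $\sin$ of a linear form $\alpha x+\beta y+\gamma$ is sent to a $\cos$/$\sin$ of the transformed linear form, which one expands again with the addition formulas. The key structural fact that makes everything cancel nicely is that the three reflections $\sigma_0$ (the $x$-reflection), $\sigma_1$, $\sigma_2$ permute the three ``wave directions'' $(\tfrac43\pi,0)$, $(\tfrac23\pi,\tfrac{2\pi}{\sqrt3})$, $(\tfrac23\pi,-\tfrac{2\pi}{\sqrt3})$ appearing in the $\pm\tfrac{2\pi}{\sqrt3}$-period lattice of $T$ (and correspondingly the doubled directions for $\hat T$), so that a sum over one orbit is automatically invariant; one then just identifies the invariant combination. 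Concretely, $\varphi$ is (a multiple of) the full-symmetrization of $\cos\tfrac{4\pi y}{\sqrt3}$, $\eta$ of $\cos\tfrac{8\pi y}{\sqrt3}$, and $\psi$ of $\cos 4\pi x$; once one checks these three base cases, linearity finishes the six identities. I would organize this as a short lemma: ``for the three reflections, $\sum_{j=0}^{2}\cos\!\big(\text{linear form}\big)\circ\sigma_j$ equals $-\varphi$ (resp.\ $\psi$, resp.\ $\eta$) in the relevant cases,'' and then assemble.

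\emph{Main obstacle.} There is no conceptual difficulty — the danger is purely bookkeeping: keeping track of the factors $2$ from the three-term structure of $u_1,\hat u_1$, the factors $\pi^2$, $\sqrt3$, and signs through the affine substitutions, and making sure the ``constant'' parts ($\tfrac32$, $3$, $8\pi^2$, $32\pi^2$, $16\pi^2$) come out right. The cross terms $V_3,U_3$ are the most error-prone, because there the two different frequency lattices (of $T$ and of $\hat T$) interact and one must verify that only the $\varphi$ and $\psi$ combinations survive (no genuinely new function appears). I would double-check each identity numerically at a couple of generic points $(x,y)$ before committing to it. In the write-up I would present the eigenfunction/polarization shortcut to cut the six computations down to essentially two independent ones ($U_1$ and $U_3$, say) plus the three symmetrization base cases, and relegate the remaining algebra to the Appendix as the paper already announces.
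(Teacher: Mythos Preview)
Your plan is correct and would yield the six identities; it is close in spirit to the paper's proof (both are direct trigonometric computations), but the two differ in where the main simplification is put. The paper does not substitute the affine maps \eqref{sigmai} term by term as you propose in Step~2; instead it observes once and for all that
\[
u_1(\sigma_1(x,y))=-u_1(x+1,y),\qquad u_1(\sigma_2(x,y))=-u_1(x-1,y),
\]
(and the analogous identities for $\hat u_1$), so that symmetrizing $u_1^2$, $|\nabla u_1|^2$, etc., reduces to summing three $x$-translates by $0,\pm1$; then the elementary identity $\cos\theta+\cos(\theta+2\pi/3)+\cos(\theta+4\pi/3)=0$ kills all terms whose $x$-frequency is not a multiple of $3\cdot\frac{2\pi}{3}=2\pi$, leaving precisely $\varphi,\psi,\eta$. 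This is considerably less bookkeeping than tracking each wave direction through each affine map. Conversely, your Laplacian/polarization shortcut in Step~1, deriving $V_i$ from $U_i$ via $|\nabla u|^2=\tfrac12\Delta(u^2)+\mu u^2$ (and noting that $\Delta$ commutes with the isometries $\sigma_j$, while $\varphi,\psi,\eta$ are themselves Laplace eigenfunctions), is a genuine economy that the paper does not exploit: it reduces the six computations to the three $U_i$, at the cost of checking that $\Delta\varphi=-\tfrac{16\pi^2}{3}\varphi$, $\Delta\eta=-\tfrac{64\pi^2}{3}\eta$, $\Delta\psi=-16\pi^2\psi$. Combining the paper's translation trick for the symmetrization with your Laplacian shortcut would give the shortest proof.
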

\begin{proof}
The proof is straightforward. For example,  for $u_1$
using the expressions of the symmetries $\sigma_1, \sigma_2$ given in \eqref{sigmai}, we get
\begin{equation}\label{u1sig1}
u_1(\sigma_1(x,y))=-u_1(x+1,y)\quad u_1(\sigma_2(x,y))=-u_1(x-1,y) .
\end{equation}
Now since $u_1^2$ can be written
$$u_1^2(x,y)=\frac{1}{2}-\frac{1}{2} \cos \frac83 \pi x  +2 \cos \frac{2\pi y}{\sqrt{3}} \big(\cos \frac23 \pi x  - \cos 2\pi x \big)+
\big(1+\cos \frac{4\pi y}{\sqrt{3}} \big)\big(1-\cos \frac43 \pi x \big)$$
the formula for $U_1$ follows thanks to the property $\cos(a)+\cos(a+2\pi/3)+\cos(a+4\pi/3)=0$ that holds  for any $a$.\\
The same argument is used for all the other functions.
\end{proof}

The first application of the previous lemma is the monotonicity of the integrand $U$, which allows us to get rid of the parameter $s$.
\begin{lemma}\label{lemmas}
Let $U$ be the function defined in \eqref{defUbis}, namely $U=(1-k)^2 U_1 +k^2 U_2 +2k(1-k) U_3$ with $U_i$ introduced in \eqref{defUall}. Then, for all $k\in [0,1/6]$ its derivatives satisfy
$$\frac{\partial U}{\partial x} \leq 0 \qquad \frac{\partial U}{\partial y} \geq 0.$$
Consequently, the function $s\mapsto \int_{AQ(s)C} U(x,y)\dx\dy$ is decreasing and, for an upper bound of the Rayleigh quotient, we can choose $s=1$,
namely $Q(s)=Q_2(c)$.
\end{lemma}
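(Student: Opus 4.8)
The statement has two halves. First, the pointwise bounds $\partial_x U\le 0$ and $\partial_y U\ge 0$, which I would prove on the triangle $ABC$ of \eqref{ABC} --- this is more than enough, since every subtriangle $AQ(s)C$ and every trapezoid $AQ_1(c)Q_2(c)C$ lies inside $ABC$. Second, the monotonicity of $s\mapsto\int_{AQ(s)C}U$, which is an easy consequence of the first half.

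For the pointwise bounds I would make the derivatives completely explicit. Inserting the formulas of Lemma~\ref{lemma8.2} into \eqref{defUbis} yields
\[
U=\tfrac92\big[(1-k)^2+k^2\big]+3(1-k)(1-3k)\,\varphi+3k^2\,\eta-3k(1-k)\,\psi .
\]
Differentiating and using $\sin 4\pi x=2\sin 2\pi x\cos 2\pi x$, one factors $\partial_x U=12\pi\sin(2\pi x)\,G(x,y)$, with $G$ a trigonometric polynomial in $\cos\tfrac{2\pi y}{\sqrt3},\cos\tfrac{4\pi y}{\sqrt3},\cos\tfrac{6\pi y}{\sqrt3},\cos 2\pi x$; as $\sin 2\pi x\ge0$ for $x\in[0,1/4]$, it suffices to prove $G\le0$ on $ABC$. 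On $ABC$ one has $\tfrac{\sqrt3}{4}\le y\le\tfrac{\sqrt3}{2}(1-a)$, which confines $\tfrac{2\pi y}{\sqrt3}\in[\tfrac\pi2,\pi]$, $\tfrac{4\pi y}{\sqrt3}\in[\pi,2\pi]$ and $\tfrac{6\pi y}{\sqrt3}\in[\tfrac{3\pi}{2},3\pi]$; in particular $\cos\tfrac{2\pi y}{\sqrt3}\le0$, so the leading term $(1-k)(1-3k)\cos\tfrac{2\pi y}{\sqrt3}$ of $G$ has the correct sign, and since for $k\in[0,1/6]$ the remaining coefficients satisfy $k^2\le 1/36$, $k(1-k)\le 5/36$, the $\eta$- and $\psi$-corrections should not reverse it. The analogous factorisation of $\partial_y U$ (a factor $\tfrac{12\pi}{\sqrt3}$ times a bracket in the sines), together with $\sin\tfrac{2\pi y}{\sqrt3}\ge0$ and $\sin\tfrac{4\pi y}{\sqrt3}\le0$ on $ABC$, gives $\partial_y U\ge0$ by the same mechanism.

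The delicate point --- and the reason for the bound $k\le 1/6$ --- is that the leading term vanishes at the vertices $A$ and $C$ of $ABC$, so there the sign of $G$ is settled by a competition among first-order Taylor terms and the crude estimates above are insufficient. Here one must exploit the geometry of $ABC$ rather than its bounding box: near $C=(1/4,\sqrt3/4)$ the triangle lies in the wedge bounded by the lines through $C$ of slopes $-\sqrt3$ and $-\sqrt3(1-2a)$, so $y-\sqrt3/4\ge\sqrt3(1-2a)\,(1/4-x)$, while near $A$ it lies in the wedge bounded by the horizontal line and the line of slope $-\sqrt3(1-2a)$. Feeding these constraints into the linearisations of $G$ at $C$ and at $A$, and using $a\le 1/4$, the required inequalities reduce to explicit one-variable inequalities in $k$ (of the type $4k^2+5k-1\le 0$) that hold, with a little room, for $k\le 1/6$. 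Making all of this uniform over $ABC$ and over $a\in[0,1/4]$ is the technical heart of the argument; I would split $ABC$ into a neighbourhood of each vertex, treated by the linearisation, and a complementary region, where the leading term is bounded away from zero and domination by the corrections is immediate.

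Granted $\partial_x U\le0$ and $\partial_y U\ge0$ on $ABC$, the monotonicity of $s\mapsto\int_{AQ(s)C}U$ follows by differentiating under the integral sign. Since $Q(s)$ runs along a line parallel to the base $AC$, all the triangles $AQ(s)C$ have the same area; writing the integral in coordinates adapted to $AC$ (base direction and height), at each fixed height the integration interval simply translates in the direction $\vec{AC}$ as $s$ increases, so $\tfrac{d}{ds}\int_{AQ(s)C}U$ equals a non-negative-weighted average of $U(\text{point on }CQ(s))-U(\text{point on }AQ(s))$. The two boundary points at a given height are joined by a segment directed along $\vec{AC}$, which for $a\le1/4$ has positive first component and negative second component; hence $\partial_x U\le0$ and $\partial_y U\ge0$ make that difference $\le0$, so the integral is non-increasing in $s$. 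Therefore $\int_{\Omega_2}v_k^2=\int_{H_a}v_k^2+2\int_{AQ(s)C}U\ge\int_{H_a}v_k^2+2\int_{AQ_2(c)C}U$, which is precisely the assertion that we may take $s=1$ for an upper bound of the Rayleigh quotient.
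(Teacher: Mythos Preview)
Your treatment of the monotonicity of $s\mapsto\int_{AQ(s)C}U$ is correct and coincides with the paper's: both observe that the direction $\vec{AC}$ has positive $x$-component and negative $y$-component, so $\partial_xU\le0$ and $\partial_yU\ge0$ make $U$ non-increasing along any segment parallel to $AC$, and the derivative in $s$ is a (non-negatively weighted) integral of such differences. The paper phrases this via the shape derivative \eqref{derivI}, you via coordinates adapted to $AC$; these are the same computation.

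The pointwise sign argument, however, remains a sketch. You correctly factor $\partial_xU=12\pi\sin(2\pi x)\,G$ and identify $C$ as the delicate point (your claim that the leading term of $G$ also vanishes at $A$ is false: there $\cos\tfrac{2\pi y}{\sqrt3}=-\cos\pi a\neq0$; what vanishes at $A$ is $\sin 2\pi x$, which is harmless). But the heart of your plan --- a vertex-neighbourhood/complement splitting with a linearisation near $C$, made uniform over $a\in[0,1/4]$ --- is not carried out, and ``the corrections should not reverse it'' is not a proof. The paper avoids this decomposition entirely by a cleaner two-step reduction: first, $G$ turns out to be \emph{increasing in $k$} on $[0,1/6]$ (an affine-in-$k$ derivative is checked at $k=0$ and $k=1/6$), so it suffices to treat $k=1/6$; second, at $k=1/6$ the coefficient of $\cos 2\pi x$ in $G$ is positive, so one may replace $\cos 2\pi x$ by its maximum on the segment $AC$, and then reduce further to $a=1/4$. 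This collapses everything to a single-variable inequality in $y$, which is verified by hand. The same monotonicity-in-$k$ trick disposes of $\partial_yU\ge0$. If you want to complete your approach, the missing ingredient is precisely this reduction to the extremal $k$; without it, the splitting you propose would require substantially more bookkeeping to achieve uniformity in $(a,x,y,k)$.
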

The proof is postponed to the Appendix.

We conclude the paragraph with some useful formulas, which are deduced by combining the integration formula \eqref{trick} together with the representation result of Lemma \ref{lemma8.2}.
\begin{lemma}\label{lem-F12}
\begin{align*}
\int_{T\setminus \Omega_a} u_1^2 \mathrm{d}x\mathrm{d}y & = \frac{9 \sqrt{3}a^2}{8} + 
\frac{3\sqrt{3}}{8\pi^2}(1-\cos 2\pi a + 2\pi a \sin 2\pi a)=:F_1(a), \\
\int_{T\setminus \Omega_a} |\nabla u_1|^2 \mathrm{d}x\mathrm{d}y  &= 2\sqrt{3} \pi^2 a^2 - \frac{\sqrt{3}}{3}(1-\cos 2\pi a + 2\pi a \sin 2\pi a)=:F_2(a),\\
\int_{T\setminus \Omega_a} \hat{u}_1^2 \mathrm{d}x\mathrm{d}y & = \frac{9 \sqrt{3}a^2}{8} + 
\frac{3\sqrt{3}}{32\pi^2}(1-\cos 4\pi a + 4\pi a \sin 4\pi a)=:F_3(a),\\
\int_{T\setminus \Omega_a} |\nabla \hat{u}_1|^2 \mathrm{d}x\mathrm{d}y  &= 8\sqrt{3} \pi^2 a^2 - \frac{\sqrt{3}}{3}(1-\cos 4\pi a + 4\pi a \sin 4\pi a)=:F_4(a),\\
\int_{T\setminus \Omega_a} u_1 \hat{u}_1 \mathrm{d}x\mathrm{d}y & = \frac{3\sqrt{3}}{16\pi^2}(\cos 2\pi a +\cos 4\pi a -2 -4\pi a \sin 2\pi a)=:F_5(a), \\
\int_{T\setminus \Omega_a} \nabla u_1 .\nabla \hat{u}_1 \mathrm{d}x\mathrm{d}y & =\frac{2\sqrt{3}}{3}(2\cos 2\pi a -1 -\cos 4\pi a - 2\pi a \sin 2\pi a)=:F_6(a).
\end{align*}
\end{lemma}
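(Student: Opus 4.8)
\textbf{Proof proposal for Lemma~\ref{lem-F12}.}
The plan is to compute each of the six integrals directly, exploiting the three-fold symmetry via the integration formula \eqref{trick} combined with the trigonometric identities already recorded in Lemma~\ref{lemma8.2}. First I would observe that each integrand ($u_1^2$, $|\nabla u_1|^2$, $\hat u_1^2$, $|\nabla\hat u_1|^2$, $u_1\hat u_1$, $\nabla u_1\cdot\nabla\hat u_1$) is even in $x$, so the domain $T\setminus\Omega_a$ can be described concretely: it is the union of three congruent pieces, and by \eqref{trick} the integral over $T\setminus\Omega_a$ equals the integral over one representative triangular corner of the quantity $f+f\circ\sigma_1+f\circ\sigma_2$. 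A natural choice is the corner of $T$ near the vertex $(0,\sqrt3/2)$ that gets chopped off when passing to $\Omega_a$; by the description of $\Omega_a$ as $\hat T_a\cap T$, the region $T\setminus\Omega_a$ near that vertex is precisely the small triangle with vertices $(0,\sqrt3/2)$, $(\pm a/2,\sqrt3(1-a)/2)$ — an equilateral triangle of side $a$. So the task reduces to integrating the \emph{symmetrized} integrands $V_i$, $U_i$ from Lemma~\ref{lemma8.2} over this single small equilateral triangle of side $a$.

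Next I would use the explicit formulas from Lemma~\ref{lemma8.2}: up to the stated constants, $U_1 = 3(\tfrac32+\varphi)$, $V_1 = \tfrac{8\pi^2}{3}(3-\varphi)$, $U_2 = 3(\tfrac32+\eta)$, $V_2 = \tfrac{32\pi^2}{3}(3-\eta)$, $U_3 = -3(\tfrac12\psi+\varphi)$, $V_3 = \tfrac{16\pi^2}{3}(\psi-\varphi)$, where $\varphi,\psi,\eta$ are the explicit combinations of cosines in \eqref{fipsieta}. Thus every one of the six integrals is a fixed linear combination of the area of the small triangle (which is $\sqrt3 a^2/4$, giving the polynomial terms $\tfrac{9\sqrt3 a^2}{8}$, $2\sqrt3\pi^2 a^2$, etc.) together with the three basic integrals $\int\varphi$, $\int\psi$, $\int\eta$ over that triangle. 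So it suffices to evaluate these three. Each of $\varphi,\psi,\eta$ is a sum of terms of the form $\cos(\beta y)$ and $\cos(\gamma y)\cos(2\pi x)$ (or $\cos(4\pi x)$); over the small equilateral triangle with apex at $(0,\sqrt3/2)$ and the two slanted sides, one integrates in $x$ first on the symmetric interval whose endpoints are linear in $y$, and then in $y$. These are elementary antiderivatives; the $x$-integration of $\cos(2\pi x)$ against symmetric limits produces a $\sin$ evaluated at a linear function of $y$, and the subsequent $y$-integration is handled by standard product-to-sum formulas and integration by parts for the $y\sin(\cdot)$ terms. The presence of $2\pi a\sin 2\pi a$ and $4\pi a\sin 4\pi a$ in the answers is exactly the fingerprint of one such integration by parts.

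I would carry this out term by term: compute $\int_{\triangle_a}\varphi$, obtaining an expression in $1-\cos 2\pi a$ and $2\pi a\sin 2\pi a$; compute $\int_{\triangle_a}\psi$ and $\int_{\triangle_a}\eta$, obtaining expressions in the analogous quantities with argument $4\pi a$ (and, for $\psi$, a mix since $\psi$ involves both $\cos 4\pi x$ and $\cos(6\pi y/\sqrt3)\cos 2\pi x$). Then I substitute into the linear combinations above and collect, matching the constants $\tfrac{3\sqrt3}{8\pi^2}$, $\tfrac{\sqrt3}{3}$, $\tfrac{3\sqrt3}{32\pi^2}$, $\tfrac{3\sqrt3}{16\pi^2}$, $\tfrac{2\sqrt3}{3}$ that appear in the claimed formulas for $F_1,\dots,F_6$. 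A useful sanity check at the end: the combinations should be consistent with $\int_T u_1^2$, $\int_T|\nabla u_1|^2$ being the known normalization and eigenvalue data for $T$ (so that $F_2(a)/F_1(a)$-type ratios degenerate correctly as $a\to 0$, where all six $F_i(a)\to 0$, which one checks by Taylor expanding $\cos$ and $\sin$). The only real obstacle is bookkeeping: there is no conceptual difficulty, but the six integrals involve several cosine modes each, and keeping the numerical constants and the $\sqrt3$ factors straight through the $x$- then $y$-integration is where errors creep in; I would organize the computation by first tabulating $\int_{\triangle_a}\cos(\beta y)\,dx\,dy$ and $\int_{\triangle_a}\cos(\gamma y)\cos(2\pi x)\,dx\,dy$ as functions of $\beta,\gamma$ once and for all, then reading off all six results mechanically.
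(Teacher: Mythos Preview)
Your approach is exactly the paper's: reduce via the symmetry formula \eqref{trick} to integrating the symmetrized quantities $U_i,V_i$ of Lemma~\ref{lemma8.2} over the single ``north'' corner triangle $T_a$ with vertices $(0,\sqrt3/2)$, $(\pm a/2,\sqrt3(1-a)/2)$, and then carry out the elementary $x$-then-$y$ integration of $\varphi,\psi,\eta$. The paper records only the reduction and calls the rest a straightforward computation; your outline simply fills in the bookkeeping they omit.
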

\begin{proof}
Let $T_a$ be the triangle located at the north of $T$, namely the triangle with vertexes $\left(\pm \frac{a}{2}; \frac{\sqrt{3}}{2}(1-a)\right)$ and $(0,\frac{\sqrt{3}}{2})$.
According to the symmetry property \eqref{trick}, we have for example
$$\int_{T\setminus \Omega_a} u_1^2\dx\dy = \int_{T_a} U_1(x,y)\dx\dy, \quad \mbox{and} \quad \int_{T\setminus \Omega_a} |\nabla u_1|^2\dx\dy = \int_{T_a} V_1(x,y)\dx\dy, $$
and Lemma \ref{lem-F12} follows from a straightforward computation of these integrals on the triangle $T_a$ using Lemma \ref{lemma8.2}. The computation is the same for the other functions.
\end{proof}

\begin{remark}\label{remT}
Without making use of the reflections, we can integrate over the whole $T$ the following quantities:
\begin{equation}\label{Ray-u1} 
\int_T |\nabla u_1|^2 \mathrm{d}x\mathrm{d}y = \frac{2\pi^2}{\sqrt{3}}, \quad \int_T u_1^2 \mathrm{d}x\mathrm{d}y = \frac{3\sqrt{3}}{8},
\end{equation}
\begin{equation}\label{Ray-u1hat} 
\int_T |\nabla \hat{u}_1|^2 \mathrm{d}x\mathrm{d}y = \frac{8\pi^2}{\sqrt{3}}, \quad \int_T \hat{u}_1^2 \mathrm{d}x\mathrm{d}y = \frac{3\sqrt{3}}{8},
\end{equation}
\begin{equation}\label{mixte}
\int_T \nabla u_1 \cdot \nabla \hat{u}_1\mathrm{d}x\mathrm{d}y=0 , \quad
\int_T u_1\hat{u}_1\mathrm{d}x\mathrm{d}y=0.
\end{equation}
Notice that the former allow us to recover $\mu_1(T)={\int_T |\nabla u_1|^2}/{\int_T u_1^2} = 16 \pi^2 /9$. 
\end{remark}

\subsection*{2nd step: small values of $a$.}
We want to prove that, choosing the test function $u_1$, the quantity $F(a,t)$ defined in \eqref{defF} is negative for small values of $a$, $a \neq 0$ (whereas it vanishes for $a=0$). We are going to prove

\begin{lemma}\label{smalla}
The function $F$ defined in \eqref{defF} satisfies
$$F(a,c) \leq 0\quad \mbox{ when $a\in [0,1/60]$ (for any $c$) or when $c\in [0,0.16]$ (for any $a$)}.$$
Equality occurs only for $a=0$, namely for the equilateral triangle.
\end{lemma}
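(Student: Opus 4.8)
The plan is to run the scheme of the ``2nd step'' with $k=0$ (so that the test function is $u_1$ and $V=V_1$, $U=U_1$ in \eqref{defF}): first make the right-hand side of \eqref{defF} an explicit function of $(a,c)$, then reduce each of the two ranges to a one-variable estimate.

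First I would obtain an explicit formula for $F$. Since $\Omega_1$ (resp. $\Omega_2$) is $H_a$ with the six corner trapezoids (resp. triangles) glued in, the identity $\int_{H_a}f=\int_{\Omega_a}f-2\int_{ABC}(f+f\circ\sigma_1+f\circ\sigma_2)$, together with Lemma~\ref{lem-F12} for the hexagonal defect $T\setminus\Omega_a$ and the representations $V_1=\tfrac{8\pi^2}{3}(3-\varphi)$, $U_1=3(\tfrac32+\varphi)$ from Lemma~\ref{lemma8.2}, yields
\[
N(a,c)=\frac{2\pi^2}{\sqrt3}-F_2(a)-2\,n(a,c),\qquad
D(a,c)=\frac{3\sqrt3}{8}-F_1(a)-2\,d(a,c),
\]
where $n(a,c):=\int_{BQ_1(c)Q_2(c)}V_1\ge0$ and $d(a,c):=\int_{ABQ_2(c)}U_1\ge0$ are integrals of explicit trigonometric functions over the thin corner triangle cut off near $B$; both vanish at $c=a$ (where $\Omega_1=\Omega_2=\Omega_a$) and grow, monotonically in $c$ since $U_1,V_1\ge0$, to $\int_{ABC}V_1$, resp. $\int_{ABC}U_1$, at $c=1/2$ (where $\Omega_1=\Omega_2=H_a$). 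Likewise $\widetilde P(a,c)=3\big[(1-\rho)(1-a)+\rho\sqrt{1-3a+3a^2}\,\big]\le 3(1-a)=P(\Omega_a)$, with $\rho=\rho(a,c)=\tfrac{2(c-a)}{1-2a}\in[0,1]$. Setting $a=0$ everything collapses ($\Omega_a=H_a=T$, $F_1=F_2=n=d=0$, $\widetilde P=3$), and Remark~\ref{remT} gives
\[
F(0,c)=9\cdot\frac{2\pi^2}{\sqrt3}-16\pi^2\cdot\frac{3\sqrt3}{8}=6\sqrt3\,\pi^2-6\sqrt3\,\pi^2=0,
\]
which is exactly $P^2(T)\mu_1(T)=16\pi^2$: this both identifies the equilateral triangle as the only equality case and shows that for $a>0$ we must prove strict negativity.

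For the range $c\le0.16$ (hence $a\le c\le0.16$), the tangent point $Q_2(c)$ stays bounded away from $C$, so $n(a,c)$, $d(a,c)$ remain small and $\widetilde P(a,c)$ stays close to $P(\Omega_a)$. Using $\widetilde P(a,c)^2\le(3-3a)^2$, $n(a,c)\ge0$ and $d(a,c)\le d(a,0.16)$, one bounds $F(a,c)$ above by the one-variable function $(3-3a)^2\big(\tfrac{2\pi^2}{\sqrt3}-F_2(a)\big)-16\pi^2\big(\tfrac{3\sqrt3}{8}-F_1(a)-2d(a,0.16)\big)$, which I would show is $\le0$ on $[0,0.16]$ (and $<0$ for $a>0$) by expanding the trigonometric parts of $F_1,F_2$ and the elementary integral $d(a,0.16)$ to second order in $a$ with explicit remainder bounds. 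For the range $a\le1/60$ (with $c$ free in $[a,1/2]$) the three quantities must be handled jointly: increasing $c$ decreases $\widetilde P$ and increases $n$ (favourable) but increases $d$ (unfavourable). The plan is to show that for such small $a$ the map $c\mapsto F(a,c)$ attains its maximum on $[a,1/2]$ at $c=1/2$ — via a concavity/derivative estimate exploiting that $\widetilde P$ is affine in $\rho$, that $d$ is concave in $\rho$, and that for $a$ small the negative second-derivative contribution of $d$ dominates the positive one of $\widetilde P^2$ — after which it remains to verify $F(a,1/2)\le0$ for $a\in(0,1/60]$, i.e. $P(H_a)^2\int_{H_a}|\nabla u_1|^2\le16\pi^2\int_{H_a}u_1^2$ with $P(H_a)=3\sqrt{1-3a+3a^2}$; using Lemma~\ref{lem-F12} together with $\int_{ABC}U_1$ and $\int_{ABC}V_1$ this is once more an elementary one-variable inequality, with equality only at $a=0$.

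The delicate part — and the reason the threshold here is $1/60$ rather than $1/4$ — is precisely the range $a\le1/60$. The naive bound $d(a,c)\le d(a,1/2)=\int_{ABC}U_1$, which amounts to pairing the largest outer set $\Omega_a$ with the smallest inner mass $\int_{H_a}u_1^2$ (while still using the perimeter of the large set), loses too much: it discards a term of order $a$ whose coefficient (roughly $\tfrac{\sqrt3}{8}\int_0^1 u\,(\tfrac{15}{2}-6\cos\pi u)\,du\approx1.07$, since $u_1^2$ reaches $\tfrac{27}{4}$ at the midpoints of the sides of $T$) exceeds the only favourable linear term, which is about $12\sqrt3\,a$. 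Hence one genuinely has to use that shrinking the outer approximation (larger $c$) simultaneously lowers the perimeter and raises $n$; controlling the second-order behaviour in $c$ well enough to make $c=1/2$ the worst case, with expressions this heavy, is the main obstacle.
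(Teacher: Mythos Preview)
Your setup is correct, and you correctly identify $F(0,c)=0$ and the need for strict negativity for $a>0$. The $c\le 0.16$ part of your plan (bound $d(a,c)\le d(a,0.16)$, drop $n\ge 0$, use $\widetilde P\le 3(1-a)$, then Taylor-expand in $a$) is plausible, though you do not carry it out.

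The genuine gap is your treatment of $a\le 1/60$. You want to show that $c\mapsto F(a,c)$ attains its maximum at $c=1/2$ and then check a single endpoint. But the mechanism you invoke does not deliver this: if, as you argue, the ``negative second-derivative contribution of $d$'' dominates, then $F$ is \emph{concave} in $c$, and a concave function has no reason to peak at the right endpoint --- it could peak anywhere in $[a,1/2]$. To force the maximum to $c=1/2$ you would instead need $\partial_c F\ge 0$ throughout (or convexity plus an endpoint comparison), and you give no argument for either. You yourself call this ``the main obstacle'', so as it stands this branch of the proof is not a proof but a statement of what remains to be done.

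The paper sidesteps this difficulty by reversing the roles of $a$ and $c$: rather than optimizing in $c$ first, it expands $F(a,c)$ in powers of $a$, with $c$ treated as a parameter, and produces a \emph{uniform} bound
\[
F(a,c)\ \le\ 6\sqrt{3}\,\big(f_1(c)\,a + f_2(a_0,c)\,a^2\big),\qquad 0\le a\le a_0,
\]
where $f_1(c)<0$ for every $c\in[0,1/2]$ and $f_2(a_0,c)$ is explicit. Both claims then reduce to a \emph{one-variable} inequality in $c$: one checks $f_1(c)+a_0\,f_2(a_0,c)\le 0$, taking $a_0=1/60$ for all $c$ and $a_0=0.16$ for $c\le 0.16$ (recall $a\le c$ always). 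This is the step that avoids your obstacle entirely: the $c$-dependence is absorbed into explicit coefficients $f_1,f_2$ rather than being handled by a monotonicity or concavity argument whose direction is unclear. The price is a somewhat involved chain of elementary estimates (including a sub-lemma to control the cross term $I_2(a,c)$ in $D$), but each is a routine bound on trigonometric expressions.
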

The proof of the Lemma is postponed to the Appendix.

\subsection*{3rd step: computer assisted proof}

In view of Lemma \ref{smalla}, it remains us to prove the inequality $F(a,c) < 0$ in the zone
\begin{equation}\label{defZ}
\mathcal{Z}:=\{(a,c)\ :\  1/60 \leq a \leq 1/4,\,  \max(a,0.16) \leq c \leq 1/2\}.
\end{equation}
\begin{figure}[h!]
\includegraphics[scale=0.25]{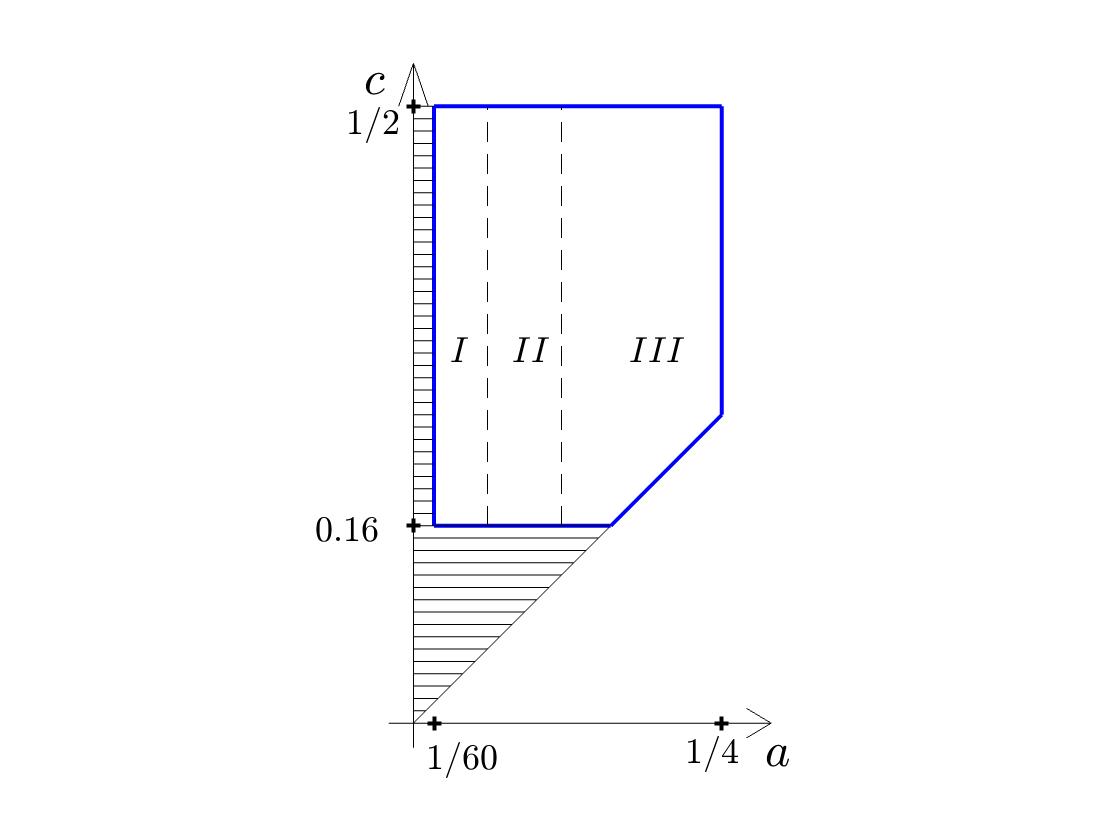}
\caption{The zone $1/60 \leq a \leq 1/4,  \max(a,0.16) \leq c \leq 1/2$ and the three sub-zones $\mathcal Z_{I}$, $\mathcal Z_{II}$, $\mathcal Z_{III}$.}\label{figZ}
\end{figure}

To this aim, we divide $\mathcal Z$ into three subregions, according to the value of $a$, see also Fig. \ref{figZ}:
\begin{equation}\label{defZi}
\begin{array}{l}
\mathcal Z_I := \mathcal Z \cap \{a\in [1/60, 0.06]\},
\\
\mathcal Z_{II} := \mathcal Z \cap \{a\in [0.06, 0.12]\},
\\
\mathcal Z_{III} := \mathcal Z \cap \{a\in [0.12, 0.25]\}.
\end{array}
\end{equation}

In each regime, we choose different values for the parameter $k$:
$$
\begin{cases}
k=0 \quad & \hbox{in }\mathcal Z_I
\\
k=0.06 \quad & \hbox{in }\mathcal Z_{II}
\\
k=0.12 \quad & \hbox{in }\mathcal Z_{III}.
\end{cases}
$$
We recall that $k$ appears in the definition \eqref{vk} of the test function $v_k$, and thus in the definition \eqref{defF} of $F$. In the next lemma we bound the modulus of the partial derivatives of $F$ (with respect to $a$ and $c$) in each of the three regimes.

\begin{lemma}\label{estimategrad} Let $F(a,c)$ be the function introduced in \eqref{defF} and let $\mathcal Z_{I}$, $\mathcal Z_{II}$, $\mathcal Z_{III}$ be the regions defined in \eqref{defZi}. Then
$$
\left| \frac{\partial F}{\partial a} (a,c) \right| \leq 
\begin{cases}
819.6011 \quad & \hbox{in }\mathcal Z_{I}
\\
1048.9639 \quad & \hbox{in }\mathcal Z_{II}
\\
1353.8951 \quad & \hbox{in }\mathcal Z_{III}
\end{cases}
, \quad \left| \frac{\partial F}{\partial c}(a,c) \right| \leq \begin{cases}
84.4817 \quad & \hbox{in }\mathcal Z_{I}
\\
170.9884 \quad & \hbox{in }\mathcal Z_{II} 
\\
352.1112 \quad & \hbox{in }\mathcal Z_{III}
\end{cases}.
$$
\end{lemma}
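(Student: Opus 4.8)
The plan is to obtain the claimed numerical bounds on $\partial F/\partial a$ and $\partial F/\partial c$ by first assembling an explicit closed form for $F(a,c)$, then differentiating term by term and bounding each resulting piece on the relevant rectangle $\mathcal Z_j$. Recall that $F(a,c) = (\widetilde P(a,c))^2 N(a,c) - 16\pi^2 D(a,c)$, where $\widetilde P$ is an explicit (piecewise smooth) function built from the Euclidean lengths $\overline{AQ_1(c)}$, $\overline{Q_1(c)Q_2(c)}$, $\overline{Q_2(c)C}$, and where $N(a,c) = \int_{H_a}|\nabla v_k|^2 + 2\int_{AQ_1(c)Q_2(c)C} V$ and $D(a,c) = \int_{H_a} v_k^2 + 2\int_{AQ_2(c)C} U$. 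The first step is therefore to carry out, once and for all, the exact integration of $V$ and $U$ over the relevant polygons, using Lemma~\ref{lemma8.2} together with the integration formula \eqref{trick2} and the explicit $F_i(a)$ of Lemma~\ref{lem-F12}. Since $V$ and $U$ are finite trigonometric polynomials in $x$ and $y$ (through $\varphi,\psi,\eta$), integrating them over a triangle or trapezoid with vertices depending affinely on $a$ and $c$ produces elementary closed-form expressions in $a,c$ (ratios and products of sines, cosines, and polynomials), and the same is true after differentiating in $a$ or $c$. This gives $F$ and its two partial derivatives as explicit, if lengthy, functions — this bookkeeping is exactly the ``most technical parts postponed to the Appendix'' referred to in the text.

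The second step is to bound $|\partial F/\partial a|$ and $|\partial F/\partial c|$ on each $\mathcal Z_j$. I would proceed by the triangle inequality applied to the expanded derivative: $\partial_a F = 2\widetilde P\,\partial_a\widetilde P\cdot N + \widetilde P^2\,\partial_a N - 16\pi^2\,\partial_a D$, and similarly for $\partial_c F$ (noting $\partial_c D$ involves only the $\int_{AQ_2(c)C}U$ term while $\partial_a$ hits both the $H_a$-integrals and the $a$-dependence of the vertices $A,C$). On the compact rectangle $\mathcal Z_j$ each of the finitely many elementary factors — $\widetilde P$, $N$, $D$, their partials, and the lengths and trig functions entering them — is continuous, hence bounded, and one estimates each factor by its maximum modulus over $\mathcal Z_j$ using crude but safe bounds ($|\sin|,|\cos|\le 1$, monotonicity of the affine vertex coordinates in $a,c$, lower bounds on denominators like $\sin(\theta+\alpha)$-type quantities that stay away from zero on $\mathcal Z_j$). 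Multiplying these factor bounds and summing gives the stated constants; the three different numerical values across $\mathcal Z_I,\mathcal Z_{II},\mathcal Z_{III}$ reflect both the larger range of $a$ (hence larger oscillation of the high-frequency terms, e.g. $\sin 4\pi a$) and the different choice of $k$ in $v_k$ in each subzone.

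The main obstacle is purely one of controlled bookkeeping rather than conceptual difficulty: the expression for $F$ is genuinely bulky (it mixes the six $F_i(a)$, the trapezoid/triangle integrals of $V$ and $U$ with their $(1-k)^2,k^2,2k(1-k)$ weights, and the squared perimeter $\widetilde P^2$), so the derivative has many terms, and one must be careful that the bounds are rigorous — in particular that every denominator appearing after differentiating $\widetilde P$ or the integral formulas is bounded below on $\mathcal Z_j$, and that the piecewise nature of $\widetilde P$ (the positions of $Q_1,Q_2$ relative to $A,B,C$) does not introduce a non-smooth point inside $\mathcal Z_j$. Since the constants in the statement are given to four decimal places, the intended proof is almost certainly computer-assisted: one evaluates the explicit derivative formulas symbolically, then maximizes the modulus over $\mathcal Z_j$ either by interval arithmetic or by the factorwise triangle-inequality estimate above with a safety margin. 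I would present the closed forms, reduce the bound to a finite list of elementary sup estimates over a box, and then record the resulting numbers, deferring the routine algebra and the numerical check to the Appendix, consistently with how the rest of Section~\ref{sectionIII} is organized.
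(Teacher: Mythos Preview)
Your plan is correct in principle --- $F$ does admit an explicit closed form whose derivatives can be bounded on each box --- but the paper proceeds by a genuinely different route, and the specific constants $819.6011$, etc., come from that route rather than from a symbolic ``differentiate everything then take sups'' computation.

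The paper's proof does not differentiate the closed-form primitives. Instead it introduces the splitting $N(a,c)=\hat N(a)-2I_N(a,c)$ and $D(a,c)=\hat D(a)-2I_D(a,c)$, where $\hat N(a)=\int_{\Omega_a}|\nabla v_k|^2$, $\hat D(a)=\int_{\Omega_a}v_k^2$, and $I_N,I_D$ are integrals of $V,U$ over the small triangles $T_N=Q_1(c)BQ_2(c)$ and $T_D=ABQ_2(c)$. The one-variable pieces $\hat N'(a),\hat D'(a)$ are shown, by an analytic sign argument on the second derivatives (using the explicit $F_i$ of Lemma~\ref{lem-F12}), to be monotone decreasing and zero at $a=0$, hence bounded by their endpoint values $\hat N'(a_{\min}),\hat N'(a_{\max})$ etc. For $I_N$ and $I_D$ the derivatives are computed by differentiating the \emph{domain of integration} (Leibniz/shape derivative), which produces line integrals $H_1,H_2,H_3$ and $J_1,J_2,J_3$ over the moving edges; these are bounded using only $\|V\|_\infty\le 16\pi^2(1+k)^2$, $\|U\|_\infty\le 27/2$, the signs of $\partial x_N/\partial a$, $\partial x_D/\partial a$, and explicit edge lengths. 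Finally the paper separates the positive and negative terms of $\partial_a F$ and $\partial_c F$ and bounds each block. What this buys is (i) no need to carry or differentiate the long trigonometric primitives in Remark~\ref{remInt}, and (ii) systematic exploitation of signs (e.g.\ $H_1\le 0$, $H_2\ge 0$, $J_1\le 0$), which is why the resulting constants are as tight as stated. Your symbolic-plus-triangle-inequality approach would also succeed, but would produce larger constants unless you rediscover the same sign cancellations; in particular you should not expect to recover the exact four-decimal numbers in the statement by a purely factorwise $\sup$ estimate.
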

The proof of the Lemma is postponed to the Appendix.

Let us now explain the computer assisted part. We start by considering the rectangular regions $\mathcal Z_I$ and $\mathcal Z_{II}$, which are of the form $[a_{min}, a_{max}]\times[c_{min}, c_{max}]$. Let for brevity $\mathcal R$ denote one of these rectangles. 
Given $n_a, n_c\in \mathbb N$ and setting 
$$
\delta_a=\frac{a_{max}-a_{min}}{n_a}, \ \delta_c=\frac{c_{max}-c_{min}}{n_c},
$$
the family of points
$$
\{(a_i,c_j)\}_{i=0,\ldots, n_a\,,j=0, \ldots, n_c}, \quad a_i= a_{min} + i\delta_a, \ c_j= c_{min} + j \delta_c,
$$
forms a grid in $\mathcal R$ and defines a uniform partition of $\mathcal R$ into $n_a \cdot n_c$ rectangles.
Given a pair $(a,c)\in \mathcal R$, there exist $i,j$ such that $|a-a_i| \leq \delta_a/2$, $|c-c_j|\leq \delta_c/2$.
Therefore, we deduce that for a pair $(a,c)\in \mathcal R$ and the associated grid point $(a_i,c_j)\in \mathcal R$, there holds
\begin{equation}\label{boundF}
F(a,c) \leq F(a_i, c_j) + \|\partial_a F\|_\infty \delta_a/2 + \|\partial_c F\|_\infty \delta_c/2.
\end{equation}
Denoting by $\mathcal E$ the remainder 
$$
\mathcal E:=  \|\partial_a F\|_\infty \delta_a/2 + \|\partial_c F\|_\infty \delta_c/2,
$$
we deduce from \eqref{boundF} that for every $(a,c) \in \mathcal R$,
\begin{equation}\label{boundF2}
F(a,c) \leq \max_{(a_i, c_j )\in \mathcal R}F(a_i, c_j) + \mathcal E.
\end{equation}

The same conclusion holds true for the region $\mathcal Z_{III}$, but the grid has to be slightly modified: we start from the grid associated to the larger rectangle $[a_{min}, a_{max}]\times[c_{min}, c_{max}]$; then
we refine the partition of the rectangles which cross the bisector line $c=a$, by adding the midpoints of the boundary segments and the intersection point of the diagonals; finally, we remove the points $(a_i, c_j)$ that fall outside $\mathcal Z_{III}$, namely the ones for which $c_j < \max(a_i, 0.16)$. The second operation is needed in order to guarantee that every point of $\mathcal Z_{III}$ stays at distance at most $\delta_a/2$ from one of the $a_i$s and at most $\delta_c/2$ from one of the $c_j$s.

We now bound from above the two terms appearing in the right-hand side of \eqref{boundF2}, by choosing different values of $n_a$, $n_c$, thus of $\delta_a$ and $\delta_c$: in $\mathcal Z_{I}$ we take $(n_a,n_c)=(155,150)$, in $\mathcal Z_{II}$ we take $(n_a,n_c)=(160,155)$, and in the rectangle containing $\mathcal Z_{III}$ we take $(n_a, n_c)=(730,735)$. This corresponds to a non-uniform grid of $\mathcal Z$ of about 530\,000 points (we recall that in $Z_{III}$ we neglect the points in the half plane $c<a$).
Using the upper bounds found in Lemma \ref{estimategrad}, we get
$$
\mathcal E \leq \begin{cases}
0.21032 \quad & \hbox{in }\mathcal Z_{I}
\\
0.38422 \quad & \hbox{in }\mathcal Z_{II} 
 \\
0.202 \quad & \hbox{in }\mathcal Z_{III}
\end{cases}
$$
The numerical computation in the 530\,000 grid points gives
$$
F(a_i,c_j) 
\leq 
\begin{cases}
-0.21184  \quad & \hbox{in }\mathcal Z_{I} 
\\
-0.39006 \quad & \hbox{in }\mathcal Z_{II} 
\\
-0.20324 \quad & \hbox{in }\mathcal Z_{III}. 
\end{cases}
$$
Using these estimates in \eqref{boundF2}, we conclude that 
$$
F(a,c) \leq \begin{cases}
-0.00152 \quad  & \hbox{in }\mathcal Z_{I}
\\
-0.00584  \quad  & \hbox{in }\mathcal Z_{II}
\\
-0.00124   \quad  & \hbox{in }\mathcal Z_{III}
\end{cases} 
$$
namely $F$ is negative everywhere. This concludes the proof.

\begin{remark} We underline that the numerical computation of $F$ is performed on its explicit expression as a function of $k$, $a$, and $c$, involving only trigonometric functions and square roots. Indeed, the integrals appearing in $N$ and $D$ can be computed as linear combinations of integrals over the whole $T$ of $u_1^2, \hat{u}_1^2, |\nabla u_1|^2, |\nabla \hat{u}_1|^2$ (for the explicit expressions, see Remark \ref{remT}), integrals over $T\setminus \Omega_a$ of  $u_1^2$, $\hat{u}_1^2$, $u_1 \hat{u_1}$, $|\nabla u_1|^2$, $|\nabla \hat{u}_1|^2$, $\nabla u_1\cdot \nabla \hat{u_1}$ (see Lemma \ref{lem-F12}), and integrals over the triangles $T_N=Q_1(c)BQ_2(c)$ and $T_D=ABQ_2(c)$ of $\varphi, \psi, \eta$ (see Remark \ref{remInt} in the Appendix).
\end{remark}

\subsection{The case $a\in [1/4, 1/3]$}\label{ss2}
This case is ruled out by the Szeg\H{o}-Weinberger inequality, already mentioned in \eqref{SzegoW}. This inequality states that the disk maximizes $\mu_1$ among sets of given area, in particular
\begin{equation}\label{ratio}
P^2(\Omega) \mu_1(\Omega) = \frac{P^2(\Omega)}{|\Omega|}\mu_1(\Omega)|\Omega|\leq \frac{P^2(\Omega)}{|\Omega|} \pi (j'_{1,1})^2,
\end{equation}
where $j'_{1,1}$ is the first zero of the derivative of the first Bessel function $J_1$. 

In order to conclude, we need to solve, for every $a\in [1/4, 1/3]$,
$$
\max_{\Omega \in \mathcal C_a} \frac{P^2(\Omega)}{|\Omega|}.
$$
This shape optimization problem is of isoperimetric type, since one can fix the perimeter of the admissible shapes. Contrary to the classical isoperimetric problem, here the area functional is minimized. In this sense, we call such a problem a reverse isoperimetric problem. The study of this kind of functional is interesting in itself, therefore we state a more general result. We also point out that the statement is valid for $a$ in a wider range.

\begin{proposition}[A reverse isoperimetric problem]\label{prop-reverse}
Let $a \in [0,1/2]$. Let $\Psi,\Phi:\mathbb [0,+\infty[ \to \mathbb [0,+\infty[$ be two continuous functions with $\Psi$ increasing. Let $\omega$ be a convex subset of the triangle $ABC$ defined in \eqref{ABC} whose boundary is the union of the segment $AC$ with a curve $\Gamma$. Then the maximum
\begin{equation}\label{form}
\max_{\omega} \left\{\Psi(\mathcal H^1(\Gamma))\Phi(|\omega|)\right\}
\end{equation}
can be searched in the family of triangles with vertexes $A$, $B^*$ and $C$, with $B^*$ belonging either to $AB$ (for $a\leq 1/3$) or to $BC$ (for $a\geq 1/3$).
\end{proposition}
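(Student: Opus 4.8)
The plan is to reduce to a one–dimensional problem and then to a convexity argument. Since $\Psi$ is continuous and increasing and $\Phi\ge 0$, grouping competitors according to the value $v=|\omega|\in[0,|ABC|]$ of the area gives
\[
\sup_{\omega}\Psi(\mathcal{H}^1(\Gamma))\,\Phi(|\omega|)=\sup_{v}\Phi(v)\,\Psi\!\Big(\sup\{\mathcal{H}^1(\Gamma)\,:\,|\omega|=v\}\Big),
\]
so the whole statement reduces to identifying, for each fixed $v$, the convex $\omega\subset ABC$ of area $v$ with boundary $AC\cup\Gamma$ that maximizes the length $\mathcal{H}^1(\Gamma)$, and to showing that such a maximizer is a triangle $AB^*C$ with $B^*$ on $AB$ when $a\le 1/3$ and on $BC$ when $a\ge 1/3$.

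Fix $v$. After rotating so that $AC$ lies on the horizontal axis (with $B$ above it), every admissible $\omega$ is the region under the graph of a concave function $g:[0,\overline{AC}]\to[0,+\infty)$ vanishing at the endpoints (this last fact uses that $ABC$ has acute angles at $A$ and $C$) and satisfying $g\le\psi$, where $\psi$ is the concave piecewise–affine profile of the broken line $A\to B\to C$; moreover $\mathcal{H}^1(\Gamma)=\int_0^{\overline{AC}}\sqrt{1+g'^2}$ and $|\omega|=\int_0^{\overline{AC}}g$. Because $0\le g\le\psi$ with $\psi$ a fixed Lipschitz tent, one has $|g'|\le\max(\tan\widehat{BAC},\tan\widehat{BCA})$ uniformly, so the set $K_v$ of admissible profiles of area $v$ is convex and compact in $C^0$; the arc–length functional $g\mapsto\int\sqrt{1+g'^2}$ is convex (since $p\mapsto\sqrt{1+p^2}$ is) and continuous on $K_v$ (by dominated convergence, using the uniform Lipschitz bound and the pointwise convergence of derivatives of concave functions). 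Hence, by Bauer's maximum principle, it attains its maximum over $K_v$ at an extreme point.

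The key step is to check that the extreme points of $K_v$ are exactly the tent profiles, i.e.\ the triangles $AB^*C$ with $B^*\in\overline{ABC}$ and $|AB^*C|=v$. One direction: on any subinterval where $g<\psi$ and $g$ is not affine, one can add a small zero–average perturbation preserving concavity and all constraints which moves in both directions, so an extreme point must be piecewise affine; and if $g$ touches the open segment $AB$ at an interior point, then since $\psi$ is affine there and $g$, being concave, lies between its chord and $\psi$, it must equal that chord, so $\Gamma$ contains the whole subsegment from $A$ to that point (and likewise for $BC$ down to $C$) — one then checks that a piecewise–affine admissible profile with more than one "free kink" is not extreme. Conversely, a tent $AB^*C$ of area $v$ is extreme: its area equals $\tfrac12\,\overline{AC}\cdot\mathrm{dist}(B^*,(AC))$, which pins the height of $B^*$, and the active constraints (affinity on each of the two pieces) together with this height already determine it, so it cannot be written as a nontrivial convex combination of elements of $K_v$. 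This extreme–point analysis, in particular the interaction with the sides $AB$ and $BC$, is where I expect the main difficulty to lie.

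Finally, among tents $AB^*C$ of area $v$ the vertex $B^*$ runs over the intersection of $ABC$ with the line parallel to $AC$ at distance $2v/\overline{AC}$, and along this segment $B^*\mapsto\overline{AB^*}+\overline{B^*C}$ is convex, being a sum of two distances; hence its maximum is attained at one of the two endpoints of the segment, which lie respectively on $AB$ and on $BC$. Therefore $\sup\{\mathcal{H}^1(\Gamma):|\omega|=v\}=\max(\ell_{AB}(v),\ell_{BC}(v))$, where $\ell_{AB}(v)$ and $\ell_{BC}(v)$ denote these two endpoint values; both equal $\overline{AC}$ for $v=0$ and $\overline{AB}+\overline{BC}$ for $v=|ABC|$. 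Using the explicit coordinates of $A$, $B$, $C$ — the comparison at infinitesimal area reducing to $\tan(\widehat{BAC}/2)$ versus $\tan(\widehat{BCA}/2)$, equivalently $\overline{BC}$ versus $\overline{AB}$, equivalently $a$ versus $1/3$ — a direct computation shows that $\ell_{AB}(v)\ge\ell_{BC}(v)$ for every $v$ when $a\le 1/3$ and the reverse inequality when $a\ge 1/3$. Plugging this into the first displayed identity yields exactly that the maximum in \eqref{form} can be searched among the triangles with $B^*$ on the prescribed side. This last, uniform–in–$v$ comparison is the main computational point.
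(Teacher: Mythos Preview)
Your overall strategy --- fix the area $v$, maximize $\mathcal H^1(\Gamma)$ over convex $\omega\subset ABC$ with $|\omega|=v$, then show this maximum is realized by a triangle $AB^*C$ with $B^*$ on the correct side --- is sound and is exactly the reduction the paper performs. Your final two steps are also correct and correspond to the paper's Step~3: the observation that along the segment of admissible apices at fixed height, $B^*\mapsto \overline{AB^*}+\overline{B^*C}$ is convex (sum of two norms), hence maximal at an endpoint, is elegant and slightly slicker than the paper's direct perturbation; and the comparison of the two endpoint values according to $a\lessgtr 1/3$ is the same explicit computation.

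The gap is in the Bauer step. Your claim that the extreme points of $K_v$ are exactly the tents $AB^*C$ is false. Consider the profile $g$ that coincides with $\psi$ on $[0,x_0]$ (with $x_0<x_B$), is affine on $[x_0,x_1]$, and affine on $[x_1,L]$, where the kink at $x_1$ lies strictly below $\psi$. This is a \emph{quadrilateral} $AP_0P_1C$ with $P_0\in AB$ and $P_1$ in the interior, not a triangle. It is extreme in $K_v$: if $g=\tfrac12(g_1+g_2)$ with $g_i\in K_v$, then $g_i=\psi$ on $[0,x_0]$ (since $g_i\le\psi$ with equality of the average), and $g_i$ is affine on each of $[x_0,x_1]$ and $[x_1,L]$ (average of two concave functions equal to an affine function forces both to be affine). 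Thus $g_i$ is pinned at $x_0$ and at $L$, and is free only at $x_1$; but the single area constraint $\int g_i=v$ then forces $g_i(x_1)=g(x_1)$, hence $g_i=g$. The same reasoning shows that ``trapezoids'' $AP_0P_2C$ with $P_0\in AB$, $P_2\in BC$ and a single affine piece between them, and ``pentagons'' with one further interior kink, are extreme as well. Your own sketch (``at most one free kink'') is in fact consistent with all of these, but your conclusion ``extreme points $=$ tents'' does not follow from it.

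Because of these extra extreme points, Bauer's principle only tells you the maximizer lies among polygons with at most one interior vertex and possibly one vertex on each of $AB$, $BC$. To eliminate the interior vertex you still need a parallel chord movement (slide $P_1$ along the line parallel to $AC$ at height $h_{P_1}$: this preserves area and strictly increases perimeter in one direction), and to rule out the remaining trapezoid $AP_0P_2C$ with $P_0\in\mathrm{int}(AB)$, $P_2\in\mathrm{int}(BC)$ you need a further perturbation keeping the area fixed. These are precisely Steps~1 and~2 of the paper's proof, which carries them out directly on polygons without invoking Bauer at all. So the Bauer route, while conceptually appealing, does not bypass the geometric work; once you add the missing arguments, the two proofs converge.
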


\begin{proof} Let us denote for by $T_{ABC}$ the triangle $ABC$. We first notice that the maximum exists: since by construction the area and the perimeter of admissible configurations are bounded, then the supremum of $\Psi(\mathcal H^1(\Gamma))\Phi(|\omega|)$ is finite. The existence of an optimal configuration follows by combining the compactness of the admissible sets with respect to the complementary Hausdorff convergence, the continuity of perimeter and area in the class, and the continuity of $\Psi$ and $\Phi$ in $\mathbb R^+$.

Without loss of generality, we may search for maximizers in the dense subclass of convex polygons inside $T_{ABC}$, having $A$ and $C$ among their vertexes: a maximizer among polygons is also a maximizer among convex sets. The proof is divided into three steps.

\smallskip

\noindent \textit{Step 1.} Arguing by contradiction, it is easy to see that no vertex of an optimal shape is in the interior of $T_{ABC}$: let $Q_1, Q_2, Q_3$ be three vertexes with $Q_2$ in the interior of $T_{ABC}$. We now perform a {\it parallel chord movement}: consider a small perturbation of the set obtained by sliding the point $Q_2$ on the line parallel to $Q_1Q_3$ passing through $Q_2$: since $Q_2$ is in the interior, the deformation can be done in both directions, either towards $Q_1$ or towards $Q_3$. In both cases, convexity and area are preserved; but, at least in one of the two directions, the perimeter increases, giving the contradiction.

\smallskip

\noindent \textit{Step 2.} The previous step implies that the maximizers have at most 4 vertexes: $A$, $C$, one point $Q_1$ in $AB$ and one point $Q_2$ in $BC$. Let us exclude the case in which $Q_1$ and $Q_2$ are (both) in the interior of the segments $AB$ and $BC$, respectively. They are of the form $Q_1= A + t e_1$, for some $t\in ]0,a/2[$, and $Q_2= C + s \nu$, for some $s\in  ]0,1/2-a[$, where $e_1=(1,0)$ and $\nu=(-1/2, \sqrt{3}/2)$ are the unit vectors in the direction of the segments $AB$ (from $A$ to $B$) and $CB$ (from $C$ to $B$), respectively. Let us perturb the configuration keeping the area constant as follows: we replace $Q_1$ and $Q_2$ with
\begin{equation}
Q_1^\e:= A + (t+\e) e_1, 
\quad
Q_2^\e:=C + (s+\delta(\e))\nu,
\end{equation}
for $\e\in \mathbb R$ small and for a suitable $\delta(\e)$. Imposing that the corresponding shape $\omega_\e$ has the same area, we deduce the value of $\delta(\e)$:
$$
\frac{\mathrm{d}}{\mathrm{d}\e}|\omega_\e|_{\lfloor_{\e=0}} = 0 \quad  \Leftrightarrow \quad  \frac{\mathrm{d}}{\mathrm{d}\e}  \left[\left( \frac{a}{2}-t-\e\right)\left(\frac12 - a -s -  \delta(\e) \right)\right]_{\lfloor_{\e=0}} =0,
$$
so that 
$$
\delta(\e)=-2x\e + o(\e), \quad \hbox{with}\quad x:= \frac{\sigma}{\tau}, \quad \hbox{being}\quad \sigma := \frac14 - \frac{a}{2} - \frac{s}{2}, \quad \tau:= \frac{a}{2}-t.
$$
Notice that $\tau \in ]0,a/2[$, $\sigma \in ]0,1/4 - a/2[$, and $x \in ]0, +\infty[$.
As for the perimeter term, namely the length of the curve $\Gamma_\e$, the following expression holds:
$$
\mathcal H^1(\Gamma_\e) 
=  t + \e + s + \delta(\e) + \sqrt{ \left(\tau + \sigma - \e - \frac12 \delta(\e) \right)^2 + \frac{3}{4}\left( 2\sigma  - \delta(\e)\right)^2}.
$$
Let us now compute the derivative of the perimeter with respect to $\e$, avaluated at $\e=0$:
\begin{align*}
\frac{\mathrm{d}}{\mathrm{d}\e} \mathcal H^1(\Gamma_\e)_{\lfloor_{\e=0}} &  =  1 + \delta'(0) + \frac{  \left[2 (\tau + \sigma) (-1-\delta'(0)/2) + 3 \sigma (-\delta'(0))  \right]  }{2\sqrt{   (\tau + \sigma)^2 + 3 \sigma^2}}
\\ & =  1 - 2 x  + \frac{(2x+1) (2x-1)}{\sqrt{ 1 + 4 x^2 + 2x}} 
\\
& =  (2x-1) \left[ \frac{2x+1}{\sqrt{ 1 + 4 x^2 + 2x}} -1 \right].
\end{align*}
Recalling that $x\in ]0, +\infty[$, we notice that the derivative above vanishes for $x=1/2$, it is negative in $]0,1/2[$ and positive in $]1/2, +\infty[$. In particular, as a function of $x$, the perimeter does not have any maximum in the interior of $\mathbb R^+$. This allows us to conclude that none of these configurations is optimal: indeed, for fixed values of $t$ and $s$, the choice of the sign of $\e$ makes the corresponding $x$ increase of decrease, and at least for one of these cases, the perimeter increases.

\smallskip

\noindent \textit{Step 3.} In view of the previous steps, we infer that, among polygons, maximizers have to be searched in a particular class of triangles: the ones with two vertexes in $A$ and $C$, and the third one either between $A$ and $B$ or between $B$ and $C$. Using the notation introduced above (Step 2), we describe the former type of point with a parameter $t\in [0,a/2]$ and denote it by $Q_1(t)$, and the latter with $s\in [0,1/2-a]$ denoted by $Q_2(s)$. Here we allow $t$ and $s$ to take the extremal values, corresponding to triangles degenerating to the segment $AC$ or coinciding with the whole triangle $T_{ABC}$. In particular, all the possible areas of subsets of $T_{ABC}$ are attained by both families of triangles. 
The maximization problem among polygons becomes scalar and a maximizer exists. In this step we compare the pairs of triangles $AQ_1(t)C$ and $AQ_2(s)C$ having the same area. Writing $Q_1(t)$ in coordinates 
$$
Q_1(t)= \left(t, \frac{\sqrt{3}}{2}(1-a) \right),
$$
and recalling the definition \eqref{ABC} of $A$ and $C$, we infer that the perimeter term $\mathcal H^1(\Gamma)$ of $AQ_1(t)C$ is
$$
p_1(t)  := t + \sqrt{\left(\frac14 - t\right)^2 + \frac34 \left(\frac12 - a \right)^2}.
$$
Similarly, writing $Q_2(s)$ in coordinates
 $$
Q_2(s)= \left(\frac14 - \frac{s}2, \frac{\sqrt{3}}{2} \left( \frac12  +  s \right)\right),
 $$
we deduce that the perimeter term of $AQ_2(s)C$ reads
$$
p_2(s) := s + \sqrt{ \left(\frac14 - \frac{s}{2}\right)^2 + \frac{3}{4} \left( s-\frac12 + a\right)^2}.
$$
Imposing that the areas coincide, we infer that, given $t$, $s$ has to be chosen as $s(t)= t (1-2a)/a$. We underline that, given $a$, this map is a bijection from the interval $[0,a/2]$, in which $t$ varies, and in interval $[0,1/2-a]$, in which $s$ varies. Thus we are led to study the sign of $p_1(t)-p_2(t(1-2a)/a)$: this function depends on two variables, $a$ varying in $]0,1/2[$ (for the extremal cases there is nothing to prove) and $t$ varying in $[0,a/2]$. In order to get rid of the dependence on $a$ of the domain of definition of $t$, let us introduce the variable $y:=2t/a\in [0,1]$. According to this definition, $t=ay/2$ and $s(t)=y (1-2a)/2$. A direct computation shows that the terms of $p_1-p_2$ can be rearranged as follows:
\begin{equation}\label{p1p2}
p_1(ay/2)-p_2(y(1-2a)/2)=\frac14[\varphi(y) - \tilde{\varphi}(y)],
\end{equation}
with
$$
\varphi(y):= 2(3a-1) y +  \sqrt{\left(1 - 2 ay\right)^2 + 3 \left(1 - 2a \right)^2}, \quad$$
$$ \tilde{\varphi}(y):= \sqrt{ \left(1 - y(1-2a) \right)^2 + 3 (1- y)^2(1-2a)^2}.
$$
The function $\tilde\varphi$ is clearly non negative. Let us prove that the function $\varphi$ is non negative, too. If $a\geq 1/3$ this is clearly true. If $a\leq 1/3$, then it is easy to see that $\varphi$ is decreasing in $y$, so that
$$
\varphi(y) \geq \varphi(1) = 2 (3a-1) + 2(1-2a) = 2a \geq 0.
$$
Therefore, we are allowed to compare the squares of these functions:  $\varphi^2(y) \geq \tilde{\varphi}^2(y)$ if and only if
$$
 4 y (1-3a)\left[2 (1-a(1+y)) - \sqrt{\left(1 - 2 ay\right)^2 + 3 \left(1 - 2a \right)^2} \right] \geq 0.
$$
The function in square brackets is always positive, since for every $y\in [0,1]$, $a\in [0,1/2]$, there holds $
(1-a(1+y)) \geq 0$ and
$$[2 (1-a(1+y))]^2 - \left[\left(1 - 2 ay\right)^2 + 3 \left(1 - 2a \right)^2 \right]  = 4a(1-2a)(1-y)\geq 0.
$$
Going back to \eqref{p1p2}, we infer that $p_1(t)>p_2(s(t))$ for $a<1/3$, $p_1(t) \equiv p_2(s(t))$ for $a=1/3$, and $p_1(t)<p_2(s(t))$ in the remaining case, $a>1/3$. For every fixed $a$ we have found an optimal triangle among the convex admissible shapes with the same area. The proof is concluded.
\end{proof}

\begin{lemma}\label{lem-SW}
For every $\Omega \in \mathcal C_a$, $a\in [1/4, 1/3]$, there holds
\begin{equation}\label{ratio2}
\frac{P^2(\Omega)}{|\Omega|}\leq \frac{P^2(H_a)}{|H_a|}.
\end{equation}
\end{lemma}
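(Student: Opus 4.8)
My plan is to deduce the lemma from Proposition \ref{prop-reverse}. The first step is to rewrite $P^2(\Omega)/|\Omega|$ in terms of the corner set $\omega:=\Omega\cap ABC$ attached to $\Omega$ as in \eqref{omega}. Writing $\Gamma:=\partial\omega\setminus AC$, so that $P(\omega)-\overline{AC}=\mathcal H^1(\Gamma)$, the perimeter formula \eqref{Pomega} gives $P(\Omega)=6\,\mathcal H^1(\Gamma)$, while by construction $|\Omega|=|H_a|+6|\omega|$; hence
\[
\frac{P^2(\Omega)}{|\Omega|}=\frac{36\,\big(\mathcal H^1(\Gamma)\big)^2}{|H_a|+6|\omega|}=\Psi\big(\mathcal H^1(\Gamma)\big)\,\Phi\big(|\omega|\big),
\]
with $\Psi(p):=36p^2$ (continuous and increasing) and $\Phi(A):=(|H_a|+6A)^{-1}$ (continuous and positive on $[0,+\infty)$, since $|H_a|>0$). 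As $\Omega$ runs over $\mathcal C_a$ the set $\omega$ runs over all convex subsets of $ABC$ whose boundary is $AC\cup\Gamma$, so Proposition \ref{prop-reverse} applies: since $a\in[1/4,1/3]\subset[0,1/2]$ and $a\le 1/3$, the maximum of $P^2(\Omega)/|\Omega|$ over $\mathcal C_a$ is attained at some $\Omega$ for which $\omega$ is a triangle $A\,Q_1(t)\,C$ with $Q_1(t)=\big(t,\tfrac{\sqrt3}{2}(1-a)\big)$, $t\in[0,a/2]$ (the endpoints $t=0$ and $t=a/2$ corresponding to $\Omega=H_a$ and $\Omega=\Omega_a$).

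It then remains to treat a one-variable problem. Using the coordinates of $A,B,C$ from \eqref{ABC}, for $t\in[0,a/2]$ and with the shorthand $h:=\tfrac{\sqrt3}{4}(1-2a)$ (the height of $A\,Q_1(t)\,C$ over the base $AQ_1(t)$) one computes
\[
\mathcal H^1(\Gamma_t)=t+\sqrt{\big(\tfrac14-t\big)^2+h^2}=:p_1(t),\qquad 6\,|\omega_t|=3th,
\]
while reading off the vertices of $H_a$ gives $|H_a|=\tfrac{\sqrt3}{8}(2-3a)$. Setting $G(t):=p_1(t)^2/(|H_a|+3th)$ and noting that $p_1(0)=\overline{AC}$ and $P(H_a)=6\overline{AC}$, so $G(0)=P^2(H_a)/(36|H_a|)$, the bound \eqref{ratio2} is precisely $G(t)\le G(0)$ for all $t\in[0,a/2]$.

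For this scalar inequality, clearing denominators and using $\big(\tfrac14-t\big)^2=\tfrac1{16}-\tfrac t2+t^2$ reduces $G(t)\le G(0)$, for $t>0$, to
\[
|H_a|\Big(2t-\tfrac12+2\sqrt{\big(\tfrac14-t\big)^2+h^2}\Big)\ \le\ 3h\Big(\tfrac1{16}+h^2\Big).
\]
The left-hand side is increasing in $t$ on $[0,a/2]\subset[0,\tfrac14]$, since its $t$-derivative equals $2-2(\tfrac14-t)/\sqrt{(\tfrac14-t)^2+h^2}>0$; hence it suffices to verify the inequality at $t=a/2$, where $\sqrt{(\tfrac14-\tfrac a2)^2+h^2}=\tfrac{1-2a}{2}$ and, after substituting $|H_a|$ and $h$, it collapses to $(1-3a)^2\ge 0$. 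Thus $G$ is maximized at $t=0$, i.e.\ at $\Omega=H_a$, which is \eqref{ratio2}.

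The genuinely new input is Proposition \ref{prop-reverse}; once the problem is cut down to the boundary family of triangles $A\,Q_1(t)\,C$ the rest is elementary algebra. I expect the only mildly delicate points to be the correct evaluation of $|H_a|$ and the observation that the $t$-derivative of the reduced left-hand side has a fixed sign, which is what lets us localize the maximum at a single value of $t$.
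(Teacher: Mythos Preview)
Your argument is correct and follows the same overall architecture as the paper: reduce to the one-parameter family of triangles $A\,Q_1(t)\,C$ via Proposition~\ref{prop-reverse}, then show the maximum of $G(t)=p_1(t)^2/(|H_a|+3th)$ on $[0,a/2]$ is at $t=0$. Where you differ is in this last step. The paper proves that $t\mapsto p(t)^2/D(t)$ is \emph{convex} on $[0,a/2]$ (a somewhat laborious computation with auxiliary functions $\phi_1,\phi_2$ and monotonicity checks), and then compares the two endpoint values. Your route is shorter: you rewrite $G(t)\le G(0)$ for $t>0$ as $|H_a|\,L(t)\le 3h\,p_1(0)^2$ with $L(t)=2t-\tfrac12+2\sqrt{(\tfrac14-t)^2+h^2}$, observe that $L$ is increasing on $[0,a/2]$, and verify the inequality at $t=a/2$, where it collapses to $(1-3a)^2\ge 0$. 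This avoids the convexity analysis entirely and isolates the equality case $a=1/3$ transparently. One small wording point: it is not literally true that ``as $\Omega$ runs over $\mathcal C_a$ the set $\omega$ runs over \emph{all} convex subsets of $ABC$ with $\partial\omega=AC\cup\Gamma$'' (the reflection must produce a convex $\Omega$), but this does not affect your argument, since you only need the inclusion of the $\mathcal C_a$-class into the admissible class of Proposition~\ref{prop-reverse} to get the upper bound.
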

\begin{proof} Let $\omega$ be the intersection of $\Omega$ with the triangle $ABC$ defined in \eqref{ABC}. Its boundary is the union of the segment $AC$ and a curve, that we denote here by $\Gamma$. By construction, $\Omega$ is the union of $H_a$ and 6 copies (reflections) of $\omega$, so that $|\Omega|= |H_a| + 6 |\omega|$ and $P(\Omega)= 6 \mathcal H^1(\Gamma)$. All in all, we may write
\begin{equation}\label{ratio3}
\frac{P^2(\Omega)}{|\Omega|} = \frac{36 (\mathcal H^1(\Gamma))^2}{|H_a| + 6|\omega|}.
\end{equation}
The right-hand side is of the form \eqref{form} and satisfies the assumptions of Proposition \ref{prop-reverse}. Therefore, recalling that $a\leq 1/3$,
we deduce that the optimal $\Omega$ is a hexagon, associated to an $\omega$ belonging to the 1-parameter family of triangles $AB(t)C$, being $B(t)$ the point of $AB$ at distance $t$ from $A$, $t\in [0,a/2]$. Such triangle has area $|\omega | = \sqrt{3}(1-2a)t/8$ and the length of $\Gamma$ is 
$$
p(t):=\overline{AB(t)} + \overline{B(t)C} = t +  \sqrt{\left( \frac{1}{4} - t \right)^2 + \frac34 \left(\frac12 -a\right)^2}.
$$
Using $|H_a| =\sqrt{3}(2 - 3a)/8$, we deduce that
\begin{equation}\label{maxt}
\frac{P^2(\Omega)}{|\Omega|} \leq 36 \frac{8}{\sqrt{3}} \max_{t\in [0,a/2]} \frac{p(t)^2}{(2 - 3a) + 6 (1-2a) t }.
\end{equation}
Let us prove that the function in the right-hand side is convex. To this aim, we introduce the auxiliary functions
$$
D(t):=(2-3a) + 6 (1-2a) t , \quad T(t):= \frac14 - t, \quad R(t):= \sqrt{T^2(t) + \frac34 \left(\frac12 -a\right)^2}.
$$
According to this notation $p(t)=t + R(t)$ and the function in the right-hand side of \eqref{maxt} is $p^2(t)/D(t)$. Exploiting the fact that
$$
p'(t)=\frac{R(t) - T(t)}{R(t)} = \frac{p(t)-1/4}{R(t)}, 
\quad p''(t)=  p'(t) \frac{R(t)+T(t)}{R^2(t)},
\quad D''(t)=0,
$$
we get
\begin{align}
\frac{\mathrm{d}^2}{\mathrm{d}t^2} \frac{p^2(t)}{D(t)} \notag
&
= \frac{2}{D^3} \left[(p')^2D^2 + p p''D^2 - 2 p p' D' D + (D')^2p^2 \right]\notag
\\
& \geq \frac{2}{D^2} \left[(p')^2D + p p''D - 2 p p' D' \right]\notag
\\
&=  \frac{2p'}{D^2} \left[\frac{p-1/4}{R} D + pD \frac{(R+T)}{R^2} - 2 p D'  \right]\notag
\\
&=  \frac{2p'}{D^2} \left[2p\left(\frac{D}{R} - D' \right)  - D \left(\frac{1}{4R} - \frac{pT}{R^2}\right)   \right].
\label{stima}
\end{align}
Setting $\psi(t):= 2p'(t)/D^2(t)$ and 
$$\phi_1(t):=2\frac{p(t)}{R(t)}\left[D(t) - D'(t)R(t) \right] ,
\quad \phi_2(t):= \frac{D(t)}{4 R^2(t)}\left[R(t) - 4 p(t)T(t)\right],
$$
the last term in \eqref{stima} is $\psi(\phi_1-\phi_2)$. It is immediate to check that $\psi\geq 0$: this is a consequence of $R(t)\geq |T(t)|>T(t)$, giving $p'(t)\geq 0$. In particular, we deduce that 
$$
\frac{\mathrm{d}^2}{\mathrm{d}t^2} \frac{p^2(t)}{D(t)} \geq \psi(t) \left[ \min_{[0,a/2]} \phi_1 - \max_{[0,a/2]} \phi_2\right].
$$
The two functions $\phi_1$ and $\phi_2$ are both increasing, since
\begin{align*}
\phi_1'(t)  & = \frac{2}{R^3} \left[ D (R^2 + t T) + D' R^2 (t+T) \right] \geq 0,
\\
\phi_2'(t) &= \frac{p'}{R^3}  \left[  t (D' R^2 + 2TD) + Dp R  \right] \geq 0.
\end{align*}
This implies that
\begin{align*}
\min_{[0,a/2]} \phi_1 & = \phi_1(0)=2\Big(2-3a-3(1-2a)\sqrt{1-3a+3a^2}\Big).
\\
\max_{[0,a/2]} \phi_2 & = \phi_2(a/2)=\frac{a(1-3a^2)}{(1-2a)}.
\end{align*}
Evaluating $\min {\phi_1} - \max \phi_2$ for $a$ in the range $[1/4, 1/3]$, we infer that the second order derivative of $p^2/D$ is positive, thus the function is convex. In particular, its maximum is either at $0$ or at $a/2$. A direct computation shows that 
$$
\frac{p^2(0)}{D(0)} -  \frac{p^2(a/2)}{D(a/2)} =   \frac{ 1-3a+3a^2}{4(2-3a)} - \frac{(1-a)^2}{8(1-3a^2)} = 
\frac{a (1-2a) (1-3a)^2}{8 (1-3a^2) (2-3a)} >0.
$$
The maximizer $t=0$ corresponds to $\omega=\emptyset$ in the ratio \eqref{ratio3}, associated to $H_a$. This concludes the proof.
\end{proof}

Let us now maximize the right-hand side of \eqref{ratio2}: recalling that 
$$
P(H_a)=3 \sqrt{1 - 3 a + 3a^2}, \quad |H_a| = \frac{\sqrt{3}}{8}(2 - 3a),
$$
and exploiting the decreasing monotonicity of the function
$$
[1/4, 1/3] \ni x\mapsto \frac{1-3x+3x^2}{2-3x},
$$
we infer that $P^2(H_a)/|H_a|\leq P^2(H_{1/4})/|H_{1/4}|$. Thus \eqref{ratio} and \eqref{ratio2} give
\begin{equation}\label{case2}
P^2(\Omega) \mu_1(\Omega) \leq \pi (j'_{1,1})^2  \frac{P^2(H_{1/4})}{|H_{1/4}|} = \pi (j'_{1,1})^2 \frac{42 \sqrt{3}}{5} < 50 \pi,
\end{equation}
where in the last inequality we have used the fact that $(j'_{1,1})^2<3.4$. The last term is below $16\pi^2$, and this gives the proof of Proposition \ref{prop-3sym} in the range $a\in[1/4,1/3]$.

\appendix
\section{Proof of Lemma \ref{lemmas}}
\begin{proof}
Following Lemma \ref{lemma8.2}, the function $U$ is given by
$$U=3\left(\frac{3}{2}[(1-k)^2+k^2] + (1-k)^2 \varphi +k^2 \eta -2k(1-k) \left[\varphi + \frac{\psi}{2}\right]\right),
$$
where $\varphi, \psi, \eta$ are the functions defined in \eqref{fipsieta}. It follows that
$$
 \frac{\partial U}{\partial x}=12\pi g(k;x,y) \sin 2\pi x ,
$$
with
\begin{align*}
 g(k;x,y):= & (1-k)(1-3k)\cos \frac{2\pi y}{\sqrt{3}}-4k^2 \cos \frac{4\pi y}{\sqrt{3}}  \cos 2\pi x 
\\
& + k(1-k)\left[2\cos 2\pi x-\cos \frac{6\pi y}{\sqrt{3}}\right].
\end{align*}
Since $x\in [0,\frac{1}{4}]$, we have to prove that $g(k;x,y)\leq 0$ on the triangle $ABC$ or more generally, on the triangle
$\mathbb{T}$ limited by the lines $x= 0, y=\frac{\sqrt{3}}{4}, y=\sqrt{3}(\frac{1}{2} -x)$. We recall that 
$y\in [\frac{\sqrt{3}}{4}, \frac{\sqrt{3}}{2}]$ and therefore, $\cos \frac{2\pi y}{\sqrt{3}} \leq 0$ on this triangle.

We look first at the dependence in $k$. The derivative of $g(k;x,y)$ with respect to $k$ is of the kind $A_1k+A_0$ with
$$A_1=2\left(3 \cos \frac{2\pi y}{\sqrt{3}} -4 \cos 2\pi x \cos \frac{4\pi y}{\sqrt{3}}   -2\cos 2\pi x+
\cos \frac{6\pi y}{\sqrt{3}}  \right)$$
$$A_0=-4\cos \frac{2\pi y}{\sqrt{3}}  +2\cos 2\pi x-\cos \frac{6\pi y}{\sqrt{3}}  .$$
Letting $C=\cos \frac{2\pi y}{\sqrt{3}} $, $A_0$ can also be written as
$$A_0=-4C^3-C+2\cos 2\pi x$$
and then $A_0\geq 0$ since $C\leq 0$. Moreover, choosing $k=1/6$, we get
$$\frac{A_1}{6} +A_0=-\frac83 C^3 +\frac83 \cos 2\pi x [1-C^2]-C \geq 0.$$
Therefore, since this derivative is affine in $k$, $\frac{\partial g}{\partial k} \geq 0$ for all $k\in [0,1/6]$. Consequently 
$g(k;x,y) \leq g(\frac16;x,y)$ for these values of $k$.
Now, keeping the same notation 
$$36 g(1/6;x,y)=-20C^3-8\cos 2\pi x C^2 +30C+14\cos 2\pi x.$$
The factor in front of $\cos 2\pi x$ being positive, we can estimate $\cos 2\pi x$ by its biggest value on the triangle $ABC$, namely
on the segment $AC$. This means that we can replace $2\pi x$ by $\pi(1-a-\frac{2y}{\sqrt{3}})/(1-2a)$. Now this quantity is decreasing
in $a$ (for $y\geq \sqrt{3}/4$), therefore we can replace $a$ by $1/4$ that yields
$$\cos 2\pi x \leq \cos \pi \frac{1-a-\frac{2y}{\sqrt{3}}}{1-2a}\leq \cos\pi\left(\frac32-\frac{4y}{\sqrt{3}}\right)=
-\sin \frac{4\pi y}{\sqrt{3}}.$$
Thus, it remains to prove that, for $y\in [\frac{\sqrt{3}}{4}, \frac{\sqrt{3}}{2}]$
$$15\cos \frac{2\pi y}{\sqrt{3}} -\sin \frac{4\pi y}{\sqrt{3}} \left[10-4\cos \frac{4\pi y}{\sqrt{3}} \right]
-5\cos \frac{6\pi y}{\sqrt{3}} \leq 0.$$
Setting $Y=2\pi y/\sqrt{3} \in [\pi/2, \pi]$ and $Y=\pi/2 +u$ with $u\in [0,\pi/2]$, we are led to consider
\begin{align*}
& -15\sin(u)+\sin(2u)[10+4 \cos(2u)]-5\sin(3u) 
\\ & =\sin u\left(16\cos^3u-20\cos^2u+12\cos u-10\right)
\end{align*}
the right-hand side can be written as $-2+(\cos u-1)(16\cos^2u-4\cos u+8)$, showing that it is always negative. This finishes the proof
of $\frac{\partial U}{\partial x} \leq 0$.

\medskip
Let us now consider the derivative $\frac{\partial U}{\partial y}$. Setting $C=\cos \frac{2\pi y}{\sqrt{3}}$, it can be written as
$$
\frac{\partial U}{\partial y}=\frac{12\pi}{\sqrt{3}}g_2(k;x,y) \sin \frac{2\pi y}{\sqrt{3}},
$$
with
\begin{align*}
g_2(k;x,y):=&(1-k)(1-3k)\left[\cos 2\pi x-2C\right]-
4k^2\left[4C^3-2C+\cos 4\pi x C\right]
\\ & - 3k(1-k)\cos 2\pi x\left[4C^2-1\right].
\end{align*}
As previously, we start by looking at the dependence in $k$. The derivative of $g_2(k;x,y)$ with respect to $k$ is of the kind $B_1k+B_0$ with
$$B_1=4C(1- 2\cos 4\pi x)+24C^2\cos 2\pi x-32C^3$$
$$B_0= 8C-\cos 2\pi x (1+12C^2).$$
We see that $B_0\leq 0$ and 
$$\frac{B_1}{6}+B_0=\frac{2C}{3}[13-2\cos 4\pi x-8C^2]-\cos 2\pi x[1+8C^2] \leq 0.$$
Therefore, $\frac{\partial g_2}{\partial k} \leq 0$ for all $k\in [0,1/6]$ that yields $g_2(k;x,y)\geq g_2(\frac16;x,y)$. Now we have
$$36 g_2(1/6;x,y)= 30\cos 2\pi x [1-2C^2]-2C[11+2\cos 4\pi x]-16C^3.$$
All the terms of the right-hand side are non negative, except possibly the first one if $C^2>1/2$. Let us consider that case
and estimate then $36g_2$ by
$$36 g_2(\frac16;x,y)\geq  30 [1-2C^2]-18C-16C^3>0\quad \mbox{for } C\in [-1, -1/{\sqrt{2}}]$$
this proves that $\frac{\partial U}{\partial y}  \geq 0$ on the domain.

\medskip
Now let us prove that the integral $I(s):=\int_{AQ(s)C} U(x,y)\dx\dy$ is decreasing in $s$. We can compute the derivative of $I(s)$
using the tool of shape derivative, see \cite[Theorem 5.2.2]{Henrot-Pierre}. It comes
\begin{equation}\label{derivI}
I^\prime(s)=H \int_0^1 u\left[U(u Q(s)+(1-u)C)-U(u Q(s)+(1-u)A)\right] \,\mathrm{d}u
\end{equation}
where $H$ is the height of the triangle $AQ(s)C$ issued from $Q(s)$. Let us observe that, for all $u\in [0,1]$, the two points $u Q(s)+(1-u)C$
and $u Q(s)+(1-u)A$ belong to a parallel to $AC$. Now, since the derivatives $\frac{\partial U}{\partial x}$ and $\frac{\partial U}{\partial y}$
are respectively negative and positive, we see that $U$ decreases along such a parallel, namely $U(u Q(s)+(1-u)C)-U(uQ(s)+(1-u)A) \leq 0$.
This proves that $I^\prime(s)\leq 0$ and the thesis.

\end{proof}

\section{Proof of Lemma \ref{smalla}}
\begin{proof}
We start by computing all the quantities appearing in $F(a,c)$, recalling that here $k=0$ and the test function $v_k$ is the eigenfunction $u_1$. According to Lemma \ref{lemmas}, we have replaced
$Q(s)$ by $Q_2(c)$ in the denominator $D(a,c)$. 
Writing the coordinates of the points $A,Q_1(c),Q_2(c),C$
$$
A=\left(0,\frac{\sqrt{3}(1-a)}{2}\right) ,\quad Q_1(c)=\left(\frac{a(1-2c)}{2(1-2a)},\frac{\sqrt{3}(1-a)}{2}\right) ,\quad
$$
$$
Q_2(c)=\left(\frac{c}{2}, \frac{\sqrt{3}(1-c)}{2}\right), \quad C=\left(\frac{1}{4},\frac{\sqrt{3}}{4}\right),$$
we make the dependence on $a$ and $c$ of $\widetilde{P}(a,c):=6\, [ \overline{AQ_1(c)} + \overline{Q_1(c)Q_2(c)(t)} + \overline{Q_2(c)C} ]$ explicit:
\begin{equation}\label{ptildeexp}
\widetilde{P}(a,c)=\frac{6}{1-2a}\left(\frac{(1-2c)(1-a)}{2} + (c-a) \sqrt{1-3a+3a^2}  \right).
\end{equation}
Now, the integrals defining $N(a,c)$ and $D(a,c)$ involve the symmetric part of $|\nabla u_1|^2$ and $u_1^2$ respectively that are given 
by $V_1$ and $U_1$ (defined in \eqref{defVall}, \eqref{defUall}). The formulas in Lemma \ref{lemma8.2} involve the same function
$\varphi(x,y)$ defined in \eqref{fipsieta}. We will compute the desired integral by writing the domains of integration as
$\Omega_a\setminus (Q_1(c)BQ_2(c))$ and $\Omega_a\setminus (ABQ_2(c))$ respectively. Therefore, by setting $T_N:=Q_1(c)BQ_2(c)$ and $T_D:=ABQ_2(c)$, we write
$$\int_{H_a}+2\int_{AQ_1(c)Q_2(c) C}=\int_{\Omega_a}-2\int_{T_N} \quad\mbox{ and }\quad  \int_{H_a}+2\int_{AQ_2(c)C}=\int_{\Omega_a}-2\int_{T_D}.$$
Thus, we are led to compute the integral of $\varphi$ on both triangles $T_N$ and $T_D$. We set
$$I_1(a,c):=\int_{T_N} \varphi(x,y)\dx\dy, \qquad I_2(a,c):=\int_{T_D} \varphi(x,y)\dx\dy.$$
A direct computation provides
\begin{equation}\label{eqi1}
\begin{array}{c}
I_1(a,c)=\frac{\sqrt{3}}{8\pi^2}\left\lbrace  \frac{c-a}{2a-1} 2\pi a\sin 2\pi a+\frac{1-a}{1-2a} (\cos 2\pi a-\cos 2\pi c) \right.\ldots\\
+\left.\frac{1-2a}{a} \left(1-\cos \frac{2\pi a(c-a)}{1-2a}\right) + \frac{1-2a}{1-a}\left(\cos 2\pi c -\cos \frac{2\pi a(1-a-c)}{1-2a}\right)\right\rbrace.
 \end{array}
\end{equation}
and
\begin{equation}\label{eqi2}
\begin{array}{c}
I_2(a,c)=\frac{\sqrt{3}}{8\pi^2}\left\lbrace -\pi a\sin 2\pi a+\frac{2c-a}{2(c-a)} (\cos 2\pi a-\cos 2\pi c) \right.\ldots\\
+\left. 2(c-a) \left(\frac{1-\cos \pi a}{a} +\frac{\cos 2\pi c -\cos \pi a}{2c-a}  \right)\right\rbrace.
 \end{array}
\end{equation}
Now, taking into account the constant terms in $U_1$ and $V_1$ together with the areas
$$|T_N|=\frac{\sqrt{3}}{4}  \frac{a(c-a)^2}{1-2a}
\quad\mbox{and}\quad |T_D|=\frac{\sqrt{3}}{8} a(c-a),$$
and using $F_2(a)$ and $F_1(a)$ given in Lemma \ref{lem-F12}, we get
\begin{equation}\label{Nac}
\begin{array}{c}N(a,c)=\frac{\sqrt{3}}{3} \left\lbrace 2\pi^2(1-3a^2)+1-\cos 2\pi a \ldots \right.
\\ \left.+2\pi a\sin 2\pi a -12\pi^2 \frac{a(c-a)^2}{1-2a}\right\rbrace
+\frac{16\pi^2}{3} I_1(a,c) \quad 
\end{array}
\end{equation}
and 
\begin{equation}\label{Dac}
\begin{array}{c}
D(a,c)=\frac{3\sqrt{3}}{8}\left\lbrace 1-3a^2-3a(c-a) \ldots\right.
\\
\left. -\frac{1}{\pi^2}\left(1-\cos 2\pi a+2\pi a\sin 2\pi a\right)\right\rbrace -6I_2(a,c).  \quad 
\end{array}
\end{equation}
Now we want to estimate from above each term in $F(a,c)=\widetilde{P}^2(a,c) N(a,c) - 16\pi^2 D(a,c)$ in order to get a bound like
$F(a,c)\leq f_1(c)a+f_2(c)a^2$ (note that $F(0,c)=0$) with $f_1(c)<0$ for all $c$. This will allow us to claim that $F(a,c)\leq 0$
as soon as $a\leq -f_1(c)/f_2(c)$ and it will be enough to bound from below this quantity $ -f_1(c)/f_2(c)$. 

Let us start with $\widetilde{P}(a,c)$. Using the inequality $\sqrt{1-u}\leq 1-u/2$ (for $u\leq 1$), we get
$$
\widetilde{P}(a,c) \leq \frac{3}{1-2a}\left[1-3a-ac+3a^2(1+c-a)\right]\leq 3\left[1-(1+c)a+(1+c) a^2\right].
$$
By squaring, we get
\begin{equation}\label{boundP}
\widetilde{P}^2(a,c) \leq 9\left[1-2(1+c)a+(1+c)(3+c) a^2\right].
\end{equation}
In the sequel we assume that $a\leq a_0$ (later, we will choose $a_0=1/60$ on the one hand and $a_0=1/4$ on the other hand). We obtain successively
\begin{align*}
\bullet &\ \frac{c-a}{2a-1} 2\pi a \sin 2\pi a \leq 0,
\\
\bullet &\ (1-\cos 2\pi c)\left[\frac{1-a}{1-2a} - \frac{1-2a}{1-a}\right] \leq (1-\cos 2\pi c) a \left(2+\frac{3-4a_0}{(1-a_0)(1-2a_0)}\,a\right). 
\end{align*}
Moreover, using $u^2/2-u^4/24 \leq 1-\cos u \leq u^2/2$, we get
\begin{align*}
\bullet & \ \frac{1-a}{1-2a} (\cos 2\pi a -1)\leq -2\pi^2a^2, 
\\
\bullet & \   -  12 \pi^2 \frac{a(c-a)^2}{1-2a} +2\frac{1-2a}{a} \left(1-\cos \frac{2\pi a(c-a)}{1-2a}\right) \leq  - 8 \pi^2 (c^2a - 2ca^2),
\\
\bullet & \ \frac{1-2a}{1-a}\left(1 -\cos \frac{2\pi a(1-a-c)}{1-2a}\right)\leq 2\pi^2 a^2 \frac{(1-a-c)^2}{(1-a)(1-2a)} \leq 2\pi^2 a^2 M(a_0,c), 
\end{align*}
with $M(a_0,c)=\max\left((1-c)^2,\frac{(1-a_0-c)^2}{(1-a_0)(1-2a_0)}\right)$. Here we have used the convexity of the map $a\mapsto (1-a-c)^2/[(1-a)(1-2a)]$. Gathering all these bounds, we finally get
$$
N(a,c)\leq \frac{2 \sqrt{3}}{3} \left\lbrace \pi^2+  a\left(2(1-\cos 2\pi c)-4\pi^2 c^2\right) +A_1(a_0,c) a^2 \right\rbrace
$$
where
$$
A_1(a_0,c)= \frac{3-4a_0}{(1-a_0)(1-2a_0)}\ (1-\cos 2\pi c)+2\pi^2\left(M(a_0,c)-1+4c\right).
$$
Multiplying by the inequality \eqref{boundP}, we finally obtain
$$
\widetilde{P}^2(a,c) N(a,c)\leq 6\sqrt{3}\left\lbrace \pi^2 + a \left(2(1-\cos 2\pi c)-2\pi^2(1+c+2c^2)\right) + a^2 A_2(a_0,c)\right\rbrace
$$
with
$$
A_2(a_0,c)= A_1(a_0,c)+\pi^2(3+4c+9c^2+8c^3)-4(1+c)(1-\cos 2\pi c).
$$
Now we need to estimate from below the denominator $D(a,c)$. For that purpose, we need the following result.
\begin{lemma}\label{lemmaG}
Let $G(a,c)$ defined by
$$G(a,c)=\frac{2c-a}{2(c-a)} (\cos 2\pi a-\cos 2\pi c) - \frac{2(c-a)}{2c-a} (\cos \pi a - \cos 2\pi c) .$$
Then
\begin{equation}\label{boundlemma}
G(a,c) \leq \frac{1-\cos 2\pi c}{c} \,a .
\end{equation}
\end{lemma}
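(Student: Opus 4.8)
The plan is to turn the inequality into an equivalent algebraic–trigonometric one, to eliminate the variable $c$ by a convexity argument, and to finish with an elementary one–variable estimate. We work in the range $0\le a\le c\le 1/2$ in which the lemma is applied; since $G$ extends continuously to $a=c$ it is enough to treat $0<a<c\le 1/2$ (and $a=0$ is trivial), and we note $3c-2a\ge c>0$. First I would clear denominators: multiplying $G(a,c)\le \frac ac(1-\cos 2\pi c)$ by the positive number $2c(c-a)(2c-a)$ and simplifying — writing $\cos 2\pi a-\cos 2\pi c$ and $\cos\pi a-\cos 2\pi c$ in terms of $1-\cos\pi a,\ 1-\cos 2\pi a,\ 1-\cos 2\pi c$, using $1-\cos 2\pi a=2(1-\cos\pi a)(1+\cos\pi a)$, and using $a(4c-3a)+4(c-a)^2=(2c-a)^2$ and $(2c-a)^2-(c-a)^2=c(3c-2a)$ — one reduces the claim to the equivalent inequality
\[
2\,\psi_1(a)^2(c-a)^2\ \le\ c(3c-2a)\bigl[\Psi(a)-\Psi(c)\bigr],
\]
where $\psi_1(t):=\frac{1-\cos\pi t}{t}$, $\phi(t):=\frac{1-\cos\pi t}{t^2}$, $\Psi(t):=\frac{1-\cos 2\pi t}{t^2}$, and where one repeatedly uses the identity $\Psi=4\phi-2\psi_1^2$.

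Next I would exploit the monotonicity of these auxiliary functions on $(0,1/2]$. One checks: $\psi_1$ is increasing (its derivative has the sign of $\pi t\sin\pi t-(1-\cos\pi t)$, which vanishes at $0$ and has nonnegative derivative $\pi t\cos\pi t$); $\phi$ is decreasing (indeed $\phi(t)=\frac{\pi^2}{2}\bigl(\frac{\sin(\pi t/2)}{\pi t/2}\bigr)^2$); and — the one slightly delicate point — $\phi$ is \emph{concave} on $(0,1/2]$, since the numerator of $\phi''$ is $x^2\cos x-4x\sin x+6(1-\cos x)$ with $x=\pi t\in(0,\pi/2]$, which vanishes to second order at $0$ and has second derivative $-x^2\cos x\le 0$. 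From $\Psi=4\phi-2\psi_1^2$ and $\psi_1(a)\le\psi_1(c)$ one gets $\Psi(a)-\Psi(c)\ge 4\bigl(\phi(a)-\phi(c)\bigr)$, and from concavity of $\phi$ on $[a,c]$ one gets $\phi(a)-\phi(c)\ge|\phi'(a)|\,(c-a)$; hence it suffices to prove the stronger, $c$–simplified inequality
\[
\psi_1(a)^2(c-a)\ \le\ 2c(3c-2a)\,|\phi'(a)|.
\]

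To eliminate $c$, observe that for fixed $a$ the difference $Q(c):=2c(3c-2a)|\phi'(a)|-\psi_1(a)^2(c-a)$ is a convex quadratic in $c$ with $Q(a)=2a^2|\phi'(a)|\ge 0$ and $Q'(a)=8a|\phi'(a)|-\psi_1(a)^2$; so if $Q'(a)\ge 0$ then $Q$ is nondecreasing on $[a,\infty)$ and $Q\ge 0$ for all $c\in[a,1/2]$, which is exactly what is needed. Finally, since $|\phi'(a)|=\frac{2(1-\cos\pi a)-\pi a\sin\pi a}{a^3}$ — the numerator being $\ge 0$ because its derivative is $\sin x-x\cos x\ge 0$ — the condition $Q'(a)\ge 0$ is equivalent to $16(1-\cos\pi a)-(1-\cos\pi a)^2\ge 8\pi a\sin\pi a$, i.e. to $g(x):=15-14\cos x-\cos^2 x-8x\sin x\ge 0$ for $x=\pi a\in[0,\pi/2]$; this is elementary, since $g(0)=g'(0)=g''(0)=0$ while $g'''(x)=2\sin x\,(5-4\cos x)+8x\cos x\ge 0$ on $[0,\pi/2]$.

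The real content lies in two places. First, in the reduction above one must notice that the original inequality is an \emph{equality} both at $a=0$ and at $a=c$; consequently no step may replace $1-\cos$ by its quadratic Taylor part, and one has to keep the exact divided differences $\Psi(a)-\Psi(c)$ and $\phi(a)-\phi(c)$ intact — every cruder estimate fails near $a=c$. Second, and this I expect to be the main obstacle, one needs the concavity of $\phi$ on $(0,1/2]$ — equivalently of $y\mapsto(\sin y/y)^2$ on $(0,\pi/4]$ — because it is precisely this concavity that converts the divided difference into the tractable scalar condition $Q'(a)\ge 0$ treated at the end.
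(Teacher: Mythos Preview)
Your argument is correct and takes a genuinely different route from the paper. The paper substitutes $c=a+x$, bounds $\cos 2\pi c=\cos(2\pi a+2\pi x)$ from below by its second-order Taylor polynomial about $2\pi a$, and is then left with a quadratic $B_2x^2+B_1x+B_0$ in $x$ whose three coefficients (explicit functions of $a$ alone) are shown to be nonpositive on $[0,1/4]$ via standard polynomial bounds on $\sin$ and $\cos$. Your approach is more structural: you rewrite the cleared inequality as $2\psi_1(a)^2(c-a)^2\le c(3c-2a)[\Psi(a)-\Psi(c)]$ via the identity $\Psi=4\phi-2\psi_1^2$, then exploit the monotonicity of $\psi_1$ and the concavity of $\phi(t)=(1-\cos\pi t)/t^2$ on $(0,1/2]$ to replace the divided difference $\Psi(a)-\Psi(c)$ by $4|\phi'(a)|(c-a)$. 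The remaining quadratic-in-$c$ argument and the final one-variable check $g(x)\ge 0$ via $g'''(x)=2\sin x(5-4\cos x)+8x\cos x\ge 0$ are clean. What your approach buys is a clearer explanation of \emph{why} the inequality holds (it traces everything to the concavity of $\phi$), and it works on the full range $a\in[0,1/2]$; what the paper's approach buys is that it avoids identifying the auxiliary functions and proceeds by a single Taylor bound, at the cost of three separate coefficient estimates. One small inaccuracy in your write-up: the \emph{original} inequality is not an equality at $a=c$ (there $G\to c\pi\sin 2\pi c\neq 1-\cos 2\pi c$); it is only your \emph{reduced} inequality that becomes $0\le 0$ there, which is what actually matters for your ``no crude Taylor'' remark.
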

\begin{proof}
Let us denote $G_1(a,c)=G(a,c)-\frac{1-\cos 2\pi c}{c} \,a$. We want to prove that $G_1(a,c)\leq 0$.
We set $c=a+x$ and we use the inequality (coming from a simple Taylor expansion)
\begin{equation}\label{lem1a}
\cos(2\pi a+2\pi x)\geq \cos 2\pi a-2\pi x\sin 2\pi a-2\pi^2 x^2 \cos 2\pi a.
\end{equation}
In the sequel, we introduce $C=\cos 2\pi a$ and $S=\sin 2\pi a$. Expanding $G_1(a,c)$ and using \eqref{lem1a}, we get
$$
\begin{array}{l}
G_1(a,c)\leq (a+2x)(\pi S+\pi^2 xC)-\frac{2x}{a+2x}\left(\cos \pi a-C+2\pi xS+2\pi^2 x^2 C\right) \ldots \\
-\frac{a}{a+x} \left(1-C+2\pi xS+2\pi^2 x^2 C\right).
\end{array}$$
Reducing at the same denominator finally provides (the terms in $x^4$ and $x^3$ cancel) 
$$G_1(a,c)\leq \frac{1}{(a+x)(a+2x)}\left(B_2x^2+B_1 x+B_0\right)$$
where $B_0,B_1,B_2$ are given by
$$B_0=\pi a \sin 2\pi a -(1-\cos 2\pi a),\quad B_2=3\pi^2 a^2 \cos 2\pi a-2(\cos \pi a-\cos 2\pi a)$$
and
$$B_1=\pi^2 a^3 \cos 2\pi a +3\pi a^2\sin 2\pi a -2a(1+\cos \pi a-2\cos 2\pi a).$$
It is immediate to check that $B_0$ can be written as $B_0=(\sin 2\pi a)(\pi a-\tan \pi a)$, thus it is negative for $0<a<1/2$.\\
For $B_1$ and $B_2$ we just use the classical inequalities
$$\sin u\leq u-\frac{u^3}{6}+\frac{u^5}{120},\quad 1-\frac{u^2}{2} \leq  \cos u\leq 1-\frac{u^2}{2} +\frac{u^4}{24}$$
that give
$$B_2\leq \pi^4 a^4 \left(2\pi^2 a^2-\frac{14}{3}\right)$$
showing that $B_2\leq 0$ when $a\in [0,1/4]$. In the same way
$$B_1\leq \pi^4 a^5 \left(\frac{22}{15} \pi^2 a^2 -\frac{10}{3}\right)$$
and $B_1\leq 0$ for $a\in [0,1/4]$. This proves the lemma.
\end{proof}
Using Lemma \ref{lemmaG} and standard estimates such as $\sin 2\pi a \geq a (\sin 2\pi a_0)/a_0$, 
we obtain
$$I_2(a,c) \leq \frac{\sqrt{3}}{8\pi^2}\left\lbrace a\left(\frac{1-\cos 2\pi c}{c} +\pi^2 c\right) -a^2\left(\pi^2+\frac{\pi}{a_0} \sin 2\pi a_0\right) \right\rbrace.$$
It follows:
\begin{equation}\label{boundDac}
16\pi^2 D(a,c) \geq 6\sqrt{3}\pi^2\left(1-a(5c+\frac{2}{\pi^2}\frac{1-\cos 2\pi c}{c})+a^2(\frac{2\sin 2\pi a_0}{\pi a_0} -4)\right).
\end{equation}
Therefore we finally get $F(a,c)\leq 6\sqrt{3}(f_1(c) a +f_2(a_0,c) a^2)$ with
\begin{equation}\label{f1}
f_1(c)=2\left(1+\frac{1}{c}\right)(1-\cos 2\pi c) -\pi^2(4c^2-3c+2)\,,
\end{equation}
and
\begin{equation}\label{f2}
\begin{array}{l}
f_2(a_0,c)=-2\pi \frac{\sin 2\pi a_0}{a_0} +(1-\cos 2\pi c)\left[\frac{3-4a_0}{(1-a_0)(1-2a_0)} -4(1+c)\right] \ldots \\
+\pi^2\left[5+12c+9c^2+8c^3 +2\max\left((1-c)^2,\frac{(1-a_0-c)^2}{(1-a_0)(1-2a_0)}\right)\right].
\end{array}
\end{equation}
It is immediate to check that $f_1(c)<0$ for all $c\in [0,1/2]$ (by estimating $1-\cos u$ by $u^2/2 - u^4/24 + u^6/720$).
Let us now fix $a_0=1/60$. If we can prove that 
$$a\leq \frac{1}{60} \leq -\frac{f_1(c)}{f_2(\frac{1}{60},c)} \quad \Longleftrightarrow f_1(c) + \frac{1}{60}\,f_2(\frac{1}{60},c) \leq 0$$
for any $c\in [0,1/2]$ we will prove the first part of Lemma \ref{smalla}.\\
When $a_0=1/60$, the maximum $M$ appearing in $f_2$ is given by
$$M(\frac{1}{60},c)=\left\lbrace\begin{array}{lc}
(1-c)^2 & \mbox{when } c\geq c_0\\
\frac{(59-60c)^2}{3422} & \mbox{when } c\leq c_0\\
\end{array}\right.$$
where $c_0=(118 -  \sqrt{3422})/178 \simeq 0.3342$. 

Now we use different polynomial estimates for $1-\cos 2\pi c$, according to $c\leq c_0$ or $c\geq c_0$. These estimates allow us to
bound $ f_1(c) + \frac{1}{60}\,f_2(\frac{1}{60},c)$ by an explicit polynomial expression (of degree 5 or 6) for which it is possible to conclude.
More precisely\begin{itemize}
\item for small values of $c$ ($c\leq c_0$), we use the bound 
$$1-\cos u \leq \frac{u^2}{2} - 0.03 u^4 \quad \mbox{valid for $u\in [0,\pi]$}$$
that provides the inequality
$$ f_1(c) + \frac{1}{60}\,f_2(\frac{1}{60},c) \leq \pi^2P_5(c)$$
where $P_5(c)\simeq 0.31582 c^5 -9.40264 c^4  -9.47482 c^3 + 0.15459 c^2 + 7.131 c -1.9493$
and we can check that $P_5(c) < 0$ when $c\in [0,c_0]$.
\item when $u\in [2\pi/3,\pi]$ (this will work for $u=2\pi c$ and $c\in [c_0,1/2]$) we use the following estimate of $1-\cos u$
inspired by the Remes algorithm that provides the best uniform approximation of a function:
$$1-\cos u \leq \alpha_0 +\alpha_1 u+\alpha_2 u^2 +\alpha_3 u^3 +\alpha_4 u^4$$
with 
$$\left\lbrace
\begin{array}{l}
\alpha_0=0.7280333 \\
\alpha_1=-1.46638984\\
\alpha_2=1.67079499\\
\alpha_3=-0.45379449\\
\alpha_4=0.03551166\\
\end{array}\right.$$
It follows that 
$$ f_1(c) + \frac{1}{60}\,f_2(\frac{1}{60},c) \leq \frac{\pi^2}{c}P_6(c)$$
with $P_6(c)\simeq -0.37385 c^6 + 11.89046 c^5 -11.7331 c^4 -13.30008 c^3  + 14.64193 c^2 -3.70387 c +0.14753$
and we can check that $P_6(c) < 0$ when $c\in [c_0,1/2]$.
\end{itemize}
Therefore, we have proved that $F(a,c)\leq 0$ for all $a\leq 1/60$ and for all $c\in [0,1/2]$.

\medskip
Now, let us consider $a_0=0.16=4/25$ and use the inequality $1-\cos 2\pi c \leq 2\pi^2 c^2$. The quantity $f_1+\frac{4}{25} f_2$ is estimated by
$$f_1(c)+\frac{4}{25} f_2(\frac{4}{25},c) \leq \frac{\pi^2}{425} \left\lbrace \frac{18228}{21 } c^2 +3391  c - 342 - \frac{850}{\pi} \sin\left(\frac{8\pi}{25}\right) \right\rbrace.$$
Then it follows immediately that $f_1+\frac{4}{25} f_2 \leq 0$ as soon as $c\in [0,0.16]$ yielding the last part of Lemma \ref{smalla}.
\end{proof}

\section{Proof of Lemma \ref{estimategrad}}

\begin{proof}
In this proof we will give an estimate of the partial derivatives of the function $F$ introduced in \eqref{defF}, namely
$$
F(a,c)=\big(\widetilde{P}(a,c)\big)^2 N(a,c) - 16\pi^2 D(a,c),
$$
in the three regimes $\mathcal Z_I$, $\mathcal Z_{II}$, and $\mathcal Z_{III}$ introduced in \eqref{defZi}, corresponding to
\begin{itemize}
\item[-] region $\mathcal Z_{I}$: $k=0$, $a\in [1/60, 0.06]$, $c\in [0.16, 0.5]$;
\item[-] region $\mathcal Z_{II}$: $k=0.06$, $a\in [0.06,0.12]$, $c\in [0.16, 0.5]$;
\item[-] region $\mathcal Z_{III}$: $k=0.12$, $a\in [0.12, 0.25]$, $c\in [\max(a,0.16), 0.5]$.
\end{itemize}
For brevity, in the following we will denote by $a_{min}$, $a_{max}$, $c_{min}$, and $c_{max}$ the minimal and maximal values of $a$ and $c$ in the considered range.

We start the analysis from the perimeter term $\widetilde{P}$, which does not depend on $k$. By set inclusion, we know that
$$
\widetilde{P}(a,c) \leq P(\Omega_a)=3 (1-a)  \leq 3 (1-a_{min}).
$$
Similarly, denoting by $R$ the decreasing function $R(a):=\sqrt{1-3a+3a^2}$, we have
$$
\widetilde{P}(a,c)  \geq P(H_a)=3R(a)  \geq 3 R(a_{max}).
$$
Using the expression \eqref{ptildeexp} of $\widetilde{P}(a,c)$, we infer that its partial derivatives are
\begin{align*}
\frac{\partial \widetilde P}{\partial a} &
= - \frac{3\big( 2R(a) - 1 \big)}{R(a) (1-2a)^2 } \left[ R(a) (1-2a) +   c-a \right],
\\
\frac{\partial \widetilde P}{\partial c} &= - \frac{6}{1-2a}\left[ 1-a -R(a)\right].
\end{align*}
It is easy to prove that, for $a$ and $c$ in the considered range, the former derivative is negative, decreasing in $c$ and increasing in $a$, whereas the latter is negative, decreasing in $a$ (and independent of $c$). Therefore
\begin{align*}
&\frac{\partial \widetilde P}{\partial a}(a,c)  \leq \frac{\partial \widetilde P}{\partial a} (a_{max}, c_{min}), \quad \frac{\partial \widetilde P}{\partial a}(a,c)  \geq \frac{\partial \widetilde P}{\partial a} (a_{min}, c_{max}),
\\
&\frac{\partial \widetilde P}{\partial c}(a,c)  \leq \frac{\partial \widetilde P}{\partial c} (a_{min}), \ \quad  \quad \quad  \frac{\partial \widetilde P}{\partial c}(a,c)  \geq \frac{\partial \widetilde P}{\partial c} (a_{max}).
\end{align*}
Let us now consider the numerator $N$. First we notice that in view of \eqref{trick2}, we may write
$$
\int_{H_a} |\nabla v_k|^2 \, \mathrm{d}x\mathrm{d}y = \int_{\Omega_a} |\nabla v_k|^2  \, \mathrm{d}x\mathrm{d}y- 2 \int_{ABC}V \, \mathrm{d}x\mathrm{d}y.
$$
This allows us to split $N$ as follows:
$$
N(a,c)=\underbrace{\int_{\Omega_a} |\nabla v_k|^2 \, \mathrm{d}x\mathrm{d}y}_{\hat{N}(a)}- 2 \underbrace{\int_{Q_1(c)BQ_2(c)} V(x,y)\, \mathrm{d}x\mathrm{d}y}_{I_N(a,c)}.
$$
We start with the analysis of $\hat{N}$: it is clearly positive and, by the inclusion $\Omega_{1/2}\subset \Omega_a \subset \Omega_0$, it satisfies
\begin{equation}\label{Nhatbdd}
\hat{N}(1/2) \leq \hat{N}(a) \leq \hat{N}(0).
\end{equation}
Recalling that $v_k=(1-k)u_1 + k \hat{u}_1$ and using formulas in Lemma \ref{lem-F12} and in Remark \ref{remT}, we obtain
$$
\hat{N}(a)= (1-k)^2 \left[ \frac{2}{\sqrt{3}}\pi^2 - F_2(a)  \right] + k^2 \left[\frac{8}{\sqrt{3}}\pi^2 - F_4(a)\right] + 2k (1-k) \left[ - F_6(a)\right].
$$
By combining this formula with the inequalities \eqref{Nhatbdd} we obtain
\begin{align*}
\hat{N}(a) &\leq \frac{2\pi^2}{\sqrt{3}} (5k^2-2k+1),
\\
\hat{N}(a) &\geq  \frac{1}{2\sqrt{3}} \left[ k^2 (5 \pi^2 - 28) + k (-2\pi^2 + 24) + \pi^2 +4\right].
\end{align*}
The function $\hat{N}$ does not depend on $c$, thus we only have to compute the derivative with respect to $a$, denoted for brevity by $N'$:
$$
\hat{N}'(a)= - \left[(1-k)^2 F_2'(a) + k^2 F_4'(a) + 2k (1-k) F_6'(a)\right].
$$
In the case $\mathcal Z_{I}$, namely when $k=0$, it is immediate to check that $\hat{N}'$ is negative decreasing, since
\begin{align*}
F_2'(a) = &  \frac{4 \pi \sqrt{3}}{3} [\pi a (1-\cos(2 \pi a )) + (2\pi a - \sin(2\pi a))] \geq 0, 
\\
 F_2''(a) =  & \frac{4 \pi^2 \sqrt{3}}{3} [ 2  \pi a \sin (2 \pi a ) + 3 (1 - \cos(2\pi a))]\geq 0.
\end{align*}
The same holds true for $k\neq 0$, but the proof is more delicate. We first notice that since
\begin{align*}
F_4'(a) &= \frac{4 \pi \sqrt{3}}{3} [4\pi a(1-\cos(4\pi a)) + 2(4\pi a - \sin(4 \pi a))], 
\\
F_6'(a) &= - \frac{4\pi \sqrt{3}}{3}[3\sin(2\pi a) (1-\cos(2\pi a)) + \cos(2\pi a) (2 \pi a - \sin(2 \pi a))],
\end{align*}
thus $\hat{N}'(0)=0$. Computing the second derivatives of $F_4$ and $F_6$
\begin{align*}
F_4''(a) &= \frac{4 \pi^2 \sqrt{3}}{3} [12 (1-\cos(4\pi a)) + 16 \pi a \sin(4 \pi a) ]
\\
F_6''(a) &= - \frac{4\pi^2 \sqrt{3}}{3}[
8 \cos(2 \pi a) (1-\cos(2 \pi a)) - 4 \sin(2 \pi a) ( \pi a - 2 \sin (2 \pi a))],
\end{align*}
we get
\begin{align*}
\hat{N}''(a)=& -[(1-k)^2 F_2''(a) + k^2F_4''(a) + 2k(1-k)F_6''(a)]
\\
= &   - \frac{4 \pi^2 \sqrt{3}}{3} \left\{
 2 (1-k)(1+3k)\pi a \sin (2 \pi a ) \right.
\\
& + (1-k) \big(3-  3 k  - 16 k \cos(2 \pi a) \big) \big(1 - \cos(2\pi a)\big)
\\
&  \left.   - 8 k (2-5 k) \sin^2(2\pi a)  +  16 k^2  \pi a \sin(4 \pi a) \right\}.
\end{align*}
Using 
$$
3-  3 k  - 16 k \cos(2 \pi a) \geq  3-19 k, \quad \sin(4\pi a) \geq 0,
$$
we get
\begin{align*}
\hat{N}''(a) \leq &   - \frac{4 \pi^2 \sqrt{3}}{3} \left\{
  2 (1-k)(1+3k)\pi a \sin (2 \pi a ) + (1-k)(3-19 k) \big(1 - \cos(2\pi a)\big) \right.
\\
&  \left.  - 8k(2-5k) \sin^2(2\pi a)  \right\}.
\end{align*}
Splitting the coefficient of $\sin^2(2\pi a)$ as the sum of $7k(2-5k)$ and $k (2-5k)$ and noticing that, for $k=0.06, 0.12$, there hold
$$
2 (1-k)(1+3k)\pi a - 7 k(2-5k) \sin(2\pi a)  \geq   2 \pi a [ (1-k)(1+3k) - 7 k (2-5k)  ] \geq  0
$$
and
\begin{align*}
 & (1-k)(3-19 k)(1-\cos(2 \pi a)) - k(2-5k)\sin^2(2 \pi a) 
\\ & = (1-\cos( 2 \pi a)) [(1-k)(3-19 k) - k(2-5k)(1+\cos(2 \pi a))]
\\ & \geq (1-\cos( 2 \pi a)) [ (1-k)(3-19 k) -2 k(2-5k)]\geq 0,
\end{align*}
we deduce that $\hat{N}''(a) \leq 0$. All in all, we conclude that $\hat{N}'$ is decreasing and, since it is zero at $0$, it is negative. Thus we get the estimates
$$
\hat{N}'(a_{max}) \leq \hat{N}'(a) \leq \hat{N}'(a_{min}).
$$
The same strategy applies to the denominator: we write
$$
D(a,c)=\underbrace{\int_{\Omega_a} v_k^2\, \mathrm{d}x\mathrm{d}y}_{\hat{D}(a)} - 2 \underbrace{\int_{ABQ_2(c)}U(x,y)\,\mathrm{d}x \mathrm{d}y}_{I_D(a,c)}.
$$
Recalling that $v_k= (1-k)u_1 + k \hat{u}_1$ and using the formulas in Lemma \ref{lem-F12} and Remark \ref{remT}, we obtain
$$
\hat{D}(a)= (1-k)^2 \left[ \frac{3\sqrt{3}}{8} -F_1(a)  \right]  + k^2 \left[ \frac{3\sqrt{3}}{8} - F_3(a)\right] + 2k(1-k) \left[-F_5(a) \right].
$$
Therefore
$$
\hat{D}'(a)= - \left[(1-k)^2 F_1'(a) + k^2 F_3'(a) + 2k (1-k) F_5'(a)\right].
$$
We claim that $\hat{D}'(a)$ is decreasing, so that
$$
\hat{D}'(a_{max}) \leq \hat{D}'(a) \leq \hat{D}'(a_{min}).
$$
To prove the claim, we first show that $\hat{D}'(0)=0$ and then we prove that $\hat{D}''\leq 0$. The first property comes from
\begin{align*}
F_1'(a)=& \frac{3\sqrt{3}}{8 \pi}[ 6 \pi  a  + 4 \sin(2 \pi a)  + 4 \pi a \cos(2 \pi a)] ,
\\
F_3'(a)= &  \frac{3\sqrt{3}}{8 \pi}[6 \pi a +  2 \sin(4 \pi a)  + 4 \pi a \cos( 4 \pi a)] ,
\\
F_5'(a)= &\frac{3\sqrt{3}}{8 \pi}[ -  3 \sin(2 \pi a) - 2 \sin (4 \pi a )  - 4 \pi a \cos( 2 \pi a) ].
\end{align*}
Differentiating $\hat{D}'$ and using
\begin{align*}
F_1''(a)=& \frac{3\sqrt{3}}{4}[ 3   + 6  \cos(2 \pi a)  - 4 \pi a \sin(2 \pi a)] ,
\\
F_3''(a)= &  \frac{3\sqrt{3}}{4}[3 + 6  \cos(4 \pi a)  - 8  \pi a \sin( 4 \pi a)] ,
\\
F_5''(a)=& \frac{3\sqrt{3}}{4}[ -  5 \cos(2 \pi a) - 4   \cos (4 \pi a )  + 4 \pi a \sin( 2 \pi a) ],
\end{align*}
we obtain
\begin{align*}
\hat{D}''(a)=& - \left[(1-k)^2 F_1''(a) + k^2 F_3''(a) + 2k (1-k) F_5''(a)\right]
\\ =
& - \frac{3\sqrt{3}}{4} \left\{ (1-k)[ 3 - 11 k - (2-2k + 0.5(1-5k)) \pi a \sin(2 \pi a)
]  \right.
\\ &
+ k^2 [  9 - 8  \pi a \sin( 4 \pi a)] + (1-k) (6 - 16 k)  \cos(2 \pi a)
\\
&\left.
+ \sin(2\pi a) [ 4 k (4 - 7k) \sin(2\pi a)   -  \pi a  1.5 (1-k) (1- 5 k) ]
\right\}\leq 0.
\end{align*}
To get the second inequality we have simply rearranged in a convenient way the terms. The last inequality is immediate for $k=0$ (actually in that case $D''= - F_1''\leq 0$ for $a\in [1/60,0.06]$), whereas for $k=0.06$ and $k=0.12$ it follows from the positivity of each term in the expression between braces, for $a$ taken in the proper interval:
\begin{align*}
\bullet &\ 3 - 11 k - (2.5 - 4.5 k)  \pi a \sin(2 \pi a) \geq  3 - 11 k -  (2.5 - 4.5 k) \pi a >0;
\\ 
\bullet &\  9  - 8  \pi a \sin( 4 \pi a) \geq  9 - 8 \pi a >0;
\\
\bullet & \ (1-k)(6 - 16k)  \cos(2 \pi a)>0;
\\
\bullet &\  4 k (4 - 7k) \sin(2\pi a) - \pi a  1.5 (1-k) (1- 5 k) \geq 0.
\end{align*}
This concludes the claim about $\hat{D}'$. 
We conclude the estimates by treating the integral terms $I_N$ and $I_D$. Let us recall their definition:
\begin{eqnarray}
I_N(a,c)= &  \int_{T_N} V(x,y)\, \mathrm{d}x \mathrm{d}y,
\\
I_D(a,c)= &  \int_{T_D} U(x,y) \, \mathrm{d}x \mathrm{d}y,
\end{eqnarray}
where, for brevity,  $T_N$ denotes the triangle $Q_1(c)BQ_2(c)$ and $T_D$ the triangle $ABQ_2(c)$.
For the benefit of the reader, we also recall here the expressions of the integrands:
\begin{eqnarray}
V(x,y)= & \frac{8}{3}\pi^2[(1-k)^2(3-\varphi) + 4 k^2 (3-\eta) + 4 k (1-k) (\psi - \varphi)] , \label{defN}
\\
U(x,y)= &  \frac{3}{2} [(1-k)^2(3+2\varphi) + k^2 (3+2\eta) + 2 k (1-k) (-\psi - 2\varphi)] . \label{defD}
\end{eqnarray}
The dependence on $a$ and $c$ of $I_N$ and $I_D$ is encoded in the domains of integration:
\begin{eqnarray}
T_N& =\left\{ y_1(c) \leq y \leq y_2(a), \ x_N(a,c,y) \leq x \leq x_2(y) \right\},\label{TN}
\\
T_D& =\left\{ y_1(c) \leq y \leq y_2(a), \ x_D(a,c,y) \leq x \leq x_2(y) \right\},\label{TD}
\end{eqnarray}
with
\begin{align*}
& y_1(c):=  \frac{\sqrt{3}}{2}(1-c), \quad y_2(a):=\frac{\sqrt{3}}{2}(1-a), \quad x_2(y):= - \frac{y}{\sqrt{3}} + \frac12,
\\
& x_N(a,c,y):=\frac{1}{1-2a} \left(-\frac{y}{\sqrt{3}} +  \frac12 - ac\right), \quad x_D(a,c,y):=\frac{c}{c-a} \left(-\frac{y}{\sqrt{3}} +  \frac12 - \frac{a}{2}\right).
\end{align*}
We start by bounding $I_N$: using the positivity of the integrand, we get $0 \leq I_N(a,c) \leq \|V\|_\infty |T_N|$. On the one hand, since $|\varphi|, \, |\psi|,\, |\eta| \leq 3$, we easily get $\|V\|_\infty \leq  16\pi^2 (1+ k)^2$. On the other hand, the area of $T_N$ is bounded above by the area of the triangle $ABC$, which is maximal for $a=a_{max}$. All in all, we obtain
$$
0 \leq I_N \leq  \sqrt{3}\pi^2 (1+k)^2  a_{max} (1-2a_{max}).
$$
In order to compute the partial derivatives of $I_N$, we write the integral as follows:
$$
I_N(a,c)= \int_{y_1(c)}^{y_2(a)} \int_{x_N(a,c,y)}^{x_2(y)} V (x,y)\, \mathrm{d}x\mathrm{d}y.
$$
Differentiating in $a$ and $c$, we obtain
$$
\frac{\partial I_N}{\partial a}(a,c)= H_1 + H_2, \quad\frac{\partial I_N}{\partial c}(a,c)= H_3,
$$
with
\begin{align*}
&H_1:={y_2'(a)} \int_{x_N(a,c,y_2(a) )}^{x_2(y_2(a) )} V(x,y_2(a))\, \mathrm{d}x,
\\
& H_2:=-  \int_{y_1(c)}^{y_2(a)}  V (x_N(a,c,y),y) \frac{\partial x_N}{\partial a}(a,c,y)\,\mathrm{d}y,
\\
& H_3:= - \int_{y_1(c)}^{y_2(a)} V (x_N(a,c,y),y) \frac{\partial x_N}{\partial c}(a,c,y)\,\mathrm{d}y.
\end{align*}
For the term $H_3$ we have used $x_2(a,c,y_1(c))=x_N(a,c,y_1(c))$.

We notice that $H_1$ is the product of a negative coefficient, $y_2'(a)$, and an integral of a positive function, $V$, over a segment of length $x_2(y_2(a))-x_N(a,c,y_2(a))=a (c-a)/(1-2a)$. Bounding $\|V\|_\infty$ as above and optimizing in $c$ and $a$, we get
$$
-  4 \sqrt{3} \pi^2 (1+k)^2 a_{max} \leq H_1 \leq 0.
$$
Since $\partial x_N/\partial a \leq 0 $  for $y\in [y_1(c), y_2(a)]$ and $V\geq 0$, we infer that $H_2\geq 0$. 
In order to obtain an upper bound of $H_2$, we perform the change of variable
\begin{equation}\label{chvar}
y=y(t) := \frac{\sqrt{3}}{2}(1-c) + \frac{\sqrt{3}}{2} t, 
\end{equation}
which allows to re-parametrize the interval of integration $[y_1(c), y_2(a)]$ as the interval $[0, c-a]$. Notice that, as a function of $t$, $|\partial x_N /\partial a|$ simply reads $t/(1-2a)^2$. Thus we obtain
$$
\int_{y_1(c)}^{y_2(a)} \left| \frac{\partial x_N}{\partial a}(a,c,y)  \right|\, \mathrm{d}y = 
 \frac{\sqrt{3}}{4} \frac{(c-a)^2}{(1-2a)^2},
$$
which allows us to obtain
$$
0 \leq H_2 \leq 4 \sqrt{3} \pi^2(1+k)^2 \frac{(c_{max}-a_{min})^2}{(1-2a_{min})^2}.
$$
Similarly, by direct computation we have
$$
\int_{y_1(c)}^{y_2(a)} \left| \frac{\partial x_N}{\partial c}(a,c,y)  \right|\, \mathrm{d}y = \frac{\sqrt{3}}{2} \frac{a (c-a)}{1-2a},
$$
so that
$$
0 \leq H_3 \leq 
8 \sqrt{3} \pi^2 (1+k)^2 \frac{a(c-a)}{1-2a} \leq 
4 \sqrt{3} \pi^2 (1+k)^2  a_{max}.
$$
We now pass to the study of $I_D$. As already done for $I_N$, we write the integral as
\begin{align*}
I_D(a,c)= & \int_{y_1(c)}^{y_2(a)} \int_{x_D(a,c,y)}^{x_2(y)} U (x,y)\, \mathrm{d}x\mathrm{d}y.
\end{align*}
In this form, it is easy to compute the partial derivatives, which read
$$
\frac{\partial I_D}{\partial a}(a,c)= J_1 + J_2, \quad \frac{\partial I_D}{\partial c}(a,c)= J_3, 
$$
with
\begin{align*}
& J_1 := {y_2'(a)} \int_{x_D(a,c,y_2(a) )}^{x_2(y_2(a) )} U (x,y_2(a))\, \mathrm{d}x,
\\
& J_2 :=  - 
 \int_{y_1(c)}^{y_2(a)}  U (x_D(a,c,y),y) \frac{\partial x_D}{\partial a}(a,c,y)\, \mathrm{d}y,
\\
& J_3 :=  - 
 \int_{y_1(c)}^{y_2(a)} U (x_D(a,c,y),y) \frac{\partial x_D}{\partial c}(a,c,y)\, \mathrm{d}y.
\end{align*}
For $J_3$ we have used the fact that $x_2(y_1(c))=x_D(a,c,y_1(c))$. By combining the sign of the integrands
$$
U\geq 0, \quad y'(a)<0, \quad \frac{\partial x_D}{\partial a}\leq 0, \quad \frac{\partial x_D}{\partial c} \leq 0,
$$
true for $y \in [y_1(c), y_2(a)]$, we immediately obtain $J_1\leq0$, $J_2\geq0$, $J_3\geq0$. Let us refine these estimates, starting from $J_1$: by combining $y'(a)=-\sqrt{3}/2$, the fact that the segment of integration is $AB$ with length $a/2$, and the estimate $U(B) \leq U(x,y) \leq U(A)$ provided in Lemma \ref{lemmas}, we deduce that 
$$
 - \frac{\sqrt{3}}{4} a U(A)  \leq J_1 \leq  - \frac{\sqrt{3}}{4} a U(B). 
$$
Inserting the coordinates of $A$ and $B$ (see \eqref{ABC}) into the expression of $U$ (see \eqref{defD} and \eqref{fipsieta}), we obtain
\begin{align*}
U(A)=& \frac{3}{2}\{(1-k)^2 [3+2  \cos(2 \pi a)  + 4 \cos( \pi a) ] + 
k^2 [3+2 \cos(4\pi a) + 4 \cos(2 \pi a)]
\\
&
+ 2 k (1-k) [ -1 - 2\cos( 3 \pi a) - 2\cos(2 \pi a)  - 4 \cos(\pi a) ]\},
\\ U(B)=& \frac{3}{2} \{ (1-k)^2 [5+ 4\cos(2\pi a)] + k^2 [5+4\cos(4\pi a)] 
\\ & + 2k(1-k) [- 6 \cos(2\pi a) - \cos(4\pi a) - 2]
  \}.
\end{align*}
A direct computation shows that $a\mapsto aU(A)$ and $a\mapsto aU(B)$ are increasing, thus 
$$
  - \frac{\sqrt{3}}{4} a_{max} U(A) \leq J_1  \leq   - \frac{\sqrt{3}}{4} a_{min} U(B).
$$
For the terms $J_2$ and $J_3$, we argue as for $H_2$ and $H_3$. By performing the change of variables $y=y(t)$ described in \eqref{chvar}, the interval of integration becomes $[0, c-a]$, the partial derivatives in the integrand read
$$
\frac{\partial x_D}{\partial a} (a,c,y(t)) =   - \frac{c}{2(c-a)^2} t, \quad \frac{\partial x_D}{\partial c} (a,c,y(t))= - \frac{ a }{2(c-a)^2}( c-a-t),
$$
and $\mathrm{d}y= (\sqrt{3}/2) \mathrm{d} t$. This allows to write
\begin{align*}
\int_{y_1(c)}^{y_2(a)} \left| \frac{\partial x_D}{\partial a}(a,c,y)  \right|\, \mathrm{d}y = \frac{\sqrt{3}}{8} c,
\quad
 \int_{y_1(c)}^{y_2(a)}  \left| \frac{\partial x_D}{\partial c}(a,c,y)  \right| \, \mathrm{d}y= \frac{\sqrt{3}}{8} a. 
\end{align*}
Thus we get
$$
0 \leq J_2 \leq \frac{27 \sqrt{3}}{16}  c_{max},
\quad 
 0  \leq J_3 \leq \frac{27 \sqrt{3}}{16} a_{max}. 
$$
According to the definitions given in the proof, $F$ reads
$$
F(a,c)= \tilde{P}^2 (\hat{N} - 2 I_N) - 16\pi^2 (\hat{D} - 2 I_D)
$$
and its partial derivatives are
\begin{align*}
 \frac{\partial F}{\partial a} (a,c) = & [- 4 I_N \tilde{P}\partial_a \tilde{P} - 2 \tilde{P}^2H_1 - 16 \pi^2 \hat{D}' + 32 \pi^2 J_2] 
 \\ & + [2 \hat{N} \tilde{P} \partial_a \tilde{P} + \tilde{P}^2 \hat{N}' - 2\tilde{P}^2 H_2 + 32 \pi^2 J_1],
\\ 
 \frac{\partial F}{\partial c} (a,c) = &[- 4 I_N \tilde{P} \partial_c\tilde{P} +  32\pi^2 J_3] + [2 N \tilde{P} \partial_c \tilde{P} - 2 \tilde{P}^2 H_3],
\end{align*}
where we have separated, in square brackets, the positive and negative terms. By taking the proper values of $k$, $a_{min}$, $a_{max}$, $c_{min}$, $c_{max}$ in the three regions $\mathcal Z_{I}$, $\mathcal Z_{II}$, $\mathcal Z_{III}$, and finally passing to the absolute values, we find the desired bounds of the statement. This concludes the proof.
\end{proof}

\begin{remark} \label{remInt}
Let us write the exact formulas of the integrals of $\varphi, \psi , \eta$ defined in \eqref{fipsieta} in the triangles $T_N$ and $T_D$ defined in \eqref{TN} and \eqref{TD}, respectively. These expressions are used in the numerical computation of $F$.
\begin{align*}
\int_{T_N} \varphi= \frac{\sqrt{3}}{8\pi^2}& \left\{ -  \frac{c-a}{1-2a} 2 \pi a \sin(2\pi a)  
+\frac{1-2a}{1-a} \left[\cos(2\pi c) -  \cos\left( 2 \pi a \frac{1-a-c}{1-2a}\right) \right] 
\right.
\\ & +\left. \frac{1-a}{1-2a}\left[ \cos(2\pi a) - \cos (2 \pi c)\right] + \frac{1-2a}{a} \left[1 - \cos\left( 2 \pi a \frac{a-c}{1-2a}\right) \right]
\right\},
\\
\int_{T_N} \psi=\frac{\sqrt{3}}{ 8 \pi^2}& \left\{
\frac{1-2a}{1-3a}
 \left[\cos\left(2 \pi a \frac{1 - 3 a  +  c}{1- 2a}\right) 
- \cos(2\pi c)\right]\right.
\\ &
 + \frac{1-2a}{2-3a}
 \left[ \cos(4\pi c)- \cos \left(2 \pi a \frac{ 2 - 3 a  -  c}{1-2a}\right)\right]
\\&
- 
\left[
\frac{\cos(2 \pi a  )  - \cos(2 \pi c) }{2}  + \frac{\cos(4 \pi c ) - \cos(4 \pi a)}{2}
\right]
\\&
\left.+\frac{(1-2a)}{2}
\left[
\cos\left(2 \pi c \right) - \cos\left(2 \pi a \frac{1 - 2 c}{1-2a}\right)
\right]\right\},
\\
\int_{T_N} \eta=  \frac{\sqrt{3}}{32\pi^2} & \left\{
\frac{1-2a}{a(1-a)}
\left[1 - \cos \left(   4 \pi a  \frac{c-a}{1-2a}\right)\right]  \right.
\\ &
+\frac{1-2a}{ 1-a}\left[
- 1+  \cos(4\pi c)
+ \cos\left(4 \pi a \frac{c-a}{1-2a}\right)-  \cos\left(4 \pi a \frac{1-a-c}{1-2a}\right)
\right]
\\ &
\left.+\frac{1}{1-2a} [(1-a)\cos(4 \pi a ) - (1-a)\cos(4 \pi c) -   4 \pi a(c-a) \sin(4\pi a )  ]\right\},
\end{align*}
and
\begin{align*}
\int_{T_D} \varphi = \frac{\sqrt{3}}{8 \pi^2}&\left\{ -  \pi a \sin(2a\pi)
+ 2 (c-a)\left[ \frac{\cos(2\pi c) - \cos(\pi a)  }{ (2c-a)} + \frac{1 - \cos(\pi a)}{a} \right]\right.
\\ & 
\left. + \frac{(2c-a)}{2 (c-a)}[\cos(2 \pi a) - \cos(2\pi c)]
\right\},
\\
\int_{T_D} \psi= \frac{\sqrt{3}}{8\pi^2} & \left\{ \frac{ \cos(2 \pi  c ) - \cos(2 \pi    a )}{2} + \frac{ \cos(4  \pi  a) - \cos(4 \pi  c)}{2} - (c-a) 
 \frac{1-\cos(2 \pi c)}{2c}\right.
\\ &
\left. + 
\frac{2(c-a)}{3a-4c}
\left[
-  \cos(4\pi c) + \frac{2 c \cos(3  \pi a) 
+( 3a-4c)\cos( 2  \pi c )}{ (3a-2c)}
\right]\right\},
\\
\int_{T_D} \eta= 
 \frac{\sqrt{3}}{64 \pi^2} &\left\{ - 4 a  \pi  \sin(4 \pi  a)
- 4 \frac{c-a}{2c-a}  - \frac{a}{2c-a} \cos(4 \pi a) 
\right.
\\
& \left.
+ \frac{4c-3a}{2c-a}\frac{ c\cos(4 \pi a)  - a\cos(4\pi c) }{c-a}
+\frac{8c(c-a)}{2c-a}\frac{1-\cos(2\pi a)}{a} \right\}.
\end{align*}
\end{remark}

\bibliography{biblio}

\bibliographystyle{plain}

\bigskip

\noindent Antoine \textsc{Henrot}, Institut \'Elie Cartan de Lorraine, UMR 7502, Universit\'e de Lorraine CNRS, France, email: \texttt{antoine.henrot@univ-lorraine.fr} 

\medskip

\noindent Antoine \textsc{Lemenant}, Institut \'Elie Cartan de Lorraine, UMR 7502, Universit\'e de Lorraine CNRS, France, email: \texttt{antoine.lemenant@univ-lorraine.fr} 

\medskip

\noindent  Ilaria \textsc{Lucardesi}, Universit\`a degli Studi di Firenze, Dipartimento di Matematica e Informatica ``Ulisse Dini'', Italy, and formerly Institut \'Elie Cartan de Lorraine, UMR 7502, Universit\'e de Lorraine CNRS, France, email: \texttt{ilaria.lucardesi@unifi.it}

\end{document}